\numberwithin{equation}{section}
\newtheorem{theorem}{Theorem}[section]
\newtheorem{lemma}[theorem]{Lemma}
\newtheorem{prop}[theorem]{Proposition}
\newtheorem*{thmA}{Theorem A}
\theoremstyle{definition}
\newtheorem{remark}[theorem]{Remark}
\theoremstyle{definition}
\theoremstyle{definition}
\def\dashint{\operatorname%
{\,\,\text{\bf-}\kern-.98em\DOTSI\intop\ilimits@\!\!}}
\newcommand{\RN}[1]{%
  \textup{\uppercase\expandafter{\romannumeral#1}}%
}
\renewcommand{\epsilon}{\varepsilon}
\newcounter{marnote}
\begin{document}


\title[Lower Bounds of the Gradient Estimates]{Lower bounds of gradient's blow-up for the Lam\'{e} system with partially infinite coefficients}

\author[H.G. Li]{Haigang Li}
\address[H. Li]{School of Mathematical Sciences, Beijing Normal University, Laboratory of MathematiCs and Complex Systems, Ministry of Education, Beijing 100875, China.}
\email{hgli@bnu.edu.cn.}
\thanks{The author was partially supported by  NSFC (11571042, 11631002), Fok Ying Tung Education Foundation (151003).}




\maketitle

\begin{abstract}
In composite material, the stress may be arbitrarily large in the narrow region between two close-to-touching hard inclusions. The stress is represented by the gradient of a solution to the Lam\'{e} system of linear elasticity. The aim of this paper is to establish lower bounds of the gradients of solutions of the Lam\'{e} system with partially infinite coefficients as the distance between the surfaces of discontinuity of the coefficients of the system tends to zero. Combining it with the pointwise upper bounds obtained in our previous work, the optimality of  the blow-up rate of gradients is proved for inclusions with arbitrary shape in dimensions two and three. The key to show this is that we find a blow-up factor, a linear functional of the boundary data, to determine whether the blow-up will occur or not.
\end{abstract}

\section{Introduction and main results}

In high-contrast composites, it is a common phenomenon that high concentration of extreme mechanical loads occurs in the narrow regions between two adjacent inclusions. Extreme loads are always amplified by such microstructure, which will cause failure or fracture initiation. Stimulated by the well-known work of Babu\u{s}ka, et al. \cite{ba}, where the Lam\'{e} system of linear elasticity was assumed and the initiation of damage and fracture in composite materials was computationally analyzed, we consider the Lam\'{e} system with partially infinite coefficients to characterize high-contrast composites. The gradient of the solution exhibits singular behavior with respect to the distance between hard inclusions. This paper is a continuation of \cite{bll,bll2}, where a pointwise upper bound of the gradient of solution is established by an iteration technique with respect to the energy, as the distance (say, $\varepsilon$) between the surfaces of discontinuity of the coefficients of the system tends to zero. 

The main purpose of this paper is to show the blow-up rates obtained in \cite{bll,bll2} are actually optimal, by establishing the lower bounds on the gradients of solutions of the Lam\'{e} system with partially infinite coefficients in two physically relevant dimensions $d=2,3$. Namely, the optimal blow-up rates are, respectively, $\varepsilon^{-1/2}$ in dimension $d=2$, $(\varepsilon|\log\varepsilon|)^{-1}$  in dimension $d=3$. Usually, it is not easy to obtain a lower bound. The novelty of this paper is that we introduce a blow-up factor defined by a solution of the limit case when two inclusions touch each other, which is a linear functional of the boundary data to determine whether or not the blow-up to occur. Physically, this factor seems much natural. Here new difficulties need to be overcome, and a number of refined estimates are used in our proof. The introduced methodology allows us define an analogous blow-up factor for the perfect conductivity problem considered in \cite{bly1} and give a new and simple proof for the lower bound estimates.

There have been many works on the analogous question for the following scalar equation with Dirichlet boundary condition, called the conductivity problem, 
\begin{equation}\label{equk}
\begin{cases}
\nabla\cdot\Big(a_{k}(x)\nabla{u}_{k}\Big)=0&\mbox{in}~\Omega,\\
u_{k}=\varphi&\mbox{on}~\partial\Omega,
\end{cases}
\end{equation}
where $\Omega$ is a bounded open set of $\mathbb{R}^{d}$, $d\geq2$, containing two $\varepsilon$-apart convex inclusions $D_{1}$ and $D_{2}$, $\varphi\in{C}^{2}(\partial\Omega)$ is given, and
$$a_{k}(x)=
\begin{cases}
k\in(0,\infty)&\mbox{in}~D_{1}\cup{D}_{2},\\
1&\mbox{in}~\Omega\setminus\overline{D_{1}\cup{D}_{2}}.
\end{cases}
$$
For touching disks $D_{1}$ and $D_{2}$ in dimension $d=2$, Bonnetier and Vogelius \cite{bv} first proved that $|\nabla u_{k}|$ remains bounded. The bound depends on the value of $k$. Li and Vogelius \cite{lv} extended the result to general divergence form second order elliptic equations with piecewise smooth coefficients in all dimensions, and they proved that $|\nabla u|$ remains bounded as $\varepsilon\rightarrow0$. Li and Nirenberg \cite{ln} further extended  the results in \cite{lv} to general divergence form second order elliptic systems including systems of elasticity.

The estimates in \cite{ln} and \cite{lv} depend on the ellipticity of the coefficients. If ellipticity constants are allowed to deteriorate, the situation is quite different. When $k=\infty$, the $L^\infty$-norm of $|\nabla u_\infty|$ for the solutions $u_{\infty}$ of  (\ref{equk}) generally becomes unbounded as $\varepsilon$ tends to $0$.  The blow-up rate of $|\nabla u_\infty|$ is respectively
 $\varepsilon^{-1/2}$ in dimension $d=2$,
$(\varepsilon|\log\varepsilon|)^{-1}$ in
dimension $d=3$,  and  $\varepsilon^{-1}$  in dimension $d\ge 4$. See
Bao, Li and Yin \cite{bly1}, as well as Budiansky and Carrier \cite{bc},
  Markenscoff \cite{m}, Ammari, Kang and Lim \cite{akl},
 Ammari, Kang, Lee, Lee and Lim \cite{aklll} and Yun  \cite{y1,y2}.
Further, more detailed, characterizations of the singular behavior of $\nabla{u}_{\infty}$ have been obtained by Ammari, Ciraolo, Kang, Lee and Yun \cite{ackly}, Ammari, Kang, Lee, Lim and Zribi \cite{AKLLZ}, Bonnetier and Triki \cite{bt0, bt} and
 Kang, Lim and Yun \cite{kly, kly2}. For related works, see \cite{abtv, adkl, agkl, akkl, bjl,bly2, bt, bcn, dong, dl, dongzhang, g, gb, gn, jlx, kleey, keller1, keller2, Li-Li, llby, lx, ly, ly2,mnp,y3} and the references therein.

We follow the notations of \cite{bll,bll2}. Let $\Omega\subset\mathbb{R}^{d}$ be a bounded open set with $C^{2}$ boundary, and $D_{1}$ and $D_{2}$ are two disjoint convex open sets in $\Omega$ with
$C^{2,\gamma}$ boundaries, $0<\gamma<1$, which are $\varepsilon$ apart and far away from $\partial{\Omega}$, that is,
\begin{equation}\label{omega}
\begin{array}{l}
\displaystyle \overline{D}_{1},\overline{D}_{2}\subset\Omega,\quad\mbox{the principle curvatures of }\partial{D}_{1},\partial{D}_{2}\geq\kappa_{0}>0,\\
\displaystyle \varepsilon:=\mathrm{dist}(D_{1},D_{2})>0,\quad \mathrm{dist}(D_{1}\cup{D}_{2},\partial{\Omega})>\kappa_{1}>0,
\end{array}
\end{equation}
where $\kappa_{0},\kappa_{1}$ are constants independent of $\varepsilon$. We also assume that the $C^{2,\gamma}$ norms of $\partial{D}_{i}$ are bounded by some constant independent of $\varepsilon$. This implies that each $D_{i}$ contains a ball of radius $r_{0}^{*}$ for some constant $r_{0}^{*}>0$ independent of $\varepsilon$. Denote $$\widetilde{\Omega}:=\Omega\setminus\overline{D_{1}\cup{D}_{2}}.$$
Assume that $\widetilde{\Omega}$ and $D_{1}\cup{D}_{2}$ are occupied, respectively, by two different isotropic and homogeneous materials with different Lam\'{e} constants $(\lambda,\mu)$ and $(\lambda_{1},\mu_{1})$. Then the elasticity tensors for the inclusions and the background can be written, respectively, as  $\mathbb{C}^{1}$ and $\mathbb{C}^{0}$, with
$$C_{ij\,kl}^{1}=\lambda_{1}\delta_{ij}\delta_{kl}+\mu_{1}(\delta_{ik}\delta_{jl}+\delta_{il}\delta_{jk}),$$
and
\begin{equation*}\label{C0}
C_{ij\,kl}^{0}=\lambda\delta_{ij}\delta_{kl}+\mu(\delta_{ik}\delta_{jl}+\delta_{il}\delta_{jk}),
\end{equation*}
where $i,j,k,l=1,2,3$ and $\delta_{ij}$ is the Kronecker symbol: $\delta_{ij}=0$ for $i\neq{j}$, $\delta_{ij}=1$ for $i=j$.
Let $u=\left(u_{1},u_{2},\cdots, u_{d}\right)^{T}:~\Omega\rightarrow\mathbb{R}^{d}$ denote the displacement field.
For a given vector valued function $\varphi$, we consider the following Dirichlet problem for the Lam\'{e} system
\begin{equation}\label{system}
\begin{cases}
\nabla\cdot\bigg(\left(\chi_{\widetilde{\Omega}}\mathbb{C}^{0}+\chi_{D_{1}\cup{D}_{2}}\mathbb{C}^{1}\right)e(u)\bigg)=0,
&\mbox{in}~\Omega,\\
u=\mathbf{\varphi},&\mbox{on}~\partial{\Omega},
\end{cases}
\end{equation}
where $\chi_{D}$ is the characteristic function of $D\subset\mathbb{R}^{d}$,
$$e(u)=\frac{1}{2}\left(\nabla{u}+(\nabla{u})^{T}\right)$$
is the strain tensor.

Assume that the standard ellipticity condition holds for \eqref{system}, that is,
\begin{equation}\label{coeff3_convex}
\mu>0,\quad\,d\lambda+2\mu>0;\quad\quad\mu_{1}>0,\quad\,d\lambda_{1}+2\mu_{1}>0.
\end{equation}
For $\varphi\in{H}^{1}(\Omega;\mathbb{R}^{d})$, it is well known that there exists a unique solution $u\in{H}^{1}(\Omega;\mathbb{R}^{d})$ of the Dirichlet problem \eqref{system}. More details can be found in the Appendix in \cite{bll}.

Let
\begin{equation*}\label{def_Psid}
\Psi:=\bigg\{\psi\in{C}^{1}(\mathbb{R}^{d};\mathbb{R}^{d})~\big|~2e(\psi)=\nabla\psi+(\nabla\psi)^{T}=0~\bigg\}
\end{equation*}
be the linear space of rigid displacement in $\mathbb{R}^{d}$. With $e_{1},\cdots,e_{d}$ denoting the standard basis of $\mathbb{R}^{d}$, $$\left\{~e_{i},~x_{j}e_{k}-x_{k}e_{j}~\big|~1\leq\,i\leq\,d,~1\leq\,j<k\leq\,d~\right\}$$ is a basis of $\Psi$. Denote this basis of $\Psi$ as $\{\psi^{\alpha}\}$, $\alpha=1,2,\cdots,\frac{d(d+1)}{2}$. If $\xi\in{H}^{1}(D; \mathbb{R}^{d})$, $e(\xi)=0$ in $D$,
and $D\subset \mathbb{R}^{d}$ is a connected  open set, then $\xi$ is a linear combination of $\{\psi^{\alpha}\}$ in $D$. 

For fixed $\lambda$ and $\mu$ satisfying \eqref{coeff3_convex}, denoting $u_{\lambda_{1},\mu_{1}}$ the solution of \eqref{system}. Then, as proved in the Appendix in \cite{bll},
$$u_{\lambda_{1},\mu_{1}}\rightarrow\,u\quad\mbox{in}~H^{1}(\Omega;\mathbb{R}^{d})\quad
\mbox{as}~~\min\{\mu_{1},d\lambda_{1}+2\mu_{1}\}\rightarrow\infty,$$
where $u$ is a $H^{1}(\Omega;\mathbb{R}^{d})$ solution of
\begin{equation}\label{mainequation}
\begin{cases}
\mathcal{L}_{\lambda,\mu}u:=\nabla\cdot\left(\mathbb{C}^{0}e(u)\right)=0,&\mbox{in}~\widetilde{\Omega},\\
u\big|_{+}=u\big|_{-},&\mbox{on}~\partial{D}_{1}\cup\partial{D}_{2},\\
e(u)=0,&\mbox{in}~D_{1}\cup{D}_{2},\\
\int_{\partial{D}_{i}}\frac{\partial{u}}{\partial\nu}\big|_{+}\cdot\psi^{\beta}=0,&i=1,2,~~\beta=1,2,\cdots,\frac{d(d+1)}{2},\\
u=\varphi,&\mbox{on}~\partial{\Omega}.
\end{cases}
\end{equation}
where
$$\frac{\partial{u}}{\partial\nu}\Big|_{+}:=\left(\mathbb{C}^{0}e(u)\right)\vec{n}
=\lambda\left(\nabla\cdot{u}\right)\vec{n}+\mu\left(\nabla{u}+(\nabla{u})^{T}\right)\vec{n},$$
and $\vec{n}$ is the unit outer normal of $D_{i}$, $i=1,2$. Here and throughout this paper the subscript $\pm$ indicates the limit from outside and inside the domain, respectively. 

The existence, uniqueness and regularity of weak solutions of \eqref{mainequation}, as well as a variational formulation, can be found in the Appendix in \cite{bll}. In particular, the  $H^{1}$ weak solution is in ${C}^{1}(\overline{\widetilde{\Omega}};\mathbb{R}^{d})\cap{C}^{1}(\overline{D_{1}\cup{D}_{2}};\mathbb{R}^{d})$. The solution is the unique
function which has the least energy in appropriate functional spaces, that is,
$$
I_{\infty}[u]=\min_{v\in\mathcal{A}}I_{\infty}[v],
\qquad~\quad
I_{\infty}[v]:=
\frac{1}{2}\int_{\widetilde \Omega}\left(\mathbb{C}^{(0)}
e(v),e(v)\right)dx,
$$
where
$$
\mathcal{A}:=\left\{u\in{H}_{\varphi}^{1}(\Omega;\mathbb{R}^{d})
~\big|~e(u)=0~~\mbox{in}~D_1\cup D_2\right\}.
$$

It is well known, see \cite{osy},  that for any open set $O$ and $u,v\in{C}^{2}(O)$,
\begin{equation}\label{equ4-1}
\int_{O}\left(\mathbb{C}^{0}e(u),e(v)\right)dx
=-\int_{O}\left(\mathcal{L}_{\lambda,\mu}u\right)\cdot{v}
+\int_{\partial O}\frac{\partial{u}}{\partial\nu}\Big|_{+}\cdot{v}.
\end{equation}
A calculation gives
\begin{align*}\label{L_u}
\left(\mathcal{L}_{\lambda,\mu}u\right)_{k}=\mu\Delta{u}_{k}+(\lambda+\mu)\partial_{x_{k}}\left(\nabla\cdot{u}\right),\quad\,k=1,2,3.
\end{align*}
We assume that for some $\delta_{0}>0$,
\begin{equation}\label{coeff4_strongelyconvex}
\delta_{0}\leq\mu,d\lambda+2\mu\leq\frac{1}{\delta_{0}}.
\end{equation}
 
Throughout the paper, unless otherwise stated, $C$ denotes a constant, whose values may vary from line to line, depending only on $d,\kappa_{0},\kappa_{1},\gamma,\delta_{0}$,  and an upper bound of the $C^{2}$ norm of $\partial\Omega$ and the $C^{2,\gamma}$ norms of $\partial{D}_{1}$ and $\partial{D}_{2}$, but independent of $\varepsilon$. Also, we call a constant having such dependence a {\it universal constant}. Let
$$\rho_{d}(\varepsilon)=\begin{cases}
\sqrt{\varepsilon},&d=2,\\
\frac{1}{|\log\varepsilon|}, &d=3.
\end{cases}$$
In order to show the optimality of the blow-up rate, we first recall the following upper bound estimates established in \cite{bll,bll2}.

\begin{thmA}\label{mainthm1}
(Upper Bounds, \cite{bll,bll2}) For $d=2,3$, assume that $\Omega$, $D_{1},D_{2}$, $\varepsilon$ are defined in \eqref{omega}, $\lambda$ and $\mu$ satisfy \eqref{coeff4_strongelyconvex} for some $\delta_{0}>0$,  and $\varphi\in{C}^{2}(\partial\Omega;\mathbb{R}^{d})$. Let $u\in{H}^{1}(\Omega;\mathbb{R}^{d})\cap{C}^{1}(\overline{\widetilde{\Omega}};\mathbb{R}^{d})$ be the solution of \eqref{mainequation}. Then for $0<\varepsilon<1/2$, we have
\begin{equation*}
\|\nabla{u}\|_{L^{\infty}(\Omega;\mathbb{R}^{d})}
\leq\,\frac{C\rho_{d}(\varepsilon)}{\varepsilon}\|\varphi\|_{C^{2}(\partial{\Omega};\mathbb{R}^{d})},
\end{equation*}
where $C$ is a universal constant.
\end{thmA}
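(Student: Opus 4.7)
The plan is to follow the decomposition-and-iteration scheme introduced in the authors' earlier works \cite{bll,bll2}. Since $e(u)=0$ in $D_i$, the restriction $u|_{D_i}$ is a rigid displacement $\sum_\alpha C_i^\alpha\psi^\alpha$. Introducing auxiliary fields $v_i^\alpha\in H^1(\widetilde\Omega;\mathbb{R}^d)$ satisfying $\mathcal{L}_{\lambda,\mu}v_i^\alpha=0$ in $\widetilde\Omega$ with boundary data $\psi^\alpha$ on $\partial D_i$, $0$ on $\partial D_j$ $(j\neq i)$, and $0$ on $\partial\Omega$, together with $v_0$ handling the exterior datum $\varphi$, one writes
\begin{equation*}
u=v_0+\sum_{i=1}^{2}\sum_{\alpha=1}^{d(d+1)/2}C_i^\alpha v_i^\alpha\quad\text{in }\widetilde\Omega,
\end{equation*}
where the $d(d+1)$ constants $C_i^\alpha$ are uniquely determined by the flux conditions in \eqref{mainequation}. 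The estimate then splits into (i) pointwise gradient bounds on each $v_i^\alpha$ and $v_0$, and (ii) a bound on the constants $C_i^\alpha$.

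For the pointwise bounds, the region away from the narrow neck between $D_1$ and $D_2$ is handled by standard interior and boundary Schauder regularity, giving $|\nabla v_i^\alpha|\le C$ there. Inside the neck, parametrized so that $\partial D_i\cap\{|x'|<r_0\}$ is the graph $x_d=\pm(\varepsilon/2+h_i(x'))$ with $h_i(x')\approx \kappa_0|x'|^2/2$, I would subtract from $v_i^\alpha$ an explicit linear-in-$x_d$ interpolant of the rigid boundary data and carry out a dyadic Caccioppoli iteration on the annular slabs $\{2^{-k-1}r_0\le|x'|\le 2^{-k}r_0\}$. Each step trades a slab energy for one at a smaller cross-section with a geometric decay factor; the iteration terminates at the scale $2^{-k}\simeq\sqrt\varepsilon$, yielding the pointwise bound
\begin{equation*}
|\nabla v_i^\alpha(x)|\le\frac{C}{\varepsilon+|x'|^2}\qquad\text{in the neck.}
\end{equation*}

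To bound the $C_i^\alpha$, I would use the symmetric matrix $A=(a_{(i,\alpha)(j,\beta)})$ with
\begin{equation*}
a_{(i,\alpha)(j,\beta)}=\int_{\partial D_i}\frac{\partial v_j^\beta}{\partial\nu}\Big|_{+}\cdot\psi^\alpha=-\int_{\widetilde\Omega}\bigl(\mathbb{C}^0 e(v_j^\beta),e(v_i^\alpha)\bigr)\,dx,
\end{equation*}
which is negative definite by \eqref{equ4-1}. The pointwise estimate above forces the diagonal blocks $a_{(i,\alpha)(i,\alpha)}$ to be of order $1/\rho_d(\varepsilon)$, while off-diagonal blocks $(i\ne j)$ and the right-hand side $\int_{\partial D_i}\partial_\nu v_0\cdot\psi^\alpha$ remain $O(\|\varphi\|_{C^2})$. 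A block-inversion of $A$ isolating its blowing-up diagonal then delivers $|C_i^\alpha|\le C\|\varphi\|_{C^2(\partial\Omega)}$ uniformly in $\varepsilon$. Combining with the pointwise bound on $\nabla v_i^\alpha$ gives the target estimate $\|\nabla u\|_{L^\infty}\le C\rho_d(\varepsilon)\varepsilon^{-1}\|\varphi\|_{C^2}$.

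The main obstacle is closing the dyadic energy iteration for the vector-valued system: unlike the scalar conductivity problem of \cite{bly1}, different basis elements $\psi^\alpha$ (translations versus rotations, tangential versus normal components) produce genuinely different local behaviors in the neck, so the linear interpolant subtracted at each step must be chosen in an $\alpha$-dependent way, and one must verify that the resulting Lamé-equation error is controlled uniformly across the dyadic scales and across the coupling between $\lambda$ and $\mu$. Once that iteration is set up, the linear-algebra step producing the bound on $C_i^\alpha$ follows from the negative definiteness of $A$ and the explicit leading-order behavior of its diagonal.
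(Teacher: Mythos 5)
Your plan reproduces the right scaffolding from \cite{bll,bll2} — the decomposition $u=v_0+\sum_{i,\alpha}C_i^\alpha v_i^\alpha$, the dyadic energy iteration in the neck giving $|\nabla v_i^\alpha|\le C/(\varepsilon+|x'|^2)$ for the translational $\alpha$, and the finite-dimensional linear system for the free constants. But the concluding logic has a genuine gap: bounding each $|C_i^\alpha|\le C$ and pairing it with $|\nabla v_i^\alpha|\le C/(\varepsilon+|x'|^2)$ only yields $\|\nabla u\|_{L^\infty}\le C/\varepsilon$ at the midpoint, not the claimed $C\rho_d(\varepsilon)/\varepsilon$. The gain from $\varepsilon^{-1}$ down to $\rho_d(\varepsilon)\varepsilon^{-1}$ is precisely the content of the theorem in $d=2,3$, and it is not a consequence of boundedness of the individual $C_i^\alpha$. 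The essential step — and the one your write-up omits — is the regrouping $C_1^\alpha v_1^\alpha+C_2^\alpha v_2^\alpha=(C_1^\alpha-C_2^\alpha)v_1^\alpha+C_2^\alpha(v_1^\alpha+v_2^\alpha)$ for $\alpha\le d$: the function $v^\alpha=v_1^\alpha+v_2^\alpha$ carries the \emph{same} rigid datum $\psi^\alpha$ on both $\partial D_1$ and $\partial D_2$ and so has $\|\nabla v^\alpha\|_{L^\infty}\le C$ (this is exactly \eqref{mainev1+v2}, which comes from the no-blow-up result in \cite{llby}), while one proves the sharp cancellation $|C_1^\alpha-C_2^\alpha|\le C\rho_d(\varepsilon)$, \eqref{maineC1-C2}. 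It is that product, not $|C_i^\alpha|\cdot|\nabla v_i^\alpha|$, that produces $\rho_d(\varepsilon)/\varepsilon$. The rotational modes $\alpha>d$ are handled separately using the milder bound \eqref{mainevi3}, $|\nabla v_i^\alpha|\le C|x'|/(\varepsilon+|x'|^2)+C\lesssim \varepsilon^{-1/2}$, which is already dominated by $\rho_d(\varepsilon)/\varepsilon$.

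Your linear-algebra step also rests on a false structural claim: the off-diagonal entries $a_{(1,\alpha)(2,\beta)}=-\int_{\partial D_2}\partial_\nu v_1^\alpha|_+\cdot\psi^\beta$ for $\alpha,\beta\le d$ are \emph{not} $O(1)$. They concentrate on the neck just as the diagonal entries do and are themselves of order $1/\rho_d(\varepsilon)$ (compare with \eqref{a11} in the scalar case, where $-a_{12}\sim 1/\rho_d(\varepsilon)$). This is why the system is nearly degenerate in the translation directions and why one works with the combinations $C_1^\alpha-C_2^\alpha$ and $C_1^\alpha+C_2^\alpha$ rather than block-inverting against a bounded off-diagonal. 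The paper's selected subsystem \eqref{C1C2_3} and the estimates in Lemma \ref{lem_a_11d=2}, Lemma \ref{lem_a_11} (such as $a_{11}^{\alpha\alpha}\sim 1/\sqrt{\varepsilon}$ in $d=2$, $a_{11}^{\alpha\alpha}\sim|\log\varepsilon|$ in $d=3$, $\det A_{22}\gtrsim 1$) are exactly what make that difference estimable with the right rate. Without the cancellation step and the correct treatment of the near-degeneracy, the argument as written only proves the weaker bound $C/\varepsilon$.
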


\begin{remark}\label{rem1}
Since $D_{1}$ and $D_{2}$ are two strictly convex subdomains of $\Omega$, there exist two points $P_{1}\in\partial{D}_{1}$ and $P_{2}\in\partial{D}_{2}$ such that
\begin{equation}\label{P1P2}
\mathrm{dist}(P_{1},P_{2})=\mathrm{dist}(\partial{D}_{1},\partial{D}_{2})=\varepsilon.
\end{equation}
Use $\overline{P_{1}P_{2}}$ to denote the line segment connecting $P_{1}$ and $P_{2}$. The proof of Theorem A actually gives us the following stronger estimates for $x\in\widetilde{\Omega}$:
\begin{equation}\label{mainestimates}
|\nabla{u}(x)|\leq
\begin{cases}
\dfrac{C}{\sqrt{\varepsilon}+\mathrm{dist}(x,\overline{P_{1}P_{2}})}\|\varphi\|_{C^{2}(\partial{\Omega};\mathbb{R}^{d})},&d=2;\\
\bigg(\dfrac{C}{|\log\varepsilon|\big(\varepsilon+\mathrm{dist}^{2}(x,\overline{P_{1}P_{2}})\big)}
+\dfrac{C\mathrm{dist}(x,\overline{P_{1}P_{2}})}{\varepsilon+\mathrm{dist}^{2}(x,\overline{P_{1}P_{2}})}\bigg)
~\|\varphi\|_{C^{2}(\partial{\Omega};\mathbb{R}^{d})},&d=3.
\end{cases}
\end{equation}
\end{remark}

The pointwise upper bound in \eqref{mainestimates} shows that the gradient $|\nabla u(x)|$ (at least the right hand side of \eqref{mainestimates}) would achieve its maximum on the segment $\overline{P_{1}P_{2}}$ if the blow-up occurred. However, whether the blow-up occurs or not depends totally on the boundary data $\varphi$ for given domain $\Omega$, $D_{1}$ and $D_{2}$ with suitable smoothness. Therefore, in order to show that the blow-up rate of the gradients obtained in Theorem A is  optimal, it is necessary to establish the lower bound of $|\nabla u(x)|$ on the segment $\overline{P_{1}P_{2}}$ with the same blow-up rate. 

To this aim, a key ingredient is to find a function $u_{b}^{*}$, one part of the limit function of $u$ as $\varepsilon$ tends to zero. Denote $D_{1}^{*}:=\{~x\in\mathbb{R}^{d}~|~x+P_{1}\in{D}_{1}~\}$ and $D_{2}^{*}:=\{~x\in\mathbb{R}^{d}~|~x+P_{2}\in{D}_{2}~\}$. Set $\widetilde{\Omega}^{*}:=D\setminus \overline{D_{1}^{*}\cup{D}_{2}^{*}}$. Let $u_{b}^{*}$ be the solution of the following boundary value problem:
\begin{align}\label{u00*}
\begin{cases}
  \mathcal{L}_{\lambda,\mu}u_{b}^{*}=0,\quad&
\hbox{in}\  \widetilde{\Omega}^{*},  \\
u_{b}^{*}=\sum\limits_{\alpha=1}^{d}C_{*}^{\alpha}\psi^{\alpha},\ &\hbox{on}\ \partial{D}_{1}^{*}\cup\partial{D}_{2}^{*},\\
u_{b}^{*}=\varphi(x),&\hbox{on} \ \partial{D},
\end{cases}
\end{align}
where the constants $C_{*}^{\alpha}$, $\alpha=1,2,\cdots,d$, are determined later. 
We remark that $u_{b}^{*}$ is smooth near the origin by theorem 1.1 in \cite{llby}. In order to capture the lower bound of $|\nabla u|$, we now introduce a vector-valued linear functional of $\varphi$,
\begin{align}\label{def_bj*}
b_{*1}^{\beta}[\varphi]:=\int_{\partial{D}_{1}^{*}}\frac{\partial u_{b}^{*}}{\partial \nu}\Big|_{+}\cdot\psi^{\beta},\quad\beta=1,2,\cdots,d.
\end{align}
Notice that these quantities are independent of $\varepsilon$. It will be turn out that they will determine whether or not the blow-up to occur. We call them {\em{blow-up factors}}. Their important role will be shown in next section. The main result of this paper is the following lower bounds of $|\nabla u|$ on $\overline{P_{1}P_{2}}$.

\begin{theorem}(Lower Bounds for $d=2,3$).\label{thm1.2}
For $d=2,3$, under the assumptions as in Theorem A, let $u\in H^1(\Omega; \mathbb{R}^{d})\cap C^1(\overline{\widetilde{\Omega}}; \mathbb{R}^{d})$ be a solution to \eqref{mainequation}. Then if there exists a $\varphi$ such that $b_{*1}^{k_0}[\varphi]\neq0$ for some integer $1\leq\,k_0\leq\,d$, then for sufficiently small $0<\varepsilon<1/2$, 
$$\big|\nabla u(x)\big|\geq\frac{\rho_{d}(\varepsilon)}{C\varepsilon}|b_{*1}^{k_0}[\varphi]|,\quad\mbox{for}~~\,x\in\overline{P_{1}P_{2}},$$
where $C$ is a universal constant.
\end{theorem}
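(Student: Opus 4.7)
The plan is to follow the now-standard decomposition used in \cite{bll,bll2} and extract a lower bound by inverting the linear system governing the free parameters. Write
$$u \;=\; \sum_{i=1,2}\sum_{\alpha=1}^{d(d+1)/2} C_i^\alpha\, v_i^\alpha \;+\; v_0,$$
where $v_i^\alpha$ solves $\mathcal{L}_{\lambda,\mu}v_i^\alpha = 0$ in $\widetilde\Omega$ with boundary values $\psi^\alpha$ on $\partial D_i$ and $0$ on $\partial D_j$ ($j\ne i$) and $\partial\Omega$, while $v_0$ carries the Dirichlet data ($v_0=\varphi$ on $\partial\Omega$, $v_0=0$ on $\partial D_1\cup\partial D_2$). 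Inserting this ansatz into the moment conditions in \eqref{mainequation} and integrating by parts via \eqref{equ4-1} produces the linear system
$$\sum_{j,\beta} a_{ij}^{\alpha\beta}\, C_j^\beta \;=\; b_i^\alpha, \qquad a_{ij}^{\alpha\beta} := -\int_{\partial D_i}\frac{\partial v_j^\beta}{\partial\nu}\Big|_+\!\cdot\psi^\alpha,\qquad b_i^\alpha := \int_{\partial D_i}\frac{\partial v_0}{\partial\nu}\Big|_+\!\cdot\psi^\alpha.$$

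\textbf{Step 1 (matrix asymptotics).} Using the energy estimates of \cite{bll,bll2}, the translational block $1\le\alpha,\beta\le d$ has leading behaviour
$$a_{ii}^{\alpha\alpha} \;=\; \frac{c_\alpha}{\rho_d(\varepsilon)} + O(1),\qquad a_{12}^{\alpha\alpha}=a_{21}^{\alpha\alpha} \;=\; -\frac{c_\alpha}{\rho_d(\varepsilon)} + O(1),$$
with $c_\alpha>0$ universal, while all entries involving a rotational index ($\alpha>d$) or a cross pair are $O(1)$ or smaller. Therefore, after reducing to the single translational index $k_0$, the $2\times 2$ dominant block is
$$\frac{c_{k_0}}{\rho_d(\varepsilon)}\begin{pmatrix}1 & -1\\ -1 & 1\end{pmatrix}\begin{pmatrix}C_1^{k_0}\\ C_2^{k_0}\end{pmatrix} \;=\; \begin{pmatrix}b_1^{k_0}\\ b_2^{k_0}\end{pmatrix} + (\text{lower order coupling}),$$
which on solving (projecting onto the antisymmetric direction, since the symmetric direction has a bounded coefficient) yields
$$C_1^{k_0} - C_2^{k_0} \;=\; \frac{\rho_d(\varepsilon)}{2c_{k_0}}\bigl(b_1^{k_0} - b_2^{k_0}\bigr) + o\bigl(\rho_d(\varepsilon)\bigr).$$

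\textbf{Step 2 (convergence of the blow-up factor).} The essential new input is $b_i^{k_0}\to b_{*i}^{k_0}[\varphi]$ as $\varepsilon\to 0$. To prove it I would compare $v_0$ with (the appropriate translate of) $u_b^{*}$: both solve the homogeneous Lamé system in their respective domains, agree on $\partial\Omega$, and have linear-combination-of-$\psi^\alpha$ data on the inclusion boundaries. Away from any fixed neighbourhood of $\overline{P_1P_2}$ the two domains coincide; inside that neighbourhood the local $C^1$ regularity of $u_b^{*}$ at the touching point (Theorem 1.1 of \cite{llby}) plus an $H^1$ energy comparison lets us match the boundary integrals defining $b_i^{k_0}$ with those defining $b_{*i}^{k_0}[\varphi]$ up to $o(1)$. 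This is the step I expect to be the \emph{main technical obstacle}: the convergence must be quantitative enough to survive multiplication by the $\rho_d(\varepsilon)^{-1}$ in the matrix asymptotics. A standard integration-by-parts identity exploiting $\mathcal{L}_{\lambda,\mu}u_b^{*}=0$ and $e(\psi^{k_0})=0$ links $b_{*1}^{k_0}[\varphi]$ and $b_{*2}^{k_0}[\varphi]$ through a boundary integral over $\partial D$, so the assumption $b_{*1}^{k_0}[\varphi]\neq 0$ is transferred into $|b_1^{k_0}-b_2^{k_0}|\ge c\,|b_{*1}^{k_0}[\varphi]|$ for $\varepsilon$ small. Combined with Step 1 this gives
$$|C_1^{k_0}-C_2^{k_0}|\;\ge\; c\,\rho_d(\varepsilon)\,|b_{*1}^{k_0}[\varphi]|.$$

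\textbf{Step 3 (pointwise lower bound on the segment).} Fix coordinates so that $x_d$ is along $\overline{P_1P_2}$. Explicit analysis of $v_1^{k_0}$ in the narrow gap as in \cite{bll,bll2} gives $\partial_{x_d} v_1^{k_0}(x) = \varepsilon^{-1}\bigl(1+o(1)\bigr)$ for $x\in\overline{P_1P_2}$, while $v_1^{k_0}+v_2^{k_0}$ differs from the rigid motion $\psi^{k_0}$ by a $C^1$-bounded correction, so $\partial_{x_d}(v_1^{k_0}+v_2^{k_0})$ stays bounded on the segment. Consequently
$$\partial_{x_d} u(x) \;=\; (C_1^{k_0}-C_2^{k_0})\,\partial_{x_d} v_1^{k_0}(x) + E(x),$$
where the error $E$ collects contributions from other translational modes ($\alpha\ne k_0$), from the rotational modes (which carry only $O(1)$ coefficients times milder singularities, by Step 1), and from $\nabla v_0$ (universally bounded on $\overline{P_1P_2}$). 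Each of these is strictly smaller than the leading term on the segment. Taking absolute values and applying Step 2 yields
$$|\nabla u(x)|\;\ge\;\frac{\rho_d(\varepsilon)}{C\,\varepsilon}\,|b_{*1}^{k_0}[\varphi]|,\qquad x\in\overline{P_1P_2},$$
which is the assertion of Theorem \ref{thm1.2}.
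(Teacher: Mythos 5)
There is a genuine gap, and it sits at the heart of your Steps 1--2. Your right-hand side $b_i^\alpha:=\int_{\partial D_i}\frac{\partial v_0}{\partial\nu}\big|_+\cdot\psi^\alpha$ is the flux of $v_0$ alone (the paper's $\tilde b_i^\alpha$), yet in Step 2 you assert $b_i^{k_0}\to b_{*i}^{k_0}[\varphi]$. That is false in general: by \eqref{def_bj*} the blow-up factor is the flux of $u_b^{*}=\sum_{\alpha\le d}C_*^{\alpha}v^{*\alpha}+v_0^{*}$, so it contains the terms $\sum_{\alpha}C_*^{\alpha}\int_{\partial D_1^{*}}\frac{\partial v^{*\alpha}}{\partial\nu}\big|_+\cdot\psi^{k_0}$, which your $v_0$-flux misses entirely; your quantity converges only to the $v_0^{*}$-part (the paper's $b_{*1}^{1,0}$ in the proof of Proposition \ref{lemma5.1d=2}). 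Consequently the hypothesis $b_{*1}^{k_0}[\varphi]\neq0$ never enters your argument in a valid way, and the asserted transfer $|b_1^{k_0}-b_2^{k_0}|\geq c\,|b_{*1}^{k_0}[\varphi]|$ is unsubstantiated. The paper avoids exactly this by absorbing $\sum_{\alpha\le d}C_2^{\alpha}v^{\alpha}+v_0$ into $u_b$, defining $b_1^{\beta}$ by \eqref{def_bj}, and proving $b_1^{\beta}\to b_{*1}^{\beta}[\varphi]$ (Proposition \ref{lemma5.1d=2}), which in turn requires Lemma \ref{lem3.2}, i.e. $C_2^{\alpha}\to C_*^{\alpha}$ for $\alpha\le d$ — a step absent from your plan.

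The quantitative inversion in Step 1 also does not go through as written. In the full system with all $C_j^{\beta}$ as unknowns, the equations for $(1,k_0)$ and $(2,k_0)$ couple to the rotational modes and to the symmetric translational combinations with $O(1)$ coefficients multiplying $O(1)$ unknowns; after dividing by the dominant $c_{k_0}/\rho_d(\varepsilon)$ these yield errors of size $O(\rho_d(\varepsilon))$ — the same order as your main term — so $C_1^{k_0}-C_2^{k_0}=\frac{\rho_d(\varepsilon)}{2c_{k_0}}(b_1^{k_0}-b_2^{k_0})+o(\rho_d(\varepsilon))$ does not follow. Moreover, the translational cross entries are not known to be $O(1)$: in $d=2$ the paper needs and proves only $|a_{11}^{12}|\le C|\log\varepsilon|$ (estimate \eqref{a11_12d=2}), and the precise asymptotics $a_{ii}^{\alpha\alpha}=c_\alpha/\rho_d(\varepsilon)+O(1)$ with a well-defined constant are nowhere established (two-sided bounds are all that is available or needed). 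The paper's actual route — select the subsystem \eqref{C1C2_3} with unknowns $C_1^{\alpha}-C_2^{\alpha}$ and the rotational constants, prove the nondegeneracy of the rotational block (Lemmas \ref{lem4.4} and \ref{lem5.2}), and apply Cramer's rule to get Propositions \ref{prop4.1} and \ref{prop5.1} — is what makes the error terms genuinely lower order. Finally, in Step 3 your claim that the modes $\alpha\ne k_0$ contribute ``strictly smaller'' terms is unjustified: $|C_1^{\alpha}-C_2^{\alpha}||\nabla v_1^{\alpha}|$ can be of the same size $\rho_d(\varepsilon)/\varepsilon$ on $\overline{P_1P_2}$; what rules out cancellation is that for $\alpha\le d$ the different modes occupy different entries of the matrix $\nabla u$ on the segment, as in \eqref{C1-C2_ubar}, and this component-separation argument must be made explicit.
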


\begin{remark}
Theorem \ref{thm1.2}, together with Theorem A, shows that the optimal blow-up rate of $|\nabla{u}|$ is $\frac{\rho_{d}(\varepsilon)}{\varepsilon}$, namely, $\varepsilon^{-1/2}$ in dimension $d=2$, $(\varepsilon|\log\varepsilon|)^{-1}$ in dimension $d=3$. These generic blow-up rates are actually the same as the scalar case \cite{bly1}, as well as expected in \cite{bll,bll2}. Of course, we also can define $b_{*2}^{\beta}[\varphi]$ on the boundary $\partial{D}_{2}^{*}$. We would like to point out that Kang and Yu \cite{ky} proved the blow-up rate $\varepsilon^{-1/2}$ is optimal under a stronger assumption that inclusions are of $C^{3,\alpha}$ in dimension two by using a singular function. The method is totally different with ours. Here we only assume that $\partial{D}_{1}$ and $\partial{D}_{2}$ are of $C^{2,\gamma}$ as before.
\end{remark}

For the convenience of application, we give the corresponding results for the following perfect conductivity problem
\begin{equation}\label{mainequation2}
\begin{cases}
\Delta u=0,&\mbox{in}~\widetilde{\Omega},\\
u\big|_{+}=u\big|_{-},&\mbox{on}~\partial{D}_{1}\cup\partial{D}_{2},\\
\nabla u=0,&\mbox{in}~D_{1}\cup{D}_{2},\\
\int_{\partial{D}_{i}}\frac{\partial{u}}{\partial\nu}\big|_{+}=0,&i=1,2,~~\alpha=1,2,\cdots,\frac{d(d+1)}{2},\\
u=\varphi,&\mbox{on}~\partial{\Omega}.
\end{cases}
\end{equation}
The proof is much simpler and shorter than that for the elasticity case. An analogous blow-up factor is defined by
\begin{equation}\label{b1*}
b_{1}^{*}[\varphi]:=\int_{\partial{D}_{1}^{*}}\frac{\partial u^{*}}{\partial\nu},
\end{equation}
where $u^{*}$ satisfies
$$
\begin{cases}
\Delta u^{*}=0,&\mbox{in}~\widetilde{\Omega}^{*},\\
u^{*}=C^{*},&\mbox{on}~\partial{D}^{*}_{1}\cup\partial{D}^{*}_{2},\\
u^{*}=\varphi,&\mbox{on}~\partial{\Omega}.
\end{cases}$$
and the constant $C^{*}$ is uniquely determined by minimizing the energy 
$$\int_{\widetilde{\Omega}^{*}}|\nabla u|^{2}dx,$$
in an admissiable function space $$
\mathcal{A}_{0}:=\left\{u\in{H}^{1}(\Omega)
~\big|~\nabla u=0~~\mbox{in}~{D}^{*}_{1}\cup{D}^{*}_{2},~\mbox{and}~u=\varphi~\mbox{on}~\partial{\Omega}\right\}.$$

\begin{theorem}(Lower Bounds for perfect conductivity problem).\label{thm1.3}
For $d=2,3$, under the assumptions for the domain as in Theorem A, let $u\in H^1(\Omega)\cap C^1(\overline{\widetilde{\Omega}})$ be a solution to \eqref{mainequation2}. Then if there exists a $\varphi$ such that $b_{1}^{*}[\varphi]\neq 0$, then for sufficiently small $0<\varepsilon<1/2$, 
$$\big|\nabla u(x)\big|\geq\frac{\rho_{d}(\varepsilon)}{C\varepsilon}|b_{1}^{*}[\varphi]|,\quad\mbox{for}~~\,x\in\overline{P_{1}P_{2}},$$
where $C$ is a universal constant.
\end{theorem}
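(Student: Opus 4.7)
The plan is to decompose $u$ into harmonic auxiliary potentials, reduce the lower bound on $\overline{P_1P_2}$ to an estimate on $|C_1-C_2|$ for the coefficients of this decomposition, and then identify $C_1-C_2$ asymptotically with $\rho_d(\varepsilon)\,b_1^*[\varphi]$ by passing to the limit in a symmetric $2\times 2$ linear system.

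For $i=1,2$ let $v_i\in H^1(\widetilde{\Omega})$ be harmonic in $\widetilde{\Omega}$ with $v_i=\delta_{ij}$ on $\partial D_j$ and $v_i=0$ on $\partial\Omega$, and let $v_0$ be harmonic in $\widetilde{\Omega}$ with $v_0=0$ on $\partial D_1\cup\partial D_2$ and $v_0=\varphi$ on $\partial\Omega$. Then $u=v_0+C_1v_1+C_2v_2$, and the two flux conditions $\int_{\partial D_i}\partial_\nu u|_+=0$ become the symmetric system $\sum_j a_{ij}C_j=b_i$ with $a_{ij}=\int_{\widetilde{\Omega}}\nabla v_i\cdot\nabla v_j\,dx$ and $b_i=\int_{\partial D_i}\partial_\nu v_0|_+$. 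Setting $V_\varepsilon:=v_1+v_2$ and $s_i:=a_{i1}+a_{i2}=-\int_{\partial D_i}\partial_\nu V_\varepsilon|_+$ (Green's identity), Cramer's rule gives
$$
C_1-C_2=\frac{s_2\, b_1 - s_1\, b_2}{a_{11}a_{22}-a_{12}^{\,2}}.
$$

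Because $V_\varepsilon=1$ on the entire union $\partial D_1\cup\partial D_2$ and $v_0=0$ on the same set, both $V_\varepsilon$ and $v_0$ solve \emph{nondegenerate} Dirichlet problems: they converge in $C^1_{\mathrm{loc}}$ on $\overline{\widetilde{\Omega}^*}$ to $V^*$ and $v_*^0$, with $C^1$-regularity at the touching point $0$ by \cite{llby}. Hence $s_i\to -B_i:=-\int_{\partial D_i^*}\partial_\nu V^*|_+$ and $b_i\to b_i^{(0)}:=\int_{\partial D_i^*}\partial_\nu v_*^0|_+$, while $s_1+s_2=\int_{\widetilde{\Omega}}|\nabla V_\varepsilon|^2\,dx$ and $|b_i|$ stay uniformly bounded. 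In contrast, $v_1-v_2$ carries the singular behavior: the capacity of two close-to-touching convex bodies with antisymmetric unit potentials gives $a_{11}+a_{22}-2a_{12}=\int_{\widetilde{\Omega}}|\nabla(v_1-v_2)|^2\asymp 1/\rho_d(\varepsilon)$, forcing $-a_{12}\asymp 1/\rho_d(\varepsilon)$ and $a_{11}a_{22}-a_{12}^{\,2}\asymp 1/\rho_d(\varepsilon)$. Therefore
$$
C_1-C_2\;\sim\;\rho_d(\varepsilon)\,\frac{s_2 b_1-s_1 b_2}{s_1+s_2}\;\longrightarrow\;\rho_d(\varepsilon)\,\frac{B_2 b_1^{(0)}-B_1 b_2^{(0)}}{B_1+B_2}=\rho_d(\varepsilon)\,b_1^*[\varphi],
$$
the last equality following from $C^*=-(b_1^{(0)}+b_2^{(0)})/(B_1+B_2)$ (the variational characterization of $C^*$) and the definition \eqref{b1*}. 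Consequently $|C_1-C_2|\geq c\,\rho_d(\varepsilon)\,|b_1^*[\varphi]|$ for all sufficiently small $\varepsilon$.

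For the final step, a direct barrier/competitor argument in the narrow neck (comparing against the linear interpolant of the antisymmetric boundary data $\pm 1$ across the gap) gives $|\nabla(v_1-v_2)(x)|\geq c/\varepsilon$ on $\overline{P_1P_2}$, while $|\nabla V_\varepsilon|$ and $|\nabla v_0|$ are uniformly bounded there by the $C^1_{\mathrm{loc}}$-convergence and $|C_1+C_2|\leq C\|\varphi\|_\infty$ by the maximum principle for $u$. Writing $\nabla u=\nabla v_0+\tfrac{1}{2}(C_1+C_2)\nabla V_\varepsilon+\tfrac{1}{2}(C_1-C_2)\nabla(v_1-v_2)$, the third term dominates the first two once $\varepsilon$ is small, producing the stated lower bound. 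The main obstacle is justifying the flux convergence $s_i\to -B_i$ and $b_i\to b_i^{(0)}$ despite the touching-point singularity: one uses that $V_\varepsilon$ and $v_0$ (unlike $v_1$, $v_2$ individually) stay in a compact subset of $C^1$ on a fixed neighborhood of $0$, a consequence of \cite{llby} together with the uniform energy bounds on $\int|\nabla V_\varepsilon|^2$ and $\int|\nabla v_0|^2$.
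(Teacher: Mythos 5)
Your linear-algebra reduction is correct and is genuinely different from the paper's: you keep the full symmetric $2\times2$ system $\sum_j a_{ij}C_j=b_i$ with $b_i=\int_{\partial D_i}\partial_\nu v_0|_+$ and use Cramer's rule, whereas the paper works with the reduced system \eqref{equ_abc2} in which the right-hand side is the flux of $u^{b}$ (which already contains the unknown $C_2$), and then separately shows $C_2\to C^{*}$ (Lemma \ref{lem6.2}) and $b_1\to b_1^{*}$ (Proposition \ref{lem5.1}). Your closed-form identity $b_1^{*}=\bigl(B_2b_1^{(0)}-B_1b_2^{(0)}\bigr)/(B_1+B_2)$, obtained from $C^{*}=-(b_1^{(0)}+b_2^{(0)})/(B_1+B_2)$, is correct and nicely packages what the paper does in two steps. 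Two small inaccuracies are harmless: the asymptotic $C_1-C_2\sim\rho_d(\varepsilon)\,(s_2b_1-s_1b_2)/(s_1+s_2)$ with constant $1$ is not justified (only two-sided comparability is available, since $\rho_d(\varepsilon)a_{11}$ need not converge to $1$), but for the lower bound you only need $\det A\le C/\rho_d(\varepsilon)$ together with the limit of the numerator; and the bound $|\nabla(v_1-v_2)|\ge c/\varepsilon$ on $\overline{P_1P_2}$ should be taken from the comparison with the explicit $\bar u$ (the paper's \eqref{estimate_w}--\eqref{nablav12}) rather than a vague barrier argument.

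The genuine gap is exactly at the step you call the ``main obstacle'': the flux convergences $s_i\to -B_i$ and $b_i\to b_i^{(0)}$ are asserted, not proved, and the justification you offer does not work. Theorem 1.1 of \cite{llby} gives only a uniform bound $|\nabla(v_1+v_2)|,|\nabla v_0|\le C$; together with energy bounds this yields equiboundedness of gradients, not equicontinuity, so it does not place $v_1+v_2$ and $v_0$ in a compact subset of $C^1$ near the touching point, and ``$C^1_{\mathrm{loc}}$ convergence on $\overline{\widetilde\Omega^{*}}$'' does not follow. Moreover, even granting local convergence of gradients away from the origin, the quantities you must compare are surface integrals over \emph{different} surfaces, $\partial D_i$ versus the translated $\partial D_i^{*}$, which collapse onto each other at the touching point; passing to the limit in these integrals is a quantitative matter. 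This is precisely the content of the paper's Proposition \ref{lem5.1} and of the claims \eqref{convergence_v12}--\eqref{convergence_v0} inside Lemma \ref{lem6.2}: one first proves an $L^\infty$ comparison on the lens region $V$ by the maximum principle (e.g. $|(v_1+v_2)-v_1^{*}|\le C\varepsilon$, using the uniform gradient bounds only to control the boundary discrepancy on $\Gamma_{ij}$), and then converts closeness of the functions into closeness of the fluxes by moving the integral, via harmonicity in the half-domain $\Omega^{+}$, onto $\partial\Omega^{+}$ and the mid-plane $\gamma$, splitting $\gamma$ into $\gamma_1,\gamma_2,\gamma_3$ and using interior gradient estimates for the harmonic difference (the uniform gradient bound only disposes of the small piece $\gamma_1$). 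Without this argument, or an equivalent quantitative substitute, your identification of the limit of $(s_2b_1-s_1b_2)/(s_1+s_2)$ with $b_1^{*}[\varphi]$ -- and hence the lower bound itself -- is not established.
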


\begin{remark}
We remark that the quantity $b_{1}^{*}[\varphi]$ is independent of $\varepsilon$. This is an essential difference with $Q_{\varepsilon}[\varphi]$ defined in \cite{bly1}. On the other hand, one can see that $u^{*}$ is smooth near the origin while $v_{i}^{*}$, $i=1,2$, in the definition of $Q_{\varepsilon}[\varphi]$ are singular at the origin. So the definition of $b_{1}^{*}[\varphi]$, \eqref{b1*}, is more natural from physical viewpoint and it is easier to check whether it equals to zero or not, in these two physically related dimensions, although it can not be used to deal with higher dimensions cases so far.  
\end{remark}

The rest of this paper is organized as follows. In Section \ref{sec2}, we list several known results from \cite{bll,bll2} about $|\nabla v_{i}^{\alpha}|$ and $C_{i}^{\alpha}$, after we decompose the solution $u=\sum_{\alpha=1}^{d(d+1)/2}(C_{1}^{\alpha}v_{1}^{\alpha}+C_{2}^{\alpha}v_{2}^{\alpha})+v_{0}$ in $\widetilde{\Omega}$, see \eqref{decom_u} below. By a careful observation of the structure of a system of linear equations for $C_{i}^{\alpha}$, we find a quantity $b_{1}^{\beta}[\varphi]$, which turns out to be convergent to the blow-up factor $b_{*1}^{\beta}[\varphi]$. This is the heart of this paper. The proof is technical and carried out in Section \ref{sec3}. Section \ref{sec4} is devoted to proving Theorem \ref{thm1.2} in dimension two and the proof of Theorem \ref{thm1.2} in dimension three is given in Section \ref{sec5}. Finally, we prove Theorem \ref{thm1.3} in Section \ref{sec6} to give a new and more simple proof of results in \cite{bly1} for the perfect conductivity in dimension two and three, especially for the lower bound estimates of $|\nabla u|$.

\bigskip

\section{Preliminaries and the blow-up factor}\label{sec2}

In this section we first introduce a decomposition of the solution of \eqref{mainequation}. In Subsection \ref{subsec2.4}, we choose a new system of linear equations for $C_{i}^{\alpha}$ from the whole system to solve $C_{1}^{\alpha}-C_{2}^{\alpha}$, $\alpha=1,2,3$. It is a different way from the selection made in \cite{bll,bll2} that just allows us to obtain upper bound estimates. While this selection makes it possible to introduce the blow-up factor $b_{*1}^{\beta}[\varphi]$ in Subsection \ref{subsec2.5} to get a lower bound of $|\nabla u|$. In the end, we list several preliminary results from our earlier papers \cite{bll,bll2} to make our paper self-contained and our exposition clear.

\subsection{Decomposition of $u$}\label{sec_pre}

By the third line of \eqref{mainequation}, $u$ is a linear combination of $\{\psi^{\alpha}\}$ in $D_{1}$ and $D_{2}$, respectively. By using continuity, we decompose the solution of \eqref{mainequation}, as in \cite{bll},  as follows:
\begin{equation}\label{decom_u}
u=\sum_{\alpha=1}^{d(d+1)/2}C_{1}^{\alpha}v_{1}^{\alpha}+\sum_{\alpha=1}^{d(d+1)/2}C_{2}^{\alpha}v_{2}^{\alpha}+v_{0},\quad\quad\mbox{in}~\widetilde{\Omega},
\end{equation}
where $v_{i}^{\alpha}\in{C}^{1}(\widetilde{\Omega};\mathbb{R}^{d})$, $i=1,2$, $\alpha=1,2,\cdots,\frac{d(d+1)}{2}$,  and $v_{0}\in{C}^{1}(\widetilde{\Omega};\mathbb{R}^{d})$ are respectively the solution of
\begin{equation}\label{v1alpha}
\begin{cases}
\mathcal{L}_{\lambda,\mu}v_{i}^{\alpha}=0,&\mbox{in}~\widetilde{\Omega},\\
v_{i}^{\alpha}=\psi^{\alpha},&\mbox{on}~\partial{D}_{i},\\
v_{i}^{\alpha}=0,&\mbox{on}~\partial{D}_{j}\cup\partial{\Omega},~j\neq\,i,
\end{cases}
\end{equation}
and
\begin{equation}\label{v3}
\begin{cases}
\mathcal{L}_{\lambda,\mu}v_{0}=0,&\mbox{in}~\widetilde{\Omega},\\
v_{0}=0,&\mbox{on}~\partial{D}_{1}\cup\partial{D}_{2},\\
v_{0}=\varphi,&\mbox{on}~\partial{\Omega}.
\end{cases}
\end{equation}
The constants $C_{i}^{\alpha}:=C_{i}^{\alpha}(\varepsilon)$, $i=1,2$, $\alpha=1,2,\cdots,\frac{d(d+1)}{2}$, are uniquely determined by $u$.

By the decomposition \eqref{decom_u}, we write
\begin{equation}\label{nablau_dec}
\nabla{u}=\sum_{\alpha=1}^{d}\left(C_{1}^{\alpha}-C_{2}^{\alpha}\right)\nabla{v}_{1}^{\alpha}
+\sum_{i=1}^{2}\sum_{\alpha=d+1}^{\frac{d(d+1)}{2}}C_{i}^{\alpha}\nabla{v}_{i}^{\alpha}
+\nabla{u}_{b},\quad\mbox{in}~\widetilde{\Omega},
\end{equation}
where
$$u_{b}:=\sum_{\alpha=1}^{d}C_{2}^{\alpha}v^{\alpha}+v_{0}, \quad\,v^{\alpha}=v_{1}^{\alpha}+v_{2}^{\alpha}.$$
It is obvious that $v^{\alpha}$, $\alpha=1,2,\cdots,d$, verifies
\begin{equation}\label{equ_valpha}
\begin{cases}
\mathcal{L}_{\lambda,\mu}v^{\alpha}=0,&\mbox{in}~\widetilde{\Omega},\\
v^{\alpha}=\psi^{\alpha},&\mbox{on}~\partial{D}_{1}\cup\partial{D}_{2},\\
v^{\alpha}=0,&\mbox{on}~\partial{\Omega}.
\end{cases}
\end{equation}
Notice that from theorem 1.1 in \cite{llby}, 
\begin{equation}\label{v0bdd}
\|\nabla v^{\alpha}\|_{L^{\infty}(\widetilde{\Omega}^{*})}\leq\,C,\quad\mbox{and}~\|\nabla v_{0}\|_{L^{\infty}(\widetilde{\Omega}^{*})}\leq\,C,
\end{equation}
since the displacement takes the same constant value on the the boundaries of both inclusions. So that $|\nabla u_{b}|$ is also bounded.

\subsection{A selected system of linear equations for $C_{i}^{\alpha}$}\label{subsec2.4}

By the linearity of $e(u)$ and decomposition \eqref{nablau_dec},
$$e(u)=\sum_{\alpha=1}^{d}(C_{1}^{\alpha}-C_{2}^{\alpha})e\left(v_{1}^{\alpha}\right)+\sum_{i=1}^{2}\sum_{\alpha=d+1}^{\frac{d(d+1)}{2}}C_{i}^{\alpha}e\left(v_{i}^{\alpha}\right)+e({u}_{b}),
\quad\mbox{in}~~\widetilde{\Omega}.$$
It follows from the forth line of \eqref{mainequation} that
\begin{align}\label{C1C2_2}
\sum_{\alpha=1}^{d}(C_{1}^{\alpha}&-C_{2}^{\alpha})\int_{\partial{D}_{j}}\frac{\partial{v}_{1}^{\alpha}}{\partial\nu}\Big|_{+}\cdot\psi^{\beta}+\sum_{i=1}^{2}\sum_{\alpha=d+1}^{\frac{d(d+1)}{2}}C_{i}^{\alpha}\int_{\partial{D}_{j}}\frac{\partial{v}_{i}^{\alpha}}{\partial\nu}\Big|_{+}\cdot\psi^{\beta}\nonumber\\
&+\int_{\partial{D}_{j}}\frac{\partial{u}_{b}}{\partial\nu}\Big|_{+}\cdot\psi^{\beta}=0,\quad\,j=1,2,~~\beta=1,2,\cdots,\frac{d(d+1)}{2}.
\end{align}

Denote, for $i,j=1,2,~~\alpha,\beta=1,2,\cdots,\frac{d(d+1)}{2}$,
\begin{align}\label{def_bj}
a_{ij}^{\alpha\beta}:=-\int_{\partial{D}_{j}}\frac{\partial{v}_{i}^{\alpha}}{\partial\nu}\Big|_{+}\cdot\psi^{\beta},\qquad
b_{j}^{\beta}:=b_{j}^{\beta}[\varphi]=\int_{\partial{D}_{j}}\frac{\partial{u}_{b}}{\partial\nu}\Big|_{+}\cdot\psi^{\beta}.
\end{align}
Then \eqref{C1C2_2} can be written as
\begin{equation}\label{C1C2_3}
\left\{
\begin{aligned}
\sum_{\alpha=1}^{d}(C_{1}^{\alpha}-C_{2}^{\alpha})a_{11}^{\alpha\beta}+\sum_{i=1}^{2}\sum_{\alpha=d+1}^{\frac{d(d+1)}{2}}C_{i}^{\alpha}a_{i1}^{\alpha\beta}-b_{1}^{\beta}&=0,\\
\sum_{\alpha=1}^{d}(C_{1}^{\alpha}-C_{2}^{\alpha})a_{12}^{\alpha\beta}+\sum_{i=1}^{2}\sum_{\alpha=d+1}^{\frac{d(d+1)}{2}}C_{i}^{\alpha}a_{i2}^{\alpha\beta}-b_{2}^{\beta}&=0,
\end{aligned}
\right.\quad\quad~~\beta=1,2,\cdots,\frac{d(d+1)}{2}.
\end{equation}
We select $\beta=1,2,\cdots,\frac{d(d+1)}{2}$ for $j=1$ and $\beta=d+1,\cdots,\frac{d(d+1)}{2}$ for $j=2$ to solve $C_{1}^{\alpha}-C_{2}^{\alpha}$, $\alpha=1,2,\cdots,d$. One can see that this selection is different with that in \cite{bll,bll2}. For simplicity, we denote it in block matrix
\begin{equation*}\label{equ_abc}
AX:=
\begin{pmatrix}
A_{11}&A_{12}\\
A_{21}&A_{22}
\end{pmatrix}
\begin{pmatrix}
X_{1}\\
X_{2}
\end{pmatrix}
=\begin{pmatrix}
B_{1}\\
B_{2}
\end{pmatrix},
\end{equation*}
where 
$$A_{11}:=\begin{pmatrix}
      a_{11}^{\alpha\beta} 
    \end{pmatrix}_{\alpha,\beta=1,2,\cdots,d},
\qquad
A_{12}:=\begin{pmatrix}
      a_{11}^{\alpha\beta}&a_{12}^{\alpha\beta}
    \end{pmatrix}_{\alpha=1,2,\cdots,d;~\beta=d+1,\cdots,\frac{d(d+1)}{2}},\quad
$$
$$ A_{21}:=\begin{pmatrix}
     ~ a_{11}^{\alpha\beta}~ \\\\
      a_{21}^{\alpha\beta}
    \end{pmatrix}_{\alpha=d+1,\cdots,\frac{d(d+1)}{2};~\beta=1,2,\cdots,d},\qquad
 A_{22}:=\begin{pmatrix}
     ~ a_{11}^{\alpha\beta} &a_{12}^{\alpha\beta}~\\\\
      a_{21}^{\alpha\beta}&a_{22}^{\alpha\beta}
    \end{pmatrix}_{\alpha,\beta=d+1,\cdots,\frac{d(d+1)}{2}},
$$

$$X_{1}=\Big(C_{1}^{1}-C_{2}^{1}, C_{1}^{2}-C_{2}^{2},\cdots, C_{1}^{d}-C_{2}^{d}\Big)^{T},$$
$$\quad
X_{2}=\Big(C_{1}^{d+1},\cdots,C_{1}^{\frac{d(d+1)}{2}},C_{2}^{d+1},\cdots,C_{2}^{\frac{d(d+1)}{2}}\Big)^{T},$$
and
$$B_{1}=\Big(b_{1}^{1},b_{1}^{2},\cdots, b_{1}^{d}\Big)^{T},\qquad
B_{2}=\Big(b_{1}^{d+1},\cdots, b_{1}^{\frac{d(d+1)}{2}},b_{2}^{d+1},\cdots, b_{2}^{\frac{d(d+1)}{2}}\Big)^{T}.$$
Since $A$ is positive definition, we can solve $X_{1}$ by Cramer's rule. Then the quantities of $b_{1}^{\beta}$, $\beta=1,2,\cdots,d$, will play a key role to determine whether $C_{1}^{\alpha}-C_{2}^{\alpha}$, $\alpha=1,2,\cdots,d$, equal to zero or not. In next subsection, we will show that $b_{1}^{\beta}$ is convergent to the blow-up factor $b_{*1}^{\beta}$, for $\beta=1,2,\cdots,d$.

\subsection{$b_{1}^{\beta}[\varphi]$ convergent to the blow-up factor $b_{*1}^{\beta}[\varphi]$} \label{subsec2.5} 
Recalling the definitions of $b_{1}^{\beta}[\varphi]$ and $b_{*1}^{\beta}[\varphi]$, \eqref{def_bj} and \eqref{def_bj*}, respectively, we have $b_{1}^{\beta}\rightarrow b_{*1}^{\beta}$, as $\varepsilon\to0$, for $\beta=1,\cdots, d$,

\begin{prop}\label{lemma5.1d=2}
For $d=2,3$, and $\beta=1,2,\cdots,d$, 
\begin{align*}
\Big|b_{1}^{\beta}[\varphi]-b_{*1}^{\beta}[\varphi]\Big|\leq\,C\max\{\varepsilon^{1/3},\rho_{d}(\varepsilon)\}\left(\|\varphi\|_{L^{1}(\partial\Omega)}+|\partial\Omega|\right).
\end{align*}
Consequently, 
\begin{align*}
b_{1}^{\beta}[\varphi]\rightarrow b_{*1}^{\beta}[\varphi],\quad\hbox{as}\ \varepsilon\rightarrow 0,\quad\beta=1,2,\cdots,d.
\end{align*}
\end{prop}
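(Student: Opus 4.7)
Since $b_{1}^{\beta}[\varphi]$ and $b_{*1}^{\beta}[\varphi]$ are flux integrals of the analogous auxiliary Lam\'e solutions $u_{b}$ and $u_{b}^{*}$ on the $\varepsilon$-separated and the touching configurations respectively, the plan is to (i) show that $u_{b}$ converges to $u_{b}^{*}$ (after translating the starred configuration back into $\Omega$) in a topology strong enough for the boundary flux to pass to the limit, and (ii) extract the quantitative rate from energy estimates in the narrow gap combined with the block structure of the linear system \eqref{C1C2_3}.

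First I would identify $u_{b}$ and $u_{b}^{*}$ as solutions of structurally identical Dirichlet problems: $u_{b}$ equals $\sum_{\alpha=1}^{d}C_{2}^{\alpha}\psi^{\alpha}$ on $\partial D_{1}\cup\partial D_{2}$ and $\varphi$ on $\partial\Omega$, while $u_{b}^{*}$ carries the analogous data on the touching configuration with constants $C_{*}^{\alpha}$. Step one is to bound $|C_{2}^{\alpha}-C_{*}^{\alpha}|$ for $\alpha=1,\dots,d(d{+}1)/2$. Using the block decomposition displayed after \eqref{C1C2_3}, I would separate the rows whose diagonal entries $a_{ii}^{\alpha\alpha}$ remain uniformly bounded from those which blow up like $\rho_{d}(\varepsilon)^{-1}$. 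A Cramer's rule calculation, together with the uniform convergence of the bounded matrix entries $a_{ij}^{\alpha\beta}$ to their limit-configuration analogs and with the a priori bounds on $C_{i}^{\alpha}$ from \cite{bll,bll2}, should yield $|C_{2}^{\alpha}-C_{*}^{\alpha}|=O(\max\{\varepsilon^{1/3},\rho_{d}(\varepsilon)\})$.

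Next, after translating $\widetilde{\Omega}^{*}$ so its inner boundaries line up with $\partial D_{i}$ up to an $O(\varepsilon)$ geometric correction, the difference $w=u_{b}-u_{b}^{*}$ (in a common reference frame) satisfies the Lam\'e system away from a thin transition layer with boundary data of the same size as $|C_{2}^{\alpha}-C_{*}^{\alpha}|$. Applying the identity \eqref{equ4-1} with test function $\psi^{\beta}$, whose strain vanishes, the difference $b_{1}^{\beta}-b_{*1}^{\beta}$ reduces to a boundary flux over $\partial\Omega$, controlled via $\|\varphi\|_{L^{1}(\partial\Omega)}+|\partial\Omega|$, plus a transition-layer correction near $P_{1}$, where I would invoke the $C^{1}$ regularity of $u_{b}^{*}$ near the origin provided by \cite{llby} to absorb the geometric discrepancy between $\partial D_{1}$ and $\partial D_{1}^{*}$ into the same rate.

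The main obstacle is the first step: the linear system \eqref{C1C2_3} has a mixed structure in which a few rows carry the singular behavior while the rest are well behaved, and carefully tracking which combinations of the right-hand sides $b_{j}^{\beta}$ are picked up by Cramer's rule for each $\alpha$, as well as how fast these right-hand sides approach their limit counterparts, is where the rate $\max\{\varepsilon^{1/3},\rho_{d}(\varepsilon)\}$ arises. Once this is in hand, the remainder is a routine application of the divergence theorem together with the a priori bounds on $\nabla v^{\alpha}$ and $\nabla v_{0}$ already recorded in \eqref{v0bdd}.
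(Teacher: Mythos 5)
There is a genuine gap, and it sits at the heart of your second step. Testing the identity \eqref{equ4-1} against $\psi^{\beta}$ cannot isolate $b_{1}^{\beta}$: since $e(\psi^{\beta})=0$ and $\mathcal{L}_{\lambda,\mu}u_{b}=0$, integration by parts over $\widetilde{\Omega}$ only gives that the \emph{sum} of the fluxes over $\partial D_{1}$, $\partial D_{2}$ and $\partial\Omega$ vanishes, i.e.\ it controls $b_{1}^{\beta}+b_{2}^{\beta}$, never the single-inclusion flux $b_{1}^{\beta}=\int_{\partial D_{1}}\frac{\partial u_{b}}{\partial\nu}\big|_{+}\cdot\psi^{\beta}$. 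To separate the two inclusions the paper tests against $v_{1}^{\beta}$ (equal to $\psi^{\beta}$ on $\partial D_{1}$ and $0$ on $\partial D_{2}\cup\partial\Omega$), which converts $b_{1}^{\beta,0}$ and $b_{1}^{\beta,\alpha}$ into integrals of $\frac{\partial v_{1}^{\beta}}{\partial\nu}$ over $\partial\Omega$; the whole difficulty then becomes estimating $\nabla(v_{1}^{\beta}-v_{1}^{*\beta})$ on $\partial\Omega$, where $v_{1}^{\beta}$ has a gradient of size $(\varepsilon+|x'|^{2})^{-1}$ in the neck. That comparison is done by a maximum principle on $V\setminus\mathcal{C}_{\varepsilon^{\gamma}}$ with boundary contributions of size $\varepsilon^{1-2\gamma}$ and $\varepsilon^{\gamma}$, optimized at $\gamma=1/3$ — this is precisely where the rate $\varepsilon^{1/3}$ in the statement comes from, and nothing in your sketch produces it; the $C^{1}$ regularity of $u_{b}^{*}$ near the origin cannot absorb it, because the singular object is $v_{1}^{\beta}$, not $u_{b}^{*}$.

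Your first step is also not viable as stated. The constants $C_{2}^{\alpha}$ are not determined by the reduced system \eqref{C1C2_3} (its unknowns are the differences $C_{1}^{\alpha}-C_{2}^{\alpha}$ and the rotational constants), and the relevant block $A_{11}$ has entries $a_{11}^{\alpha\beta}$ that blow up like $\rho_{d}(\varepsilon)^{-1}$, so "uniform convergence of the bounded matrix entries" plus Cramer's rule does not give $|C_{2}^{\alpha}-C_{*}^{\alpha}|$ small. The paper's Lemma \ref{lem3.2} instead symmetrizes the \emph{full} system \eqref{C1C2_0}: the combinations $a^{\alpha\beta}=\sum_{i,j}a_{ij}^{\alpha\beta}$ are bounded (they are energies of $v^{\alpha}=v_{1}^{\alpha}+v_{2}^{\alpha}$, which has bounded gradient), they are shown to be $O(\varepsilon)$-close to the limit matrix $a^{*\alpha\beta}$, which is uniformly positive definite, and the leftover term is controlled by the cancellation $|a_{11}^{\alpha\beta}-a_{22}^{\alpha\beta}|\le C$ (from $\nabla\bar{u}_{1}^{\alpha}=-\nabla\bar{u}_{2}^{\alpha}$ in $\Omega_{R}$) together with the known bound $|C_{1}^{\alpha}-C_{2}^{\alpha}|\le C\rho_{d}(\varepsilon)$. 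This yields $|C_{2}^{\alpha}-C_{*}^{\alpha}|\le C\rho_{d}(\varepsilon)$, which enters the final estimate only through the term $\sum_{\alpha}|C_{2}^{\alpha}-C_{*}^{\alpha}||b_{*1}^{\beta,\alpha}|$; the $\varepsilon^{1/3}$ does not arise there. Without the $v_{1}^{\beta}$-duality and the cylinder-removal comparison, your argument cannot produce the claimed bound.
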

To prove this convergence, similar to \eqref{equ_valpha} and \eqref{v3}, we define their limit cases, respectively, for $\alpha=1,2,\cdots,d$,
\begin{equation}\label{def_v*alpha}
\begin{cases}
\mathcal{L}_{\lambda,\mu}v^{*\alpha}=0,&\mbox{in}~\widetilde{\Omega}^{*},\\
v^{*\alpha}=\psi^{\alpha},&\mbox{on}~\partial{D}_{1}^{*}\cup\partial{D}_{2}^{*},\\
v^{*\alpha}=0,&\mbox{on}~\partial{\Omega}.
\end{cases}
\end{equation}
and
\begin{equation}\label{def_v0*}
\begin{cases}
\mathcal{L}_{\lambda,\mu}v_{0}^{*}=0,&\mbox{in}~\widetilde{\Omega}^{*},\\
v_{0}^{*}=0,&\mbox{on}~\partial{D}_{1}^{*}\cup\partial{D}_{2}^{*},\\
v_{0}^{*}=\varphi,&\mbox{on}~\partial{\Omega}.
\end{cases}
\end{equation}
Then
$$u_{b}^{*}=\sum_{\alpha=1}^{d}C_{*}^{\alpha}v^{*\alpha}+v_{0}^{*}.$$
It follows from theorem 1.1 in \cite{llby} that 
\begin{equation}\label{v0*bdd}
\|\nabla v^{*\alpha}\|_{L^{\infty}(\widetilde{\Omega}^{*})}\leq\,C,\quad\mbox{and}~\|\nabla v_{0}^{*}\|_{L^{\infty}(\widetilde{\Omega}^{*})}\leq\,C.
\end{equation}
So that $|\nabla u_{b}^{*}|$ is also bounded. We shall prove that $u_{b}^{*}$ is actually the limit of $u_{b}$ later. The proof of Proposition \ref{lemma5.1d=2} will be given in the next section.

To complete this section, we fix our notations and list several known results of \cite{bll,bll2} for later use. We use $x=(x', x_{d})$ to denote a point in $\mathbb{R}^{d}$, where $x'=(x_{1},x_{2},\cdots,x_{d-1})$. By a translation and rotation if necessary, we may assume without loss of generality that the points $P_{1}$ and $P_{2}$ in \eqref{P1P2} satisfy
$$P_{1}=\left(0',\frac{\varepsilon}{2}\right)\in\partial{D}_{1},\quad\mbox{and}\quad\,P_{2}=\left(0',-\frac{\varepsilon}{2}\right)\in\partial{D}_{2}.$$
Fix a small universal constant $R$, such that the portion of $\partial{D}_{1}$ and  $\partial{D}_{2}$ near $P_{1}$ and $P_{2}$, respectively, can be represented by
\begin{equation}\label{x3}
x_{d}=\frac{\varepsilon}{2}+h_{1}(x'),\quad\mbox{and}\quad\,x_{d}=-\frac{\varepsilon}{2}+h_{2}(x'), \quad\mbox{for}~~|x'|<2R.
\end{equation}
Then by the smoothness assumptions on $\partial{D}_{1}$ and $\partial{D}_{2}$, the functions $h_{1}(x')$ and $h_{2}(x')$ are of class $C^{2,\gamma}(B_{R}(0'))$,
satisfying
$$\frac{\varepsilon}{2}+h_{1}(x')>-\frac{\varepsilon}{2}+h_{2}(x'),\quad\mbox{for}~~|x'|<2R,$$
\begin{equation}\label{h1h20}
h_{1}(0')=h_{2}(0')=0,\quad\nabla{h}_{1}(0')=\nabla{h}_{2}(0')=0,
\end{equation}
\begin{equation}\label{h1h22}
\nabla^{2}h_{1}(0')\geq\kappa_{0}I,
\quad\,\nabla^{2}h_{2}(0')\leq-\kappa_{0}I,
\end{equation}
and
\begin{equation}\label{h1h2}
\|h_{1}\|_{C^{2,\gamma}(B_{2R}')}+\|h_{2}\|_{C^{2,\gamma}(B_{2R}')}\leq{C}.
\end{equation}
In particular, we only use a weaker relative strict convexity assumption of $\partial{D}_{1}$ and $\partial{D}_{2}$, that is
\begin{equation}\label{h1h23}
h_{1}(x')-h_{2}(x')\geq\kappa_{0}|x'|^{2},\quad\mbox{if}~~|x'|<2R.
\end{equation}

For $0\leq\,r\leq2R$, denote
$$\Omega_{r}:=\left\{~(x',x_{d})\in\mathbb{R}^{d}~\big|~-\frac{\varepsilon}{2}+h_{2}(x')<x_{d}<\frac{\varepsilon}{2}+h_{1}(x'),
~|x'|<r~\right\}.
$$For $|z'|\leq\,2R$, we always use $\delta$ to denote
\begin{equation*}
\delta:=\delta(z')=\varepsilon+h_{1}(z')-h_{2}(z').
\end{equation*}
By \eqref{h1h20}-\eqref{h1h23},
\begin{equation}\label{delta}
\frac{1}{C}\left(\varepsilon+|z'|^{2}\right)\leq\delta(z')\leq\,C\left(\varepsilon+|z'|^{2}\right).
\end{equation}

We now list the following estimates of $|\nabla v_{i}^{\alpha}|$ and $C_{i}^{\alpha}$ from \cite{bll,bll2}.
\begin{lemma}\label{prop_gradient}(\cite{bll,bll2})
Under the hypotheses of Theorem A, and the normalization $\|\varphi\|_{C^{2}(\partial\Omega)}=1$, let $v_{i}^{\alpha}$ and $v_{0}$ be the solution to \eqref{v1alpha} and \eqref{v3}, respectively. Then for $0<\varepsilon<1/2$, we have
\begin{align}
&\big\|\nabla{v}_{0}\big\|_{L^{\infty}(\widetilde{\Omega})}\leq\,C;\label{mainev0}\\
&\big\|\nabla v^{\alpha}\big\|_{L^{\infty}(\widetilde{\Omega})}
\leq\,C,\quad\alpha=1,2,\cdots,d;\label{mainev1+v2}\\
&\frac{1}{C(\varepsilon+|x'|^{2})}\leq\big|\nabla{v}_{i}^{\alpha}(x)\big|
\leq\frac{C}{\varepsilon+|x'|^{2}},\quad\,i=1,2,~~
\alpha=1,2,\cdots,d,~~x\in\widetilde{\Omega};\label{mainev1}\\
&\big|\nabla{v}_{i}^{\alpha}(x)\big|
\leq\,\frac{C|x'|}{\varepsilon+|x'|^{2}}+C,\quad\,i=1,2,
~~\alpha=d+1,\cdots,\frac{d(d+1)}{2},~~x\in\widetilde{\Omega};\label{mainevi3}
\end{align}
and
\begin{equation}\label{maineC}
\big|C_{i}^{\alpha}\big|\leq\,C,\quad
i=1,2,~\alpha=1,2,\cdots,\frac{d(d+1)}{2};
\end{equation}
for $d=2,3$,
\begin{equation}\label{maineC1-C2}
\big|C_{1}^{\alpha}-C_{2}^{\alpha}\big|\leq\,C\rho_{d}(\varepsilon),\quad
\alpha=1,2,\cdots,d.
\end{equation}
\end{lemma}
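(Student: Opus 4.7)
The plan follows the energy iteration scheme from \cite{bll,bll2} adapted to the Lam\'e system; the whole analysis is concentrated in the thin neck $\Omega_R$ around $\overline{P_1P_2}$, since outside this region standard elliptic regularity gives $|\nabla v_i^\alpha| + |\nabla v_0| \leq C$ uniformly. In the neck I would construct explicit auxiliary functions $\bar v_i^\alpha$, $\bar v^\alpha$, $\bar v_0$ matching the required Dirichlet data and interpolating linearly in $x_d$. A direct computation using \eqref{h1h20}--\eqref{h1h23} gives $|\partial_{x_d}\bar v_i^\alpha|\asymp 1/\delta(x')$ for the translation indices $\alpha\leq d$, the extra $|x'|$ factor in \eqref{mainevi3} for the rotation indices $\alpha\geq d+1$ (which comes from the fact that the trace $\psi^\alpha(P_1)-\psi^\alpha(P_2)$ across the gap vanishes to first order at $x'=0$), and uniform bounds for $\bar v^\alpha$ and $\bar v_0$, since in those two problems the boundary traces on $\partial D_1$ and $\partial D_2$ coincide.

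I would then write $v=\bar v + w$, so that $w$ solves a Lam\'e system with source $-\mathcal{L}_{\lambda,\mu}\bar v$ and zero boundary data on the inclusions. A Caccioppoli estimate of the form
\begin{equation*}
\int_{\Omega_t}|\nabla w|^2 \leq \frac{C}{(s-t)^2}\int_{\Omega_s\setminus\Omega_t}|w|^2 + \text{source},\qquad 0<t<s<R,
\end{equation*}
combined with a Korn--Poincar\'e inequality $\int_{\Omega_s}|w|^2 \leq C\,\delta(s)^2\int_{\Omega_s}|e(w)|^2$ whose constant exploits the thinness $\delta(s)\sim\varepsilon+s^2$, yields a geometric iteration controlling $\int_{\Omega_{|x_0'|}}|\nabla w|^2$ sharply. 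Rescaling to a unit cube of side $\sqrt{\delta(x_0')}$ and applying interior Schauder estimates converts this into the pointwise bound $|\nabla w(x_0)|\leq C$, which is of strictly smaller order than $|\nabla\bar v_i^\alpha(x_0)|$ on the segment $\overline{P_1P_2}$. Adding $|\nabla\bar v|$ yields the upper bounds \eqref{mainev0}--\eqref{mainevi3}, while the triangle inequality on $\overline{P_1P_2}$ (where $\partial_{x_d}\bar v_i^\alpha$ has a definite sign of size $1/\delta$) gives the matching lower bound in \eqref{mainev1}.

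For the constants, \eqref{maineC} follows from the global energy identity $\int_{\widetilde\Omega}(\mathbb{C}^0 e(u),e(u))=-\int_{\partial\Omega}\frac{\partial u}{\partial\nu}\big|_+\cdot\varphi$ together with the uniform positive-definiteness of the quadratic form $(C_i^\alpha)\mapsto \sum C_i^\alpha C_j^\beta \int(\mathbb{C}^0 e(v_i^\alpha),e(v_j^\beta))$ on the finite-dimensional span of the $v_i^\alpha$'s. For the sharper jump estimate \eqref{maineC1-C2} I would invoke Cramer's rule on the selected block system \eqref{C1C2_3}: using \eqref{equ4-1} and the two-sided estimate \eqref{mainev1}, the diagonal block $A_{11}=(a_{11}^{\alpha\beta})_{\alpha,\beta\leq d}$ satisfies $A_{11}\asymp \rho_d(\varepsilon)^{-1}M$ with $M$ uniformly positive definite, the dominant contribution coming from the neck integral $\int_{|x'|<R}dx'/(\varepsilon+|x'|^2)\asymp 1/\rho_d(\varepsilon)$; all other entries of $A$ and the right-hand sides $B_1,B_2$ are uniformly bounded by \eqref{mainevi3} and \eqref{mainev0}--\eqref{mainev1+v2}, so $|X_1|=|A_{11}^{-1}(B_1-A_{12}X_2)|\leq C\rho_d(\varepsilon)$. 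The chief technical obstacle is establishing a Korn-type inequality in the thin neck with the correct $\varepsilon$-dependence so that the Caccioppoli iteration closes for the vector-valued Lam\'e system --- this is where the analysis genuinely departs from the scalar conductivity argument of \cite{bly1}.
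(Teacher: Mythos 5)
The lemma is cited from \cite{bll,bll2} and the present paper does not reprove items \eqref{mainev0}--\eqref{maineC}; your outline (localize to the neck, match boundary data with the piecewise-linear auxiliary vectors $\bar u\,\psi^\alpha$, subtract, run a Caccioppoli/Korn--Poincar\'e energy iteration exploiting the width $\delta(x')\sim\varepsilon+|x'|^2$, then rescale and apply Schauder) is exactly the method of those references, and the final triangle-inequality step recovers the lower bound in \eqref{mainev1} along $\overline{P_1P_2}$ as needed. For \eqref{maineC1-C2} your Cramer / block-system argument is precisely the paper's alternate proof given in Propositions \ref{prop4.1} and \ref{prop5.1}: one needs that the off-diagonal entries $a_{11}^{\alpha\beta}$ ($\alpha\neq\beta$) of $A_{11}$ are $o(\rho_d(\varepsilon)^{-1})$, which is the content of \eqref{a11_12d=2} (even the cruder $O(\varepsilon^{-1/4})$ bound from \cite{bll} suffices), and that the rotational block $A_{22}$ is uniformly positive definite, which is Lemma \ref{lem4.4} / Lemma \ref{lem5.2}; you assert rather than prove these, but they are exactly the two supporting facts the paper isolates. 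One small caveat: your lower bound only comes out on (a neighborhood of) $\overline{P_1P_2}$, where $\partial_{x_d}\bar u$ dominates the correction $|\nabla w|$; for $|x'|$ of order one the argument would need a separate nondegeneracy observation, and in fact the source references state the two-sided estimate in the neck $\Omega_R$, so your restriction is harmless.
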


\begin{remark}
Estimate \eqref{maineC1-C2} can also be proved in Proposition \ref{prop4.1} and Proposition \ref{prop5.1} below in a different way. It tells us that as $\varepsilon\rightarrow0$, in dimensions two and three the difference $|C_{1}^{\alpha}-C_{2}^{\alpha}\big|\rightarrow0$, $\alpha=1,2,\cdots,d$,  which allows us to prove that $b_{1}^{\beta}[\varphi]$ can be convergent to the blow-up factor $b_{*1}^{\beta}[\varphi]$. But in higher dimensions $d\geq4$, so far we do not know whether $|C_{1}^{\alpha}-C_{2}^{\alpha}|$ tends to $0$ or not as $\varepsilon\rightarrow 0$. For more details, see the proof of Lemma \ref{lem3.2} in Section \ref{sec3}, where we prove that $C_{1}^{\alpha}$ and $C_{2}^{\alpha}$ have the same limit $C_{*}^{\alpha}$, for $\alpha=1,2,\cdots,d$. 
\end{remark}

\bigskip

\section{Proof of Proposition \ref{lemma5.1d=2}}\label{sec3}

We introduce a scalar auxiliary function $\bar{u}\in{C}^{2}(\mathbb{R}^{d})$ as before, such that $\bar{u}=1$ on
$\partial{D}_{1}$, $\bar{u}=0$ on
$\partial{D}_{2}\cup\partial\Omega$,
\begin{align}\label{ubar}
\bar{u}(x)
=\frac{x_{d}-h_{2}(x')+\frac{\varepsilon}{2}}{\varepsilon+h_{1}(x')-h_{2}(x')},\quad\mbox{in}~~\Omega_{2R},
\end{align}
and
\begin{equation*}\label{nablau_bar_outside}
\|\bar{u}\|_{C^{2}(\mathbb{R}^{d}\setminus\Omega_{R})}\leq\,C.
\end{equation*}
We use $\bar{u}$ to define vector-value auxiliary functions
\begin{equation*}\label{def:ubar1112}
\bar{u}_{1}^{\alpha}=\bar{u}\psi^{\alpha},\quad\alpha=1,2,\cdots,d,
\quad\,\mbox{in}~~\widetilde{\Omega}.
\end{equation*}
Thus, $\bar{u}_{1}^{\alpha}=v_{1}^{\alpha}$ on $\partial\widetilde{\Omega}$. Similarly, we define
\begin{equation*}\label{def:ubar2122}
\bar{u}_{2}^{\alpha}=\underline{u}\psi^{\alpha},\quad\alpha=1,2,\cdots,d,
\quad\,\mbox{in}~~\widetilde{\Omega},
\end{equation*}
such that $\bar{u}_{2}^{\alpha}=v_{2}^{\alpha}$ on $\partial\widetilde{\Omega}$, where $\underline{u}$ is a scalar function in ${C}^{2}(\mathbb{R}^{d})$ satisfying $\underline{u}=1$ on
$\partial{D}_{2}$, $\underline{u}=0$ on
$\partial{D}_{1}\cup\partial\Omega$, $\underline{u}(x)
=1-\bar{u}$, in $\Omega_{2R}$, and
$\|\underline{u}\|_{C^{2}(\mathbb{R}^{d}\setminus\Omega_{R})}\leq\,C$.

A direct calculation gives, in view of \eqref{h1h20}-\eqref{h1h23}, that
\begin{equation}\label{nablau_bar}
|\partial_{x_{k}}\bar{u}(x)|\leq\frac{C|x_{k}|}{\varepsilon+|x'|^{2}},~~k=1,2,\cdots,d-1,\quad
\partial_{x_{d}}\bar{u}(x)=\frac{1}{\delta(x')},\quad~x\in\Omega_{R}.
\end{equation}
Thus, for $i=1,2$, $\alpha=1,2,\cdots,d$,
\begin{equation}\label{nabla_ubar}
|\nabla_{x'}\bar{u}_{i}^{\alpha}(x)|\leq\frac{C}{\sqrt{\varepsilon+|x'|^{2}}},\quad\mbox{and}~\frac{1}{C(\varepsilon+|x'|^{2})}\leq\partial_{x_{d}}\bar{u}_{i}^{\alpha}(x)\leq\frac{C}{\varepsilon+|x'|^{2}},\quad~x\in\Omega_{R}.
\end{equation}

We need the following Lemma to prove Proposition \ref{lemma5.1d=2}.
\begin{lemma}\label{prop1}(\cite{bll,bll2})
Assume the above, let $v_{i}^{\alpha}\in{H}^1(\widetilde{\Omega};\mathbb{R}^{d})$ be the
weak solution of \eqref{v1alpha} with $\alpha=1,2,\cdots,d$. Then for $i=1,2,~\alpha=1,2,\cdots,d$, 
\begin{equation}\label{nabla_w_ialpha}
\left|\nabla(v_{i}^{\alpha}-\bar{u}_{i}^{\alpha})(x)\right|\leq
\begin{cases}
\displaystyle \frac{C}{\sqrt{\varepsilon}},&|x'|\leq\sqrt{\varepsilon},\\
\displaystyle \frac{C}{|x'|},&\sqrt{\varepsilon}<|x'|\leq\,R,
\end{cases}\quad\forall~~x\in\Omega_{R},
\end{equation}
and
\begin{equation}\label{nabla_w_out}
\|\nabla(v_{i}^{\alpha}-\bar{u}_{i}^{\alpha})\|_{L^{\infty}(\widetilde{\Omega}\setminus\Omega_{R})}\leq\,C.
\end{equation}
\end{lemma}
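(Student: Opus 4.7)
The natural first step is to work with the difference $w_i^\alpha := v_i^\alpha - \bar{u}_i^\alpha$. Since $\bar{u}_i^\alpha$ matches $v_i^\alpha$ on $\partial\widetilde{\Omega}$, the field $w_i^\alpha \in H^1(\widetilde{\Omega};\mathbb{R}^d)$ has zero trace and satisfies
\begin{equation*}
\mathcal{L}_{\lambda,\mu} w_i^\alpha = -\mathcal{L}_{\lambda,\mu} \bar{u}_i^\alpha =: g \qquad \mbox{in } \widetilde{\Omega}.
\end{equation*}
For $\alpha = 1,\ldots,d$, $\psi^\alpha = e_\alpha$ is a constant vector, so $\bar{u}_i^\alpha$ is a scalar (either $\bar{u}$ or $\underline{u}$) times a fixed vector, and differentiating the explicit formula \eqref{ubar} with the help of \eqref{h1h20}--\eqref{h1h2} yields the pointwise control
\begin{equation*}
|g(x)| \leq \frac{C}{\delta(x')}\ \mbox{in}\ \Omega_R, \qquad \|g\|_{L^\infty(\widetilde{\Omega}\setminus\Omega_R)} \leq C.
\end{equation*}
Outside the strip, on $\widetilde{\Omega}\setminus\Omega_R$, the Lam\'e operator is uniformly elliptic on a domain sitting at a positive distance from the narrow gap, so \eqref{nabla_w_out} follows from standard interior and boundary Schauder estimates applied to $w_i^\alpha$, once one records the global energy bound $\|w_i^\alpha\|_{H^{1}(\widetilde{\Omega})}\leq C$ obtained by testing the equation against $w_i^\alpha$ itself.

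Inside the narrow strip the plan is to run the iterative energy method of \cite{bll,bll2}. For $|z'|<R$ and $0<s<t\leq R$, let $\Omega_t(z') := \{|x'-z'|<t\}\cap\widetilde{\Omega}$, and pick a horizontal cutoff $\eta(x')$ supported in $\{|x'-z'|<t\}$, equal to $1$ on $\{|x'-z'|<s\}$, with $|\nabla\eta|\leq C/(t-s)$. Testing the equation against $\eta^{2}w_i^\alpha$ and invoking Korn's inequality (available because $w_i^\alpha$ vanishes on both the top and bottom face of the slab, removing the rigid-motion obstruction) gives the Caccioppoli-type bound
\begin{equation*}
\int_{\Omega_s(z')}|\nabla w_i^\alpha|^{2}\,dx \leq \frac{C}{(t-s)^{2}}\int_{\Omega_t(z')}|w_i^\alpha|^{2}\,dx + C\int_{\Omega_t(z')}|g|\,|w_i^\alpha|\,dx.
\end{equation*}
Combined with the one-dimensional Poincar\'e estimate $|w_i^\alpha(x',x_d)|^{2} \leq \delta(x')^{2}\int|\partial_{x_d}w_i^\alpha|^{2}\,dx_d$ (the trace vanishes at both endpoints of each vertical fibre) and the comparison $\delta(x')\asymp \delta(z')+(t+|z'|)^{2}$ on $\Omega_t(z')$, this converts into a self-improving inequality for $F(t):=\int_{\Omega_t(z')}|\nabla w_i^\alpha|^{2}\,dx$, which after iteration along a geometric sequence of radii running from $R$ down to $\delta(z')$ produces the optimal localized bound
\begin{equation*}
\int_{\Omega_\delta(z')}|\nabla w_i^\alpha|^{2}\,dx \leq C\,\delta^{d-1}, \qquad \delta=\delta(z').
\end{equation*}

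To upgrade this $L^{2}$-average bound to a pointwise estimate, rescale: set $\tilde{w}(y',y_d) := \delta^{-1/2}\,w_i^\alpha(z'+\delta y',\delta y_d)$. Then $\tilde{w}$ solves a uniformly elliptic Lam\'e system on a domain of unit size whose top and bottom boundaries are $C^{2,\gamma}$ graphs with norms uniform in $\varepsilon$, and both $\|\tilde{w}\|_{H^{1}}$ and the rescaled right-hand side are uniformly bounded. Classical $C^{1,\gamma}$ regularity for the Lam\'e system then gives $|\nabla \tilde{w}(0)|\leq C$, which unscales to $|\nabla w_i^\alpha(z',z_d)|\leq C/\sqrt{\delta(z')}$; inserting $\delta(z')\asymp \varepsilon+|z'|^{2}$ yields the two-piece bound \eqref{nabla_w_ialpha}. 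The main technical obstacle is the iteration step in the second paragraph: the multiplier produced by combining Caccioppoli with the vertical Poincar\'e inequality is of order $(\delta(z')+t^{2})^{2}/(t-s)^{2}$, which is $O(1)$ only in a narrow window around $t\sim\max(\sqrt{\delta},t)$, so a single application of the energy inequality is not enough and the geometric sequence has to be chosen so that the bad multiplier in front of $F(t)$ is absorbed before the forcing contribution is added at each stage.
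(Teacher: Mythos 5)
Your overall strategy---subtract the auxiliary field, run an iterated Caccioppoli/energy estimate on the slabs $\Omega_t(z')$, then rescale to unit size and invoke Schauder---is exactly the route taken in \cite{bll,bll2}, which is what the paper cites here. However, there is a genuine computational error in the forcing estimate that propagates into the iteration. You claim $|g|\le C/\delta(x')$ in $\Omega_R$, which is the correct bound for the scalar Laplacian but not for the Lam\'e operator acting on $\bar{u}_i^\alpha=\bar{u}\psi^\alpha$. Writing out
$(\mathcal{L}_{\lambda,\mu}\bar{u}_1^\alpha)_k=\mu\delta_{\alpha k}\Delta\bar{u}+(\lambda+\mu)\partial_{x_k}\partial_{x_\alpha}\bar{u}$
and using $\partial_{x_d}\bar{u}=1/\delta$, one finds a cross-derivative term $\partial_{x_k}\partial_{x_d}\bar{u}=-\partial_{x_k}\delta/\delta^2$ whose size is $\sim|x'|/\delta^2$, which is \emph{not} controlled by $C/\delta$ in any part of $\Omega_R$ with $|x'|\gg\varepsilon$. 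The true bound is $|g|\le C\left(|x'|/\delta^2+1/\delta\right)$. This matters because your forcing contribution $\int_{\Omega_t(z')}|g|\,|w|$ is a factor roughly $(|z'|^2+t^2)/\delta(z')$ larger than what your stated bound gives; fortunately, after pairing with the vertical Poincar\'e inequality $|w|\lesssim\delta\,|\partial_{x_d}w|$, the product $|x'|/\delta$ stays $O(1)$ on the slab since $|x'|^2\lesssim\delta$, and the iteration still closes --- but this cancellation has to be invoked explicitly; with your stated $|g|$-bound the argument is simply reading off a false inequality. (Your global energy bound $\|\nabla w_i^\alpha\|_{L^2(\widetilde\Omega)}\le C$ is fine once the same vertical Poincar\'e trick is used in the pairing $\int g\cdot w$; a naive $L^\infty$ bound on $w$ would \emph{not} suffice in $d=2$.) With the corrected $|g|$ bound and a careful write-out of the geometric iteration, your proof matches \cite{bll,bll2}.
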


Let $u^{*}$ be the solution of the following Dirichlet boundary value problem:
\begin{align}\label{u00*}
\begin{cases}
  \mathcal{L}_{\lambda,\mu}u^{*}=0,\quad&
\hbox{in}\  \widetilde{\Omega}^{*},  \\
u^{*}=\sum\limits_{\alpha=1}^{d(d+1)/2}C_{*}^{\alpha}\psi^{\alpha},\ &\hbox{on}\ \partial{D}_{1}^{*}\cup\partial{D}_{2}^{*},\\
\int_{\partial{D}_{1}^{*}}\frac{\partial{u}^{*}}{\partial\nu}\big|_{+}\cdot\psi^{\beta}+\int_{\partial{D}_{2}^{*}}\frac{\partial{u}^{*}}{\partial\nu}\big|_{+}\cdot\psi^{\beta}=0,&\beta=1,2,\cdots,\frac{d(d+1)}{2},\\
u^{*}=\varphi(x),&\hbox{on} \ \partial{D},
\end{cases}
\end{align}
where $C_{*}^{\alpha}$, $\alpha=1,2,\cdots,d(d+1)/2$, are uniquely determined by minimizing the energy 
\begin{equation*}
\int_{\widetilde{\Omega}^{*}}\left(\mathbb{C}^{(0)}
e(v),e(v)\right)dx
\end{equation*}
in an admission function space $$
\mathcal{A}:=\left\{v\in{H}^{1}(\Omega; \mathbb{R}^{d})
~\big|~e(v)=0~~\mbox{in}~{D}^{*}_{1}\cup{D}^{*}_{2},~\mbox{and}~v=\varphi~\mbox{on}~\partial{\Omega}\right\},$$
and $u^{*}$ is the limit of $u$ in the sense of variation.
If $\varphi=0$, then by a variational argument, there is only trivial solution $u^{*}\equiv0$ for \eqref{u00*}. Hence, $C_{*}^{\alpha}\equiv0$, $\alpha=1,2,\cdots,d(d+1)/2$. Generally, if $\varphi\neq0$, we have

\begin{lemma}\label{lem3.2} For $d=2,3$, under the hypotheses of Theorem A, and the normalization $\|\varphi\|_{C^{2}(\partial\Omega)}=1$, let $C_{i}^{\alpha}$ and $C_{*}^{\alpha}$ be defined in \eqref{decom_u} and \eqref{u00*}, respectively. Then
$$\left|\frac{1}{2}(C_{1}^{\alpha}+C_{2}^{\alpha})-C_{*}^{\alpha}\right|\leq\,C\rho_{d}(\varepsilon),\quad\alpha=1,2,\cdots,d.$$
Consequently, in view of \eqref{maineC1-C2},
\begin{equation}\label{C*-Ci}
|C_{*}^{\alpha}-C_{i}^{\alpha}|=\frac{1}{2}|C_{1}^{\alpha}-C_{2}^{\alpha}|+C\rho_{d}(\varepsilon)\leq\,C\rho_{d}(\varepsilon),\quad\,i=1,2,~~\alpha=1,2,\cdots,d.
\end{equation}
\end{lemma}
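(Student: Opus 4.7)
The plan is to derive a linear system for $S^\alpha := \frac{1}{2}(C_1^\alpha + C_2^\alpha)$, $\alpha = 1,\ldots,d$, whose coefficients and right-hand side differ by $O(\rho_d(\varepsilon))$ from those of an analogous system for $C_*^\alpha$, and then invert the (positive-definite) limit system. The crucial observation is that although the individual quantities $a_{ij}^{\alpha\beta}$ for $\alpha \leq d$ blow up as $\varepsilon \to 0$, the symmetric combination $\sum_{i,j} a_{ij}^{\alpha\beta}$ remains bounded because it reduces to a boundary integral on $\partial\Omega$ alone.

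First I would sum the identities $\sum_\alpha(C_1^\alpha a_{1j}^{\alpha\beta} + C_2^\alpha a_{2j}^{\alpha\beta}) = \int_{\partial D_j}\frac{\partial v_0}{\partial \nu}|_+\cdot \psi^\beta$ (equivalent to \eqref{C1C2_3}) over $j=1,2$ for each $\beta = 1,\ldots,d$. Writing $C_i^\alpha = S^\alpha \pm D^\alpha$ for $\alpha \leq d$ with $D^\alpha := (C_1^\alpha - C_2^\alpha)/2$, the result takes the form
\begin{equation*}
\sum_{\alpha=1}^d S^\alpha M^{\alpha\beta} + \sum_{\alpha=1}^d D^\alpha N^{\alpha\beta} + \sum_{\substack{i=1,2\\ \alpha > d}} C_i^\alpha K_i^{\alpha\beta} = G^\beta,
\end{equation*}
where $M^{\alpha\beta} = \sum_{i,j}a_{ij}^{\alpha\beta}$ and $N,K,G$ are analogous sums. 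Integration by parts of $\mathcal L_{\lambda,\mu}v^\alpha = 0$ (or $\mathcal L_{\lambda,\mu}v_i^\alpha = 0$) against the rigid displacement $\psi^\beta$ on $\widetilde\Omega$ rewrites each of $M, N, K, G$ as a single boundary integral over $\partial\Omega$; for instance, $M^{\alpha\beta} = -\int_{\partial\Omega}\frac{\partial v^\alpha}{\partial \nu}|_+\cdot\psi^\beta$. By \eqref{mainev0}--\eqref{mainev1+v2} these are all $O(1)$. Combined with $|D^\alpha| \leq C\rho_d(\varepsilon)$ from \eqref{maineC1-C2} and $|C_i^\alpha| \leq C$ from \eqref{maineC}, this gives
\begin{equation*}
\sum_{\alpha=1}^d S^\alpha M^{\alpha\beta} = \widetilde G^\beta + O(\rho_d(\varepsilon)), \qquad \beta = 1,\ldots,d.
\end{equation*}

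Applying exactly the same procedure to the limit problem \eqref{u00*} --- using $u_b^* = \sum_\alpha C_*^\alpha v^{*\alpha} + v_0^*$ together with the flux-sum condition $\sum_j \int_{\partial D_j^*}\frac{\partial u^*}{\partial\nu}|_+\cdot\psi^\beta = 0$ --- yields an analogous closed system $\sum_{\alpha=1}^d C_*^\alpha M_*^{\alpha\beta} = \widetilde G_*^\beta$ with $M_*^{\alpha\beta} = -\int_{\partial\Omega}\frac{\partial v^{*\alpha}}{\partial\nu}|_+\cdot\psi^\beta$. The matrix $M_*$ is positive definite: for $\xi \neq 0$ the vector field $v := \sum_\alpha \xi_\alpha v^{*\alpha}$ is nontrivial and vanishes on $\partial\Omega$, so $\sum_{\alpha,\beta}\xi_\alpha\xi_\beta M_*^{\alpha\beta}$ equals the strictly positive Dirichlet energy $\int_{\widetilde\Omega^*}(\mathbb C^0 e(v), e(v))$. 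Subtracting the two systems and inverting $M_*$ then gives $|S^\alpha - C_*^\alpha| \leq C\rho_d(\varepsilon)$, and the second assertion \eqref{C*-Ci} follows from the triangle inequality combined with \eqref{maineC1-C2}.

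The hard part will be to quantify the rate $|M^{\alpha\beta} - M_*^{\alpha\beta}| + |\widetilde G^\beta - \widetilde G_*^\beta| \leq C\rho_d(\varepsilon)$. Since all of these are boundary integrals over $\partial\Omega$, standard interior elliptic regularity reduces the task to controlling $\|v^\alpha - v^{*\alpha}\|_{H^1}$ (and $\|v_0 - v_0^*\|_{H^1}$) on a fixed subdomain of $\widetilde\Omega \cap \widetilde\Omega^*$ well away from the neck. An energy-comparison argument, constructed from suitable interpolations of the boundary data on the (slightly mismatched) inclusion surfaces via the auxiliary function $\bar u$ from \eqref{ubar}, reduces this in turn to bounding the neck-contribution, which is of order $\rho_d(\varepsilon)$ by the pointwise and energy estimates \eqref{mainestimates} of \cite{bll,bll2}.
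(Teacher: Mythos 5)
Your overall skeleton (symmetrize the flux equations so that the averages $\tfrac12(C_1^\alpha+C_2^\alpha)$ appear against the bounded coefficients $\sum_{i,j}a_{ij}^{\alpha\beta}$, rewrite these as boundary integrals over $\partial\Omega$, compare with the limit system, and invert the positive definite limit matrix) is the same as the paper's. The genuine gap is in how you dispose of the rotational indices $\alpha=d+1,\dots,\tfrac{d(d+1)}{2}$. You move $\sum_{i}\sum_{\alpha>d}C_i^\alpha K_i^{\alpha\beta}$ into the right-hand side $\widetilde G^\beta$, while the limit right-hand side $\widetilde G_*^\beta$ necessarily contains $\sum_{\alpha>d}C_*^\alpha M_*^{\alpha\beta}$ (the limit solution in \eqref{u00*} carries all $\tfrac{d(d+1)}{2}$ constants, and the coefficients $K_i^{\alpha\beta}$, being $O(1)$ boundary integrals over $\partial\Omega$, do not vanish as $\varepsilon\to0$). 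Hence your key estimate $|\widetilde G^\beta-\widetilde G_*^\beta|\leq C\rho_d(\varepsilon)$ cannot be reduced to comparing $v^\alpha$ with $v^{*\alpha}$ and $v_0$ with $v_0^*$: it also requires $|C_i^\alpha-C_*^\alpha|\leq C\rho_d(\varepsilon)$ for the rotational components $\alpha>d$, a statement of exactly the same nature as the lemma being proved, and one that is supplied neither by \eqref{maineC} (which gives only $|C_i^\alpha|\leq C$) nor by \eqref{maineC1-C2} (which is restricted to $\alpha\leq d$). As written, the subtraction step fails (or is circular). The paper avoids this by never eliminating the rotational constants: it keeps the full $\tfrac{d(d+1)}{2}$-dimensional symmetrized system \eqref{C1+C2_1}, with all averages $\tfrac12(C_1^\alpha+C_2^\alpha)$ as unknowns, and compares it with the full limit system \eqref{equ_C*alpha}, inverting the full positive definite matrix $(a^{*\alpha\beta})$; then only difference terms $(C_1^\alpha-C_2^\alpha)$ paired with bounded coefficients remain to be controlled (its Step 4).

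A secondary, more repairable, weakness is your plan for $|M^{\alpha\beta}-M_*^{\alpha\beta}|$: an ``energy comparison reducing to a neck contribution of order $\rho_d(\varepsilon)$ by \eqref{mainestimates}'' is vague, since \eqref{mainestimates} concerns $u$, not $v^\alpha-v^{*\alpha}$, and the domains $\widetilde\Omega$ and $\widetilde\Omega^{*}$ do not coincide. The paper instead proves the pointwise bound $|v^\beta-v^{*\beta}|\leq C\varepsilon$ on $V=\Omega\setminus\overline{D_1\cup D_1^*\cup D_2\cup D_2^*}$ by the maximum principle for the Lam\'{e} system, using the mean value theorem on the mismatched boundary pieces together with the crucial fact (theorem 1.1 of \cite{llby}) that $\nabla v^\beta$ and $\nabla v^{*\beta}$ are bounded because the Dirichlet data are the same rigid motion on both inclusions, and then standard boundary gradient estimates on $\partial\Omega$; this yields the stronger rate $C\varepsilon$ for $|a^{\alpha\beta}-a^{*\alpha\beta}|$ and $|\tilde b^{\beta}-\tilde b_*^{\beta}|\le C\rho_d(\varepsilon)$. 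If you adopt the paper's full-system formulation, this maximum-principle comparison is the quantitative ingredient you still need to supply.
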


The proof will be given later. We first use it to prove Proposition \ref{lemma5.1d=2}.

\begin{proof}[Proof of Proposition \ref{lemma5.1d=2}]

We here prove the case $\beta=1$ for instance. The  other cases are the same. Set
\begin{align}\label{b11decom}
b_{1}^{1}=\int_{\partial{D}_{1}}\frac{\partial u_{b}}{\partial \nu}\Big|_{+}\cdot \psi^{1}=&\int_{\partial{D}_{1}}\frac{\partial v_0}{\partial \nu}\Big|_{+}\cdot \psi^{1}+\sum_{\alpha=1}^{d}C_{2}^{\alpha}\int_{\partial{D}_{1}}\frac{\partial v^{\alpha}}{\partial \nu}\Big|_{+}\cdot \psi^{1}\nonumber\\
:=&b_{1}^{1,0}+\sum_{\alpha=1}^{d}C_{2}^{\alpha}b_{1}^{1,\alpha}.
\end{align}

{\bf STEP 1.} First, for $b_{1}^{1,0}$. It follows from the definitions of $v_0$ and $v_{1}^{1}$ and the integration by parts formula (\ref{equ4-1}) that
\begin{align*}
b_1^{1,0}&=\int_{\partial{D}_{1}}\frac{\partial v_0}{\partial \nu}\Big|_{+}\cdot v_{1}^{1}=\int_{\widetilde{\Omega}}\left(\mathbb{C}^0e(v_{1}^{1}), e(v_0)\right)=\int_{\partial\Omega}\frac{\partial v_{1}^{1}}{\partial \nu}\Big|_{+}\cdot \varphi.\\
\end{align*}
Similarly,
\begin{align*}
b_{*1}^{1,0}&:=\int_{\partial{D}_{1}^{*}}\frac{\partial v_{0}^{*}}{\partial \nu}\Big|_{+}\cdot\psi^{1}=\int_{\partial\Omega}\frac{\partial v_{1}^{*1}}{\partial \nu}\Big|_{+}\cdot \varphi,\\
\end{align*}
where $v_{1}^{*1}$ satisfies
\begin{align}\label{u1*}
\begin{cases}
  \mathcal{L}_{\lambda,\mu}v_{1}^{*1}=0,\quad&
\hbox{in}\  \widetilde\Omega^{*},  \\
v_{1}^{*1}=\psi^{1},\ &\hbox{on}\ \partial{D}_{1}^{*}\setminus\{0\},\\
v_{1}^{*1}=0,&\hbox{on} \ \partial{D}_{2}^{*}\cup\partial\Omega.
\end{cases}
\end{align}
Thus,
\begin{equation}\label{b10}
b_{1}^{1,0}-b_{1}^{*1,0}=\int_{\partial\Omega}\frac{\partial (v_{1}^{1}-v_{1}^{*1})}{\partial \nu}\Big|_{+}\cdot \varphi.
\end{equation}
It suffices to estimate $|\nabla(v_{1}^{1}-v_{1}^{*1})$ on the boundary $\partial\Omega$.

{\bf STEP 1.1.} In order to estimate the difference $v_{1}^{1}-v_{1}^{*1}$, we introduce two auxiliary functions 
\begin{gather*} \bar{u}_{1}^{1}=\begin{pmatrix}
~\bar{u}~\\
0\\
\vdots\\
0\end{pmatrix},\qquad\mbox{and}\quad~~ \bar{u}_{1}^{*1}=\begin{pmatrix}
\bar{u}^*\\
0\\
\vdots\\
0\end{pmatrix},
\end{gather*}
where $\bar{u}$ is defined by \eqref{ubar}, and $\bar{u}^*$ satisfies $\bar{u}^*=1$ on $\partial{D}_{1}^{*}\setminus\{0\}$, $\bar{u}^*=0$ on $\partial{D}_{2}^*\cup\partial\Omega$, and
\begin{align*}
\bar{u}^*=\frac{x_{d}-h_{2}(x')}{h_1(x')-h_{2}(x')},\quad\hbox{on}\ \Omega_{R}^{*},\qquad\|\bar{u}^*\|_{C^{2}(\widetilde\Omega^{*}\setminus\Omega_{R/2}^{*})}\leq\,C,
\end{align*}
where $\Omega^{*}_{r}:=\left\{~x\in\widetilde\Omega^{*}~\big|~h_{2}(x')<x_{d}<h_{1}(x'),
~ |x'|<r~\right\}$, for $r<R$. Similar to \eqref{nabla_ubar}, we have
\begin{equation}\label{nabla_ubar11*}
|\nabla_{x'}\bar{u}_{1}^{*1}(x)|\leq\frac{C}{|x'|},\quad\mbox{and}\quad~~~\frac{1}{C|x'|^{2}}\leq\partial_{x_{d}}\bar{u}_{1}^{*1}(x)\leq\frac{C}{|x'|^{2}},\quad~x\in\Omega_{R}^{*}.
\end{equation}
By making use of (\ref{h1h20}), (\ref{h1h22}), and \eqref{delta}, we obtain, for $x\in\Omega_{R}^{*}\setminus\{0\}$,
$$\left|~\nabla_{x'}\Big((\bar{u}_{1}^{1})_{1}-(\bar{u}_{1}^{*1})_{1}\Big)~\right|=\left|\nabla_{x'}(\bar{u}-\bar{u}^{*})\right|\leq\frac{C}{|x'|},
$$
and
\begin{align}\label{estimate1a_d=2}
\left|\partial_{x_{d}}\Big((\bar{u}_{1}^{1})_{1}-(\bar{u}_{1}^{*1})_{1}\Big)\right|&=\Big|\frac{1}{\varepsilon+h_1(x')-h_{2}(x')}-\frac{1}{h_1(x')-h_{2}(x')}\Big|\nonumber\\
&\leq\frac{C\varepsilon}{|x'|^2(\varepsilon+|x'|^{2})}.
\end{align}

Applying Lemma \ref{prop1} to (\ref{u1*}) leads to
\begin{align}\label{u1*-tildeu1*_d=2}
|\nabla (v_{1}^{*1}-\bar{u}_{1}^{*1})(x)|\leq \frac{C}{|x'|},\quad   x\in\Omega_{R}^{*};
\end{align}
and in view of \eqref{nabla_ubar11*},
\begin{align}\label{u-1*}
|\nabla_{x'}v_{1}^{*1}(x)|\leq \frac{C}{|x'|},\quad |\partial_{x_{d}}v_{1}^{*1}(x)|\leq \frac{C}{|x'|^2},\quad x\in\Omega_{R}^{*}.
\end{align}

{\bf STEP 1.2.} Next, we estimate the difference $v_{1}^{1}-v_{1}^{*1}$. Notice that $v_{1}^{1}-v_{1}^{*1}$ satisfies
\begin{align*}
\begin{cases}
  \mathcal{L}_{\lambda,\mu}(v_{1}^{1}-v_{1}^{*1})=0,\quad&
\hbox{in}\   V:=\Omega\setminus(\overline{D_{1}\cup D_{1}^{*}\cup D_{2}\cup D_{2}^{*}}),  \\
v_{1}^{1}-v_{1}^{*1}=\psi^{1}-v_{1}^{*1},\ &\hbox{on}\ \partial{D}_{i} \setminus D_{i}^{*},~i=1,2,\\
v_{1}^{1}-v_{1}^{*1}=v_{1}^{1}-\psi^{1},&\hbox{on} \ \partial{D}_{i}^{*}\setminus (D_{i}\cup\{0\}),~i=1,2,\\
v_{1}^{1}-v_{1}^{*1}=0,&\hbox{on} \ \partial\Omega.
\end{cases}
\end{align*}

Define a cylinder
$$\mathcal{C}_{r}:=\left\{x\in\mathbb{R}^{d}~\big|~|x'|<r,-\frac{\varepsilon}{2}+2\min_{|x'|=r}h_{2}(x')\leq\,x_{d}\leq\frac{\varepsilon}{2}+2\max_{|x'|=r}h_{1}(x')\right\},$$
for $r<R$. 
We first estimate $|v_{1}^{1}-v_{1}^{*1}|$ on $\partial(D_{1}\cup{D}_{1}^{*})\setminus\mathcal{C}_{\varepsilon^{\gamma}}$, where $0<\gamma<1/2$ to be determined later. For $\varepsilon$ sufficiently small, in view of the definition of $v_{1}^{*1}$,
\begin{align*}
|\partial_{x_{d}}v_{1}^{*1}(x)|\leq C,\quad  x\in\widetilde\Omega^{*}\setminus\Omega_{R}^{*}.
\end{align*}
By using mean value theorem, we have, for $x\in \partial{D}_{1} \setminus D_{1}^{*}$, 
\begin{align}\label{boundary1_d=2}
|(v_{1}^{1}-v_{1}^{*1})(x', x_{d})|&=|(\psi^{1}-v_{1}^{*1})(x', x_{d})|\nonumber\\
&=|v_{1}^{*1}(x',x_{d}-\varepsilon)-v_{1}^{*1}(x', x_{d})|\leq C\varepsilon.
\end{align}
For $x\in\partial{D}_{1}^{*}\setminus(D_{1}\cup \mathcal{C}_{\varepsilon^{\gamma}})$, using mean value theorem again and  (\ref{mainev1}),
\begin{align}\label{boundary2_d=2}
|(v_{1}^{1}-v_{1}^{*1})(x', x_{d})|&=|v_{1}^{1}(x',x_{d})-v_{1}^{1}(x', x_{d}+\varepsilon)|\nonumber\\
&\leq \frac{C\varepsilon}{\varepsilon+|x'|^2}\leq C\varepsilon^{1-2\gamma}.
\end{align}
Similarly, for $x\in \partial{D}_{2} \setminus D_{2}^{*}$, 
\begin{align}\label{boundaryD21}
|(v_{1}^{1}-v_{1}^{*1})(x', x_{d})|\leq C\varepsilon;
\end{align}
for $x\in\partial{D}_{2}^{*}\setminus(D_{2}\cup \mathcal{C}_{\varepsilon^{\gamma}})$, by (\ref{mainev1}),
\begin{align}\label{boundaryD22}
|(v_{1}^{1}-v_{1}^{*1})(x', x_{d})|\leq C\varepsilon^{1-2\gamma}.
\end{align}

For $x\in\Omega_{R}^{*}$ with $|x'|=\varepsilon^{\gamma}$, it follows from (\ref{nabla_w_ialpha}), (\ref{estimate1a_d=2}), and (\ref{u1*-tildeu1*_d=2}) that
\begin{align*}
\left|\partial_{x_{d}}(v_{1}^{1}-v_{1}^{*1})(x',x_{d})\right|
=&\,\left|\partial_{x_{d}}(v_{1}^{1}-\bar{u}_{1}^{1})
+\partial_{x_{d}}(\bar{u}_{1}^{1}-\bar{u}_{1}^{*1})+\partial_{x_{d}}(\bar{u}_{1}^{*1}-v_{1}^{*1})\right|(x',x_{d})\nonumber\\
\leq&\,\frac{C\varepsilon}{|x'|^{2}(\varepsilon+|x'|^{2})}+\frac{C}{|x'|}\nonumber\\
\leq&\,\frac{C}{\varepsilon^{4\gamma-1}}+\frac{C}{\varepsilon^{\gamma}}.
\end{align*}
Thus, for $x\in\Omega_{R}^{*}$ with $|x'|=\varepsilon^{\gamma}$, recalling \eqref{boundaryD22}, we have
\begin{align}\label{boundary3_d=2}
|(v_{1}^{1}-v_{1}^{*1})(x',x_{d})|=&|(v_{1}^{1}-v_{1}^{*1})(x',x_{d})-(v_{1}^{1}-v_{1}^{*1})(x',h_{2}(x'))|+C\varepsilon^{1-2\gamma}\nonumber\\
\leq&\left|\partial_{x_{d}}(v_{1}^{1}-v_{1}^{*1})\right|_{|x'|=\varepsilon^{\gamma}}\cdot (h_1(x')-h_{2}(x'))+C\varepsilon^{1-2\gamma}\nonumber\\
\leq&(\frac{C}{\varepsilon^{4\gamma-1}}+\frac{C}{\varepsilon^{\gamma}})\cdot \varepsilon^{2\gamma}+C\varepsilon^{1-2\gamma}\nonumber\\
\leq& C(\varepsilon^{1-2\gamma}+\varepsilon^{\gamma}).
\end{align}
Letting $1-2\gamma=\gamma$, we take $\gamma=1/3$. Combining (\ref{boundary1_d=2}), (\ref{boundary2_d=2}) and (\ref{boundary3_d=2}), and recalling $v_{1}^{1}-v_{1}^{*1}=0$ on $\partial\Omega$, we obtain
\begin{align*}
|(v_{1}^{1}-v_{1}^{*1})(x)|\leq C\varepsilon^{1/3},\quad x\in\partial(V\setminus \mathcal{C}_{\sqrt[3]{\varepsilon}}).
\end{align*}

Now applying the maximum principle for Lam\'{e} systems on $V\setminus \mathcal{C}_{\sqrt[3]{\varepsilon}}$ (see, e.g.\cite{mmn})  yields
\begin{align*}
|(v_{1}^{1}-v_{1}^{*1})(x)|\leq C\varepsilon^{1/3},\quad\mbox{in}~~ V\setminus \mathcal{C}_{\sqrt[3]{\varepsilon}}.
\end{align*}
Then, using the standard boundary gradient estimates for Lam\'e system (see \cite{adn}), 
\begin{align*}
|\nabla(v_{1}^{1}-v_{1}^{*1})(x)|\leq C\varepsilon^{1/3},\quad\mbox{on}~~\partial\Omega.
\end{align*}

Therefore, recalling \eqref{b10},
\begin{align}\label{b10-*}
|b_{1}^{1,0}-b_{*1}^{1,0}|=\left|\int_{\partial\Omega}\frac{\partial (v_{1}^{1}-v_{1}^{*1})}{\partial \nu}\Big|_{+}\cdot \varphi\right|\leq\,C\varepsilon^{1/3}\|\varphi\|_{L^{1}(\partial\Omega)}.
\end{align}

{\bf STEP 2.} Secondly, for $b_{1}^{1,\alpha}$, $\alpha=1,2,\cdots,d$. The proof is essentially the same. It follows from the definitions of $v^{\alpha}$, $\alpha=1,2,\cdots,d$, \eqref{equ_valpha} that
\begin{equation}\label{v_alpha}
\begin{cases}
\mathcal{L}_{\lambda,\mu}(v^{\alpha}-\psi^{\alpha})=0,&\mbox{in}~\widetilde{\Omega},\\
v^{\alpha}-\psi^{\alpha}=0,&\mbox{on}~\partial{D}_{1}\cup\partial{D}_{2},\\
v^{\alpha}-\psi^{\alpha}=-\psi^{\alpha},&\mbox{on}~\partial{\Omega}.
\end{cases}
\end{equation}
Recalling the definitions of $v_{1}^{1}$, and using the integration by parts formula (\ref{equ4-1}), we have, for $\alpha=1,2,\cdots,d$,
\begin{align*}
b_1^{1,\alpha}&=\int_{\partial{D}_{1}}\frac{\partial (v^{\alpha}-\psi^{\alpha})}{\partial \nu}\Big|_{+}\cdot v_{1}^{1}=\int_{\widetilde{\Omega}}\left(\mathbb{C}^0e(v_{1}^{1}), e(v^{\alpha}-\psi^{\alpha})\right)=\int_{\partial\Omega}\frac{\partial v_{1}^{1}}{\partial \nu}\Big|_{+}\cdot (-\psi^{\alpha}).
\end{align*}
Similarly, for $\alpha=1,2,\cdots,d$,
\begin{align*}
b_{*1}^{1,\alpha}&=\int_{\partial{D}_{1}^{*}}\frac{\partial (v^{*\alpha}-\psi^{\alpha})}{\partial \nu}\Big|_{+}\cdot\psi^{1}=\int_{\partial\Omega}\frac{\partial v_{1}^{*1}}{\partial \nu}\Big|_{+}\cdot (-\psi^{\alpha}).
\end{align*}
In view of $v_{1}^{*1}=0$ on $\partial\Omega$, by using a standard boundary estimate for elliptic system (see, \cite{adn}), it is easy to see that
\begin{equation}\label{b1alpha*}
|b_{*1}^{1,\alpha}|\leq\,C\|\psi^{\alpha}\|_{L^{1}(\partial\Omega)}=C|\partial\Omega|,\quad\alpha=1,2,\cdots,d.
\end{equation}
By applying the same argument above, we have
\begin{align}\label{b11_alpha-*}
|b_{1}^{1,\alpha}-b_{*1}^{1,\alpha}|=\left|\int_{\partial\Omega}\frac{\partial(v_{1}^{1}- v_{1}^{*1})}{\partial \nu}\Big|_{+}\cdot (-\psi^{\alpha})\right|\leq\,C\varepsilon^{1/3}|\partial\Omega|,\quad\alpha=1,2,\cdots,d.
\end{align}

{\bf STEP 3.} Finally, recalling \eqref{b11decom}, using \eqref{C*-Ci} and \eqref{maineC}, and substituting \eqref{b10-*}, \eqref{b1alpha*}, \eqref{b11_alpha-*} and \eqref{C*-Ci}, we have
\begin{align*}\label{b11-b11*}
|b_{1}^{1}-b_{*1}^{1}|=&\left|\int_{\partial{D}_{1}}\frac{\partial u^{b}}{\partial \nu}\Big|_{+}\cdot \psi^{1}-\int_{\partial{D}_{1}^{*}}\frac{\partial u^{*}}{\partial \nu}\Big|_{+}\cdot \psi^{1}\right|\nonumber\\=&\left|\left(b_{1}^{1,0}+\sum_{\alpha=1}^{d}C_{2}^{\alpha}b_{1}^{1,\alpha}\right)-\left(b_{*1}^{1,0}+\sum_{\alpha=1}^{d}C_{*}^{\alpha}b_{*1}^{1,\alpha}\right)\right|\nonumber\\
\leq&|b_{1}^{1,0}-b_{*1}^{1,0}|+\sum_{\alpha=1}^{d}|C_{2}^{\alpha}||b_{1}^{1,\alpha}-b_{*1}^{1,\alpha}|+\sum_{\alpha=1}^{d}|C_{2}^{\alpha}-C_{*}^{\alpha}||b_{*1}^{1,\alpha}|\\
\leq&\,C\max\{\varepsilon^{1/3},\rho_{d}(\varepsilon)\}\left(\|\varphi\|_{L^{1}(\partial\Omega)}+|\partial\Omega|\right).
\end{align*}
The proof of Proposition \ref{lemma5.1d=2} is completed.
\end{proof}

\begin{proof}[Proof of Lemma \ref{lem3.2}]
{\bf STEP 1. Systems of $(C_{1}^{\alpha}+C_{2}^{\alpha})/2$ and $C_{*}^{\alpha}$}.
Recalling the original decomposition \eqref{decom_u} and the forth line of \eqref{mainequation}, we have
\begin{equation}\label{C1C2_0}
\left\{
\begin{aligned}
\sum_{\alpha=1}^{d(d+1)/2}C_{1}^{\alpha}a_{11}^{\alpha\beta}+\sum_{\alpha=1}^{d(d+1)/2}C_{2}^{\alpha}a_{21}^{\alpha\beta}-\tilde{b}_{1}^{\beta}&=0,\\
\sum_{\alpha=1}^{d(d+1)/2}C_{1}^{\alpha}a_{12}^{\alpha\beta}+\sum_{\alpha=1}^{d(d+1)/2}C_{2}^{\alpha}a_{22}^{\alpha\beta}-\tilde{b}_{2}^{\beta}&=0,
\end{aligned}
\right.\quad\quad~~\beta=1,2,\cdots,\frac{d(d+1)}{2},
\end{equation}
where $$\tilde{b}_{j}^{\beta}=\int_{\partial{D}_{j}}\frac{\partial{v}_{0}}{\partial\nu}\Big|_{+}\cdot\psi^{\beta}.
$$
For the first equation of \eqref{C1C2_0},
\begin{equation*}\label{C1C2_o}
\sum_{\alpha=1}^{d(d+1)/2}(C_{1}^{\alpha}+C_{2}^{\alpha})a_{11}^{\alpha\beta}+\sum_{\alpha=1}^{d(d+1)/2}C_{2}^{\alpha}(a_{21}^{\alpha\beta}-a_{11}^{\alpha\beta})-\tilde{b}_{1}^{\beta}=0,
\end{equation*}
and
\begin{equation*}\label{C1C2_o}
\sum_{\alpha=1}^{d(d+1)/2}(C_{1}^{\alpha}+C_{2}^{\alpha})a_{21}^{\alpha\beta}+\sum_{\alpha=1}^{d(d+1)/2}C_{1}^{\alpha}(a_{11}^{\alpha\beta}-a_{21}^{\alpha\beta})-\tilde{b}_{1}^{\beta}=0.
\end{equation*}
Adding these two equations together leads to
\begin{equation}\label{C1C2_1}
\sum_{\alpha=1}^{d(d+1)/2}(C_{1}^{\alpha}+C_{2}^{\alpha})(a_{11}^{\alpha\beta}+a_{21}^{\alpha\beta})+\sum_{\alpha=1}^{d(d+1)/2}(C_{1}^{\alpha}-C_{2}^{\alpha})(a_{11}^{\alpha\beta}-a_{21}^{\alpha\beta})-2\tilde{b}_{1}^{\beta}=0.
\end{equation}
Similarly, for the second equation of \eqref{C1C2_0},
\begin{equation}\label{C1C2_22}
\sum_{\alpha=1}^{d(d+1)/2}(C_{1}^{\alpha}+C_{2}^{\alpha})(a_{12}^{\alpha\beta}+a_{22}^{\alpha\beta})+\sum_{\alpha=1}^{d(d+1)/2}(C_{1}^{\alpha}-C_{2}^{\alpha})(a_{12}^{\alpha\beta}-a_{22}^{\alpha\beta})-2\tilde{b}_{2}^{\beta}=0.
\end{equation}
A further combination of \eqref{C1C2_1} and \eqref{C1C2_22} together yields
\begin{equation}\label{C1+C2_1}
\sum_{\alpha=1}^{d(d+1)/2}\frac{C_{1}^{\alpha}+C_{2}^{\alpha}}{2}\left(\sum_{i,j=1}^{2}a_{ij}^{\alpha\beta}\right)
+\sum_{\alpha=1}^{d(d+1)/2}\frac{C_{1}^{\alpha}-C_{2}^{\alpha}}{2}(a_{11}^{\alpha\beta}-a_{22}^{\alpha\beta})-(\tilde{b}_{1}^{\beta}+\tilde{b}_{2}^{\beta})=0.
\end{equation}

Recalling that
$$u^{*}=\sum_{\alpha=1}^{d(d+1)/2}C_{*}^{\alpha}v^{*\alpha}+v_{0}^{*},$$
where $v^{*\alpha}$ and $v_{0}^{*}$ are, respectively, defined by \eqref{def_v*alpha} and \eqref{def_v0*}.
From the third line of \eqref{u00*}, we have
\begin{align}\label{equ_C*alpha}
&\sum_{\alpha=1}^{d(d+1)/2}C_{*}^{\alpha}\left(\int_{\partial{D}_{1}^{*}}\frac{\partial{v}^{*\alpha}}{\partial\nu}\big|_{+}\cdot\psi^{\beta}+\int_{\partial{D}_{2}^{*}}\frac{\partial{v}^{*\alpha}}{\partial\nu}\big|_{+}\cdot\psi^{\beta}\right)\nonumber\\
&+\left(\int_{\partial{D}_{1}^{*}}\frac{\partial{v}_{0}^{*}}{\partial\nu}\big|_{+}\cdot\psi^{\beta}+\int_{\partial{D}_{2}^{*}}\frac{\partial{v}_{0}^{*}}{\partial\nu}\big|_{+}\cdot\psi^{\beta}\right)=0,\quad\beta=1,2,\cdots,\frac{d(d+1)}{2}.
\end{align}

{\bf STEP 2. Closeness}. Next, comparing \eqref{equ_C*alpha} with \eqref{C1+C2_1}, we will prove
\begin{align}
\left|\sum_{i,j=1}^{2}a_{ij}^{\alpha\beta}-\sum_{i=1}^{2}\int_{\partial{D}_{i}^{*}}\frac{\partial{v}^{*\alpha}}{\partial\nu}\big|_{+}\cdot\psi^{\beta}\right|\leq\,C\rho_{d}(\varepsilon),\quad\alpha,\beta=1,2,\cdots,\frac{d(d+1)}{2};\label{inequ1}\end{align}
and
\begin{equation}\label{inequ2}
\left|\sum_{i=1}^{2}\tilde{b}_{i}^{\beta}-\sum_{i=1}^{2}\int_{\partial{D}_{i}^{*}}\frac{\partial{v}_{0}^{*}}{\partial\nu}\big|_{+}\cdot\psi^{\beta}\right|\leq\,C\rho_{d}(\varepsilon),\quad\beta=1,2,\cdots,\frac{d(d+1)}{2}.
\end{equation}

We only prove \eqref{inequ1} for instance. The proof of \eqref{inequ2} is the same. By the definition of $v^{\alpha}$, \eqref{v_alpha},
\begin{align*}
a^{\alpha\beta}:=\sum_{i,j=1}^{2}a_{ij}^{\alpha\beta}
&=\int_{\partial{D}_{1}}\frac{\partial{v}^{\alpha}}{\partial\nu}\big|_{+}\cdot\psi^{\beta}+\int_{\partial{D}_{2}}\frac{\partial{v}^{\alpha}}{\partial\nu}\big|_{+}\cdot\psi^{\beta}\nonumber\\
&=\int_{\partial{D}_{1}}\frac{\partial({v}^{\alpha}-\psi^{\alpha})}{\partial\nu}\big|_{+}\cdot\psi^{\beta}+\int_{\partial{D}_{2}}\frac{\partial({v}^{\alpha}-\psi^{\alpha})}{\partial\nu}\big|_{+}\cdot\psi^{\beta}\nonumber\\
&=\int_{\partial{\Omega}}\frac{\partial{v}^{\beta}}{\partial\nu}\big|_{+}\cdot(-\psi^{\alpha}).
\end{align*}
Similarly, by \eqref{def_v*alpha},
$$a^{*\alpha\beta}:=\int_{\partial{D}_{1}^{*}}\frac{\partial{v}^{*\alpha}}{\partial\nu}\big|_{+}\cdot\psi^{\beta}+\int_{\partial{D}_{2}^{*}}\frac{\partial{v}^{*\alpha}}{\partial\nu}\big|_{+}\cdot\psi^{\beta}=\int_{\partial{\Omega}}\frac{\partial{v}^{*\beta}}{\partial\nu}\big|_{+}\cdot(-\psi^{\alpha}).$$
Thus,
\begin{equation}\label{equ3}
\left|a^{\alpha\beta}-a^{*\alpha\beta}\right|=\left|\int_{\partial{\Omega}}\frac{\partial({v}^{\beta}-{v}^{*\beta})}{\partial\nu}\big|_{+}\cdot(-\psi^{\alpha})\right|.
\end{equation}

Now we use the argument in STEP 1 and STEP 2 of the proof of Proposition \ref{lemma5.1d=2} to prove that
\begin{equation}\label{difference_vbeta}
|{v}^{\beta}-{v}^{*\beta}|\leq\,C\varepsilon,\quad\mbox{in}~V,\quad\mbox{for}~\beta=1,2,\cdots,d(d+1)/2.
\end{equation}
Denote $\Gamma_{i1}:=\partial{D}_{i}^{*}\setminus\partial{D}_{i}$ and $\Gamma_{i2}:=\partial{D}_{i}\setminus\partial{D}_{i}^{*}$, $i=1,2$. Then 
$\partial{V}=\cup_{i,j=1,2}\Gamma_{ij}\cup\partial\Omega$. Setting
$$\phi^{\beta}(x):={v}^{\beta}(x)-{v}^{*\beta}(x),$$
then $\mathcal{L}_{\lambda,\mu}\phi^{\beta}=0$ in $V$. It is easy to see that $\phi^{\beta}=0$ on $\partial\Omega$. For $\beta=1,2,\cdots,d$, on $\Gamma_{11}$, by mean value theorem, and \eqref{v0bdd}, we have
\begin{align*}
|\phi^{\beta}|\Big|_{\Gamma_{11}}&=|{v}^{\beta}-{v}^{*\beta}|\Big|_{\Gamma_{11}}=|{v}^{\beta}-\psi^{\beta}|\Big|_{\Gamma_{11}}\\
&=|{v}^{\beta}(x',x_{d})-v^{\beta}(x',x_{d}+\varepsilon/2)|\Big|_{\Gamma_{11}}=|\nabla {v}^{\beta}(\xi)|\varepsilon\leq\,C\varepsilon,
\end{align*}
where $\xi\in{D}_{1}^{*}\setminus{D}_{1}$. Similarly, by \eqref{v0*bdd},
\begin{align*}
|\phi^{\beta}|\Big|_{\Gamma_{12}}&=|{v}^{\beta}-{v}^{*\beta}|\Big|_{\Gamma_{12}}=|\psi^{\beta}-{v}^{*\beta}|\Big|_{\Gamma_{12}}\\&=|{v}^{*\beta}(x',x_{d})-v^{*\beta}(x',x_{d}-\varepsilon/2)|\Big|_{\Gamma_{12}}=|\nabla {v}^{\beta}(\xi)|\varepsilon\leq\,C\varepsilon,
\end{align*}
for some $\xi\in{D}_{1}\setminus{D}_{1}^{*}$. For $\beta=d+1,\cdots,d(d+1)/2$, on $\Gamma_{11}$, 
\begin{align*}
|\phi^{\beta}|\Big|_{\Gamma_{11}}&=|{v}^{\beta}-{v}^{*\beta}|\Big|_{\Gamma_{11}}=|{v}^{\beta}-\psi^{\beta}|\Big|_{\Gamma_{11}}=|{v}^{\beta}-(\psi^{\beta}+\varepsilon)+\varepsilon|\Big|_{\Gamma_{11}}\\
&=|{v}^{\beta}(x',x_{d})-v^{\beta}(x',x_{d}+\varepsilon/2)|\Big|_{\Gamma_{11}}+\varepsilon=|\nabla {v}^{\beta}(\xi)|\varepsilon+\varepsilon\leq\,C\varepsilon,
\end{align*}
where $\xi\in{D}_{1}^{*}\setminus{D}_{1}$. On $\Gamma_{12}$ is the same. By the same way,
$$|\phi^{\beta}|\Big|_{\Gamma_{21}\cup\Gamma_{22}}\leq\,C\varepsilon,\quad\mbox{for}~\beta=1,2,\cdots,d(d+1)/2.$$
Applying the maximum principle to $\phi^{\beta}$ on $V$ (\cite{mmn}), yields \eqref{difference_vbeta}. 

Then, using the standard boundary gradient estimates for Lam\'e system again, 
\begin{align*}
|\nabla\phi^{\beta}|\leq C\varepsilon,\quad\mbox{on}~~\partial\Omega.
\end{align*}
Therefore, recalling \eqref{equ3},
\begin{align*}\label{b10-*}
\left|a^{\alpha\beta}-a^{*\alpha\beta}\right|\leq\,C\varepsilon\|\psi^{\alpha}\|_{L^{1}(\partial\Omega)},\quad\alpha,\beta=1,2,\cdots,\frac{d(d+1)}{2}.
\end{align*}

{\bf STEP 3. Invertibility of the coefficients matrix $a^{*\alpha\beta}$}. On the other hand,
$$a^{*\alpha\beta}=\int_{\partial{D}_{1}^{*}}\frac{\partial{v}^{*\alpha}}{\partial\nu}\big|_{+}\cdot\psi^{\beta}+\int_{\partial{D}_{2}^{*}}\frac{\partial{v}^{*\alpha}}{\partial\nu}\big|_{+}\cdot\psi^{\beta}=\int_{\widetilde{\Omega}^{*}}\left(\mathbb{C}^0e(v^{*\alpha}), e(v^{*\beta})\right).$$
We claim that $a^{*\alpha\beta}$ is positive definite, so invertible. Moveover, there exists a universal constant $C$ such that
\begin{equation}\label{a_positive}
\sum_{\alpha,\beta=1}^{d(d+1)/2}a^{*\alpha\beta}\xi^{\alpha}\xi^{\beta}\geq\frac{1}{C},\quad\forall~|\xi|=1.
\end{equation}
Indeed, if $e\Big(\sum_{\alpha=1}^{d(d+1)/2}\xi_{\alpha}v^{*\alpha}\Big)=0$ in $\widetilde{\Omega}^{*}$, then $\sum_{\alpha=1}^{d(d+1)/2}\xi_{\alpha}v^{*\alpha}=\sum_{\alpha=1}^{d(d+1)/2}a_{\alpha}\psi^{\alpha}$ in $\widetilde{\Omega}^{*}$, for some constants $a_{\alpha}$. Since $\sum_{\alpha=1}^{d(d+1)/2}\xi_{\alpha}v^{*\alpha}\big|_{\partial{\Omega}}=0$, it follows that $a_{1}=a_{2}=\cdots=a_{d(d+1)/2}=0$. Hence, $|\xi|=0$ by using $v^{*\alpha}=\psi^{\alpha}$ on $\partial{D}_{i}^{*}$.  This is a contradiction. 

{\bf STEP 4. Completion}. Finally, we notice from $\bar{u}=1-\underline{u}$ in $\Omega_{R}$ that $\nabla\bar{u}_{1}^{\alpha}=-\nabla\bar{u}_{2}^{\alpha}$ in $\Omega_{R}$, $\alpha=1,2,\cdots,d(d+1)/2$. Then making use of \eqref{nabla_w_ialpha} and \eqref{nabla_w_out},
\begin{align*}
\left|a_{11}^{\alpha\beta}-a_{22}^{\alpha\beta}\right|=&\,\Big|\int_{\Omega_{R}}\left(\mathbb{C}^0e(v_{1}^{\alpha}), e(v_{1}^{\beta})\right)-\int_{\Omega_{R}}\left(\mathbb{C}^0e(v_{2}^{\alpha}), e(v_{2}^{\beta})\right)\\
&+\int_{\widetilde{\Omega}\setminus\Omega_{R}}\left(\mathbb{C}^0e(v_{1}^{\alpha}), e(v_{1}^{\beta})\right)-\int_{\widetilde{\Omega}\setminus\Omega_{R}}\left(\mathbb{C}^0e(v_{2}^{\alpha}), e(v_{2}^{\beta})\right)\Big|\\
\leq&\,\Big|\int_{\Omega_{R}}\left(\mathbb{C}^0e(\bar{u}_{1}^{\alpha}), e(\bar{u}_{1}^{\beta})\right)-\int_{\Omega_{R}}\left(\mathbb{C}^0e(\bar{u}_{2}^{\alpha}), e(\bar{u}_{2}^{\beta})\right)\Big|\\
&+\Big|\int_{\Omega_{R}}\left(\mathbb{C}^0e(v_{1}^{\alpha}-\bar{u}_{1}^{\alpha}), e(v_{1}^{\beta})\right)-\int_{\Omega_{R}}\left(\mathbb{C}^0e(v_{2}^{\alpha}-\bar{u}_{2}^{\alpha}), e(v_{2}^{\beta})\right)\Big|+C\\
\leq&\,C.
\end{align*}
So that, by \eqref{maineC1-C2},
\begin{equation}\label{equ_x}
\left|\frac{C_{1}^{\alpha}-C_{2}^{\alpha}}{2}(a_{11}^{\alpha\beta}-a_{22}^{\alpha\beta})\right|\leq\,C\rho_{d}(\varepsilon),\quad\mbox{for}~~\alpha=1,2,\cdots,d.
\end{equation}

It follows from \eqref{a_positive} and \eqref{inequ1} that for sufficiently small $\varepsilon$, $(a^{\alpha\beta})$ is also invertible. So that for sufficiently small $\varepsilon$, in view of \eqref{inequ2}, and \eqref{equ_x} for $\alpha=1,2,\cdots,d$, it follows from comparing \eqref{C1+C2_1} and \eqref{equ_C*alpha} that the proof of Lemma \ref{lem3.2} is finished.
\end{proof}

\bigskip

\section{Proof of Theorem \ref{thm1.2} in dimension two}\label{sec4}

In this section, we first give an improvement of estimates for $|C_{1}^{\alpha}-C_{2}^{\alpha}|$, $\alpha=1,2$, especially including a lower bound, which contains a non-zero factor $b_{1}^{\alpha}$ ($b_{*1}^{\alpha}$ is its limit). This is due to a careful selection from the whole system of $C_{i}^{\alpha}, $\eqref{C1C2_3}, although it seems a little different with that in \cite{bll}. From it we can see the role of the blow-up factor $b_{*1}^{\alpha}$ in such singularity analysis of $|\nabla u|$. 

\begin{prop}\label{prop4.1}If $b_{1}^{\alpha}\neq0$, then
\begin{equation}\label{C1-C2_d=2}
\frac{\sqrt{\varepsilon}}{C}\Big|b_{1}^{\alpha}[\varphi]\Big|+o(\sqrt{\varepsilon})\leq\big|C_{1}^{\alpha}-C_{2}^{\alpha}\big|\leq\,C\sqrt{\varepsilon},\quad\alpha=1,2.
\end{equation}
\end{prop}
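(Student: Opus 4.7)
The plan is to view the system \eqref{C1C2_3} with the selection from Subsection \ref{subsec2.4} as a block linear system $AX = B$ (a $4\times 4$ system when $d=2$) and invert it asymptotically to read off the leading behavior of $X_1 = (C_1^1 - C_2^1,\, C_1^2 - C_2^2)^T$. The expected scaling is $X_1 \sim \sqrt{\varepsilon}\,(M^*)^{-1} B_1^*$ for a positive-definite $d\times d$ matrix $M^*$ arising as the singular limit $\sqrt\varepsilon A_{11} \to M^*$; once this is established the upper bound $|C_1^\alpha - C_2^\alpha|\le C\sqrt\varepsilon$ follows at once from $|(M^*)^{-1}|,|B_1|\le C$, and the lower bound follows provided the componentwise structure of $X_1$ can be read off from $B_1^*$.

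To make this precise, I would first use the variational identity $a_{ij}^{\alpha\beta} = \int_{\widetilde\Omega}(\mathbb{C}^0 e(v_i^\alpha), e(v_j^\beta))$ together with Lemma \ref{prop1} to replace $v_1^\alpha$ ($\alpha=1,\dots,d$) by the auxiliary functions $\bar u_1^\alpha = \bar u\,\psi^\alpha$ up to errors whose contribution to the integral is $O(1)$. The dominant contribution comes from $\Omega_R$ through the normal derivative $\partial_{x_d}\bar u = 1/\delta(x')$, and a direct computation yields
\begin{equation*}
\sqrt{\varepsilon}\, a_{11}^{\alpha\beta} \longrightarrow M^*_{\alpha\beta} := \lim_{\varepsilon\to 0}\sqrt{\varepsilon}\int_{\Omega_R}\bigl(\mathbb{C}^0 e(\bar u_1^\alpha), e(\bar u_1^\beta)\bigr)\,dx, \qquad \alpha,\beta = 1,\dots,d,
\end{equation*}
with $M^*$ positive definite by the ellipticity of $\mathbb{C}^0$ and the linear independence of the $\bar u_1^\alpha$. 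Meanwhile, the rotational-block estimate \eqref{mainevi3} together with the analogous auxiliary approximation shows $|A_{12}|,|A_{21}|=o(1/\sqrt\varepsilon)$ and $A_{22}$ uniformly invertible, so the Schur complement satisfies $S := A_{11}-A_{12}A_{22}^{-1}A_{21} = A_{11}(1+o(1))$ and the system reduces at leading order to $M^* X_1/\sqrt\varepsilon = B_1^* + o(1)$.

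The crux of the componentwise lower bound is the additional claim that the leading matrix $M^*$ is \emph{diagonal}. For $\alpha\neq\beta$ in $\{1,\dots,d\}$, the integrand $(\mathbb{C}^0 e(\bar u_1^\alpha), e(\bar u_1^\beta))$ is a product of one factor $\partial_{x_d}\bar u = 1/\delta(x')$ with a tangential factor $\partial_{x_k}\bar u$; the flatness conditions $h_i(0') = \nabla h_i(0') = 0$ force $\partial_{x_k}\bar u$ to be odd in $x_k$ at leading order while $\delta(x')$ is even, so the singular $1/\sqrt\varepsilon$ part of the integral vanishes by parity. Hence $M^*$ is diagonal with $M^*_{\alpha\alpha}$ bounded above and below by universal constants, and the leading-order system decouples into scalar equations
\begin{equation*}
\frac{C_1^\alpha - C_2^\alpha}{\sqrt{\varepsilon}} = \frac{b_{*1}^\alpha}{M^*_{\alpha\alpha}} + o(1), \qquad \alpha = 1,\dots,d,
\end{equation*}
which, combined with Proposition \ref{lemma5.1d=2} (replacing $b_{*1}^\alpha$ by $b_1^\alpha$ modulo $o(1)$), yields the claimed two-sided bound. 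The main technical obstacle is the parity argument forcing the leading-order off-diagonal entries of $M^*$ to vanish; carrying it out requires careful tracking of the tangential derivatives of $\bar u$ and the explicit isotropic form of $\mathbb{C}^0$, together with the bookkeeping that the next-order correction to $a_{11}^{\alpha\beta}$ ($\alpha\ne\beta$) remains $o(1/\sqrt\varepsilon)$ so as not to contaminate the decoupling.
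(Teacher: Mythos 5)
Your overall strategy---work with the selected $4\times4$ system \eqref{C1C2_3}, invert it, and show that the coefficient of $b_{1}^{\alpha}$ in $C_{1}^{\alpha}-C_{2}^{\alpha}$ is of exact order $\sqrt{\varepsilon}$---is the same as the paper's, which does this by Cramer's rule together with the entry estimates of Lemma \ref{lem_a_11d=2} and the uniform lower bound $\det A_{22}\geq 1/C$ of Lemma \ref{lem4.4}. However, two steps of your reduction do not hold as stated. First, ``$A_{22}$ uniformly invertible'' is not a consequence of \eqref{mainevi3}: that estimate only bounds the entries of $A_{22}$ from above, whereas what you need (both to form $A_{22}^{-1}$ in the Schur complement and, ultimately, to keep the coefficient multiplying $b_{1}^{\alpha}$ bounded away from zero) is a lower bound on the smallest eigenvalue of $A_{22}$ that is uniform in $\varepsilon$. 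This is precisely the content of the paper's Lemma \ref{lem4.4}, proved by a compactness argument: pass along a sequence $\varepsilon_{k}\to0$ to the limit functions $v_{i}^{*3}$ and invoke the rigid-displacement lemma (an element of $\Psi$ vanishing at two distinct points vanishes identically) to rule out degeneration. Without this ingredient your constant $C$ in the lower bound is not universal.

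Second, the reduction ``the system reduces at leading order to $M^{*}X_{1}/\sqrt{\varepsilon}=B_{1}^{*}+o(1)$'' silently discards the right-hand-side correction $A_{12}A_{22}^{-1}B_{2}$ produced by the Schur elimination. Under your stated bound $|A_{12}|=o(1/\sqrt{\varepsilon})$ this term need not even be bounded, and with the actual estimate $|a_{1j}^{\alpha 3}|\leq C$ from \eqref{a11_13} it is $O(1)$, i.e.\ of exactly the same size as $B_{1}^{*}$; it therefore cannot be absorbed into the $o(1)$, and the decoupled scalar equations (hence the componentwise lower bound) do not follow from what you have established. Controlling the contribution of the rotational data $b_{1}^{3},b_{2}^{3}$ to $C_{1}^{\alpha}-C_{2}^{\alpha}$ is where the substantive work of the proposition lies; in the paper it is exactly the $o(\sqrt{\varepsilon})$ error in the Cramer's-rule expansion, obtained from the cofactor structure together with the refined off-diagonal bound \eqref{a11_12d=2}, and your appeal to ``$|A_{12}|=o(1/\sqrt{\varepsilon})$'' does not supply it. Finally, the step you single out as the crux---diagonality of $M^{*}$ via a parity cancellation---is not where the difficulty is: the crude absolute-value estimate $|a_{11}^{12}|\leq C|\log\varepsilon|=o(1/\sqrt{\varepsilon})$ (the paper's \eqref{a11_12d=2}) already makes the translation--translation off-diagonal entries negligible after multiplication by $\sqrt{\varepsilon}$, and the existence of the limits $M^{*}_{\alpha\beta}$ (rather than two-sided bounds, which is all the paper ever uses) is neither proved nor needed; moreover your parity argument is exact only after replacing $h_{1},h_{2}$ by their quadratic Taylor parts, so in the $C^{2,\gamma}$ setting it would require an additional error analysis that your sketch does not provide.
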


In order to solve $C_{1}^{1}-C_{2}^{1}$ and $C_{1}^{2}-C_{2}^{2}$ from \eqref{C1C2_3}, we choose $\beta=1,2,3$, for $j=1$, and $\beta=3$ for $j=2$. Then
$$
AX=\begin{pmatrix}
~a_{11}^{11}&a_{11}^{12}&a_{11}^{13}&a_{12}^{13}~\\\\
~a_{11}^{21}&a_{11}^{22}&a_{11}^{23}&a_{12}^{23}~\\\\
~a_{11}^{31}&a_{11}^{32}&a_{11}^{33}&a_{12}^{33}~\\\\
~a_{21}^{31}&a_{21}^{32}&a_{21}^{33}&a_{22}^{33}
\end{pmatrix}\begin{pmatrix}
~C_{1}^{1}-C_{2}^{1}~\\\\
C_{1}^{2}-C_{2}^{2}\\\\
C_{1}^{3}\\\\
C_{2}^{3}
\end{pmatrix}
=\begin{pmatrix}
~b_{1}^{1}~\\\\
b_{1}^{2}\\\\
b_{1}^{3}\\\\
b_{2}^{3}
\end{pmatrix}.
$$

\subsection{Refined estimates in dimension $d=2$ }

We first give the following refined estimates for $a_{ij}^{\alpha\beta}$.

\begin{lemma}\label{lem_a_11d=2}
$A$ is positive definite, and
\begin{equation}\label{a11_11}
\frac{1}{C\sqrt{\varepsilon}}\leq\,a_{11}^{\alpha\alpha}\leq\,\frac{C}{\sqrt{\varepsilon}},\qquad\alpha=1,2;
\end{equation}
\begin{equation}\label{a11_12d=2}
|a_{11}^{12}|=|a_{11}^{21}|\leq\,C|\log\varepsilon|;
\end{equation}
\begin{equation}\label{a11_33}
\frac{1}{C}\leq\,a_{ii}^{33}\leq\,C,\quad\,i=1,2;
\end{equation}

\begin{equation}\label{a11_13}
\left|a_{12}^{33}\right|=\left|a_{21}^{33}\right|,\left|a_{ij}^{\alpha3}\right|=\left|a_{ji}^{3\alpha}\right|\leq\,C,\quad\,i=1,2,\alpha=1,2;
\end{equation}
and
\begin{equation}\label{bjbeta}
\left|b_{j}^{\beta}\right|\leq\,C.
\end{equation}
\end{lemma}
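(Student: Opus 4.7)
The plan is to recast every entry of the matrix $A$ as an energy inner product. By the integration-by-parts identity \eqref{equ4-1} applied on $\widetilde{\Omega}$, together with $\mathcal{L}_{\lambda,\mu}v_j^\beta=0$ and the boundary data in \eqref{v1alpha},
$$a_{ij}^{\alpha\beta}=\int_{\widetilde{\Omega}}\bigl(\mathbb{C}^{0}e(v_i^{\alpha}),e(v_j^{\beta})\bigr)\,dx.$$
The symmetry $a_{ij}^{\alpha\beta}=a_{ji}^{\beta\alpha}$ is then transparent, giving the equalities in \eqref{a11_13}. The $4\times 4$ matrix $A$ is the Gram matrix of the four functions $v_1^1,v_1^2,v_1^3,v_2^3$ in the energy inner product; it is automatically positive semi-definite, and positive definiteness follows because if a linear combination of these four functions had vanishing strain throughout the connected set $\widetilde{\Omega}$, it would be a single rigid motion $\psi$ whose traces on $\partial D_1$, $\partial D_2$, and $\partial \Omega$ force $\psi\equiv 0$ and all coefficients to vanish (using linear independence of $\psi^1,\psi^2,\psi^3$).

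For the size estimates I would replace $v_i^{\alpha}$ by the explicit auxiliary function $\bar{u}_i^{\alpha}=\bar{u}\,\psi^{\alpha}$ (respectively $\underline{u}\,\psi^{\alpha}$) and set $w_i^{\alpha}:=v_i^{\alpha}-\bar{u}_i^{\alpha}$. Since $w_i^{\alpha}$ vanishes on all of $\partial\widetilde{\Omega}$, an integration by parts kills the surface term:
$$\int_{\widetilde{\Omega}}\!\bigl(\mathbb{C}^{0}e(\bar{u}_i^{\alpha}),e(w_j^{\beta})\bigr)=-\int_{\widetilde{\Omega}}\!\mathcal{L}_{\lambda,\mu}\bar{u}_i^{\alpha}\cdot w_j^{\beta}\,dx.$$
A direct computation from \eqref{ubar} gives $|\mathcal{L}_{\lambda,\mu}\bar{u}_i^{\alpha}|\leq C/\delta(x')$ in $\Omega_R$, and the pointwise bound $|w_j^{\beta}(x)|\leq C\sqrt{\delta(x')}$, obtained by integrating \eqref{nabla_w_ialpha} vertically from the vanishing trace on $\partial D_1\cup\partial D_2$, makes this cross-contribution $O(1)$; the quadratic term $\int(\mathbb{C}^0 e(w_i^\alpha),e(w_j^\beta))$ is also $O(1)$ by Cauchy--Schwarz and the $L^2$-bounds implied by \eqref{nabla_w_ialpha}--\eqref{nabla_w_out}. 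Every estimate then reduces to an explicit integral of products of $\partial_k\bar{u}$ over $\Omega_R$: using \eqref{nabla_ubar} one finds $\int_{\Omega_R}(\partial_{x_d}\bar{u})^2\sim\varepsilon^{-1/2}$ (giving both inequalities in \eqref{a11_11}), $\int_{\Omega_R}|\partial_{x'}\bar{u}\,\partial_{x_d}\bar{u}|\leq C|\log\varepsilon|$ (giving \eqref{a11_12d=2}), and for $\alpha=3$ the extra factor $\psi^3$ in $\bar{u}_i^3$ satisfies $|\psi^3(x)|\leq C(|x'|+|x_d|)\leq C\sqrt{\delta(x')}$ in $\Omega_R$, absorbing one inverse power of $\delta$ and producing the bounded estimates for $a_{ii}^{33}$, $a_{12}^{33}$, and $a_{ij}^{\alpha 3}$ in \eqref{a11_33} and \eqref{a11_13}.

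The delicate step, and the main obstacle, is the strict lower bound $a_{ii}^{33}\geq 1/C$, for which the leading $\bar{u}$-integral only supplies an upper bound. Here I plan to argue by contradiction and compactness: extend $v_1^3$ to $\Omega$ by $\psi^3$ in $D_1$ and $0$ in $D_2$, so that the extension lies in $H^{1}_{0}(\Omega;\mathbb{R}^{d})$ with energy equal to $a_{11}^{33}$; if $a_{11}^{33}(\varepsilon_n)\to 0$ along a sequence, Korn's inequality on the fixed ambient domain $\Omega$ forces weak $H^{1}$-convergence to a limit $v_\infty$ with $e(v_\infty)=0$ on the connected set $\widetilde{\Omega}^{*}$ and prescribed traces $\psi^3$ on $\partial D_1^{*}$ and $0$ on $\partial D_2^{*}\cup\partial\Omega$; a single rigid motion cannot satisfy this, a contradiction. (Equivalently one could invoke the positive-definiteness of $(a^{*\alpha\beta})$ already proved in Step~3 of Lemma~\ref{lem3.2}.) Finally, \eqref{bjbeta} is immediate from $u_b=\sum_{\alpha=1}^{d}C_2^\alpha v^\alpha+v_0$ combined with the uniform bound $\|\nabla u_b\|_{L^{\infty}(\widetilde{\Omega})}\leq C$ coming from \eqref{v0bdd} and \eqref{maineC}, so that $|b_j^\beta|\leq \|\partial_\nu u_b\|_{L^{\infty}(\partial D_j)}\,|\partial D_j|\,\|\psi^\beta\|_{L^{\infty}(\partial D_j)}\leq C$. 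The subtle point throughout is to keep the $\bar{u}$-replacement error strictly of lower order than the main terms $\varepsilon^{-1/2}$ and $|\log\varepsilon|$; the integration-by-parts maneuver above is precisely what achieves this by bringing the error down to $O(1)$.
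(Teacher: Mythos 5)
Your Gram-matrix reformulation $a_{ij}^{\alpha\beta}=\int_{\widetilde{\Omega}}(\mathbb{C}^{0}e(v_i^{\alpha}),e(v_j^{\beta}))$, the rigid-motion trace argument for positive definiteness, the bulk computation giving $|a_{11}^{12}|\le C|\log\varepsilon|$ (the main term being $(\lambda+\mu)\int_{\Omega_R}\partial_{x'}\bar{u}\,\partial_{x_d}\bar{u}$), and the direct bound $|b_j^{\beta}|\le C$ from $\|\nabla u_b\|_{L^{\infty}(\widetilde{\Omega})}\le C$ are all sound, and for the items the paper actually proves here (positive definiteness, \eqref{a11_12d=2}, \eqref{bjbeta}) your route is essentially a bulk-energy variant of the paper's boundary-integral and integration-by-parts computations; the paper simply quotes \eqref{a11_11}, \eqref{a11_33}, \eqref{a11_13} from \cite{bll}.

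The genuine gap is your derivation of \eqref{a11_13} for $\alpha=1,2$. The bookkeeping ``$|\psi^{3}|\le C\sqrt{\delta}$ absorbs one inverse power of $\delta$'' gives $|e(\bar{u}_j^{3})|\le C\delta^{-1/2}$, and pairing this with $|e(\bar{u}_i^{\alpha})|\le C\delta^{-1}$ yields only $\int_{\Omega_R}\delta^{-3/2}dx\le C\int_{|x'|<R}\delta^{-1/2}dx'\le C|\log\varepsilon|$, i.e.\ the same logarithm as in \eqref{a11_12d=2}, not the bound $C$ asserted in \eqref{a11_13}. This loss is not an artifact of crude counting: for $a_{11}^{23}$ the leading part of the integrand is $(\lambda+2\mu)\,x_1(\partial_{x_d}\bar{u})^{2}\sim x_1/\delta^{2}$, whose absolute integral over $\Omega_R$ is of order $|\log\varepsilon|$; the $O(1)$ bound holds only because this term is odd in $x_1$ to leading order (the quadratic part of $\delta$ is even, and the $C^{2,\gamma}$ remainder contributes an integrable error), a cancellation your sketch never invokes. (For $\alpha=1$ one can rescue the bound entrywise, since the $\delta^{-1}$ component of $e(\bar{u}_i^{1})$ only meets $O(1)$ or $O(|x_1|)$ components of $e(\bar{u}_j^{3})$, but for $\alpha=2$ some such cancellation argument is unavoidable.) Two smaller points: the claimed bound $|\mathcal{L}_{\lambda,\mu}\bar{u}_i^{\alpha}|\le C/\delta$ is false, since the component $(\lambda+\mu)\partial_{x'}\partial_{x_d}\bar{u}$ is of size $|x'|/\delta^{2}\sim\delta^{-3/2}$ (your cross-term estimate survives with the corrected bound because $\int_{\Omega_R}|x'|\delta^{-3/2}\sqrt{\delta}\,dx\le C$); and \eqref{nabla_w_ialpha} is stated only for $\alpha=1,\dots,d$, so the error terms involving $v_i^{3}-\bar{u}_i^{3}$ in \eqref{a11_33} and \eqref{a11_13} need the corresponding estimate from \cite{bll}, not Lemma \ref{prop1} as cited. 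Your Korn-type lower bound for $a_{ii}^{33}$ is fine (indeed, extending $v_i^{3}$ by $\psi^{3}$ and $0$ and applying Korn's first inequality in $H_0^{1}(\Omega;\mathbb{R}^d)$ gives it directly, without compactness), but the parenthetical appeal to Step 3 of Lemma \ref{lem3.2} would not suffice, since that step concerns the summed quantities $a^{*\alpha\beta}$ built from $v^{*\alpha}=v_1^{*\alpha}+v_2^{*\alpha}$, not the individual diagonal entries.
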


\begin{remark}
    Estimates \eqref{a11_11}, \eqref{a11_33}, \eqref{a11_13} are the same as in \cite{bll}, we omit their proof here. Estimate \eqref{a11_12d=2} is an improvement of (4.13) in \cite{bll}, $|a_{11}^{12}|=|a_{11}^{21}|\leq\,C\varepsilon^{-1/4}$.  While \eqref{bjbeta} needs to be shown since the definition of $b_{j}^{\beta}$ is different with that in \cite{bll}. So we only prove \eqref{a11_12d=2} and \eqref{bjbeta} below.
\end{remark}

\begin{proof}[Proof of Lemma \ref{lem_a_11d=2}]
For \eqref{a11_12d=2}, we take $a_{11}^{12}$ for instance. By definition \eqref{def_bj},
\begin{align*}
a_{11}^{12}=-\int_{\partial{D}_{1}}\frac{\partial{v}_{1}^{1}}{\partial\nu}\Big|_{+}\cdot\psi^{2}&=-\int_{\partial{D}_{1}}\frac{\partial\bar{u}_{1}^{1}}{\partial\nu}\Big|_{+}\cdot\psi^{2}-\int_{\partial{D}_{1}}\frac{\partial({v}_{1}^{1}-\bar{u}_{1}^{1})}{\partial\nu}\Big|_{+}\cdot\psi^{2}\\
&:=-\mathrm{I}-\mathrm{II},
\end{align*}
where
$$\mathrm{I}=\int_{\partial{D}_{1}}\frac{\partial\bar{u}_{1}^{1}}{\partial\nu}\Big|_{+}\cdot\psi^{2}=\int_{\partial{D}_{1}\cap\mathcal{C}_{R}}\frac{\partial\bar{u}_{1}^{1}}{\partial\nu}\Big|_{+}\cdot\psi^{2}+\int_{\partial{D}_{1}\setminus\mathcal{C}_{R}}\frac{\partial\bar{u}_{1}^{1}}{\partial\nu}\Big|_{+}\cdot\psi^{2}:=\mathrm{I}_{R}+O(1),$$
and by using \eqref{nabla_w_ialpha},
$$|\mathrm{II}|\leq\,C.$$

On boundary $\partial{D}_{1}$,
$$n_{1}=\frac{\partial_{x_{1}}h_{1}(x_{1})}{\sqrt{1+|\partial_{x_{1}}h_{1}(x_{1})|^{2}}},\quad\,n_{2}=\frac{1}{\sqrt{1+|\partial_{x_{1}}h_{1}(x_{1})|^{2}}}.$$
Recalling
$$\bar{u}_{1}^{1}=\begin{pmatrix}
~\bar{u}~\\
0
\end{pmatrix},\quad\nabla\bar{u}_{1}^{1}=\begin{pmatrix}
~~\partial_{x_{1}}\bar{u}&\partial_{x_{2}}\bar{u}~~\\
0&0
\end{pmatrix},$$
then
$$\left(\nabla\bar{u}_{1}^{1}+(\nabla\bar{u}_{1}^{1})^{T}\right)\vec{n}=\begin{pmatrix}
~~2\partial_{x_{1}}\bar{u}n_{1}+\partial_{x_{2}}\bar{u}n_{2}~~\\
\partial_{x_{2}}\bar{u}n_{1}
\end{pmatrix}.$$
Thus,
\begin{align*}
\mathrm{I}_{R}=&\int_{\partial{D}_{1}\cap\mathcal{C}_{R}}\frac{\partial\bar{u}_{1}^{1}}{\partial\nu}\Big|_{+}\cdot\psi^{2}\\=&\int_{\partial{D}_{1}\cap\mathcal{C}_{R}}\left(\lambda\left(\nabla\cdot\bar{u}_{1}^{1}\right)\vec{n}+\mu\left(\nabla\bar{u}_{1}^{1}+(\nabla\bar{u}_{1}^{1})^{T}\right)\vec{n}\right)\cdot\begin{pmatrix}
~0~\\
1
\end{pmatrix}dS\\
=&\int_{\partial{D}_{1}\cap\mathcal{C}_{R}}\lambda\left(\partial_{x_{1}}\bar{u}\right)n_{2}+\mu\partial_{x_{2}}\bar{u}n_{1}dS.
\end{align*}
So that
\begin{align*}
|\mathrm{I}_{R}|\leq&\Big|\int_{\partial{D}_{1}\cap\mathcal{C}_{R}}\lambda\left(\partial_{x_{1}}\bar{u}\right)n_{2}+\mu\partial_{x_{2}}\bar{u}n_{1}dS\Big|\\
\leq&\,C\int_{|x_{1}|\leq\,R}\frac{|x_{1}|}{\varepsilon+|x_{1}|^{2}}dx_{1}\\
\leq&\,C|\log\varepsilon|.
\end{align*}
Therefore
$$|a_{11}^{12}|\leq|\mathrm{I}|+|\mathrm{II}|\leq\,C|\log\varepsilon|.$$

By definition \eqref{def_bj}, and ${u}_{B{u}_{B}}=\sum_{\alpha=1}^{2}C_{2}^{\alpha}v^{\alpha}+v_{0}$, we have
\begin{align*}
b_{1}^{\beta}&=\int_{\partial{D}_{1}}\frac{\partial{u}_{b}}{\partial\nu}\Big|_{+}\cdot\psi^{\beta}\\
&=C_{2}^{1}\int_{\partial{D}_{1}}\frac{\partial{v}^{1}}{\partial\nu}\Big|_{+}\cdot\psi^{\beta}+C_{2}^{2}\int_{\partial{D}_{1}}\frac{\partial{v}^{2}}{\partial\nu}\Big|_{+}\cdot\psi^{\beta}+\int_{\partial{D}_{1}}\frac{\partial{v}^{0}}{\partial\nu}\Big|_{+}\cdot\psi^{\beta}\\
&=C_{2}^{1}\mathrm{I}_{1}+C_{2}^{2}\mathrm{I}_{2}+\mathrm{I}_{0}.
\end{align*}
By using integration by parts, \eqref{equ4-1}, and \eqref{mainev0},
$$|\mathrm{I}_{0}|=|\int_{\partial{D}_{1}}\frac{\partial{v}^{0}}{\partial\nu}\Big|_{+}\cdot\psi^{\beta}|=|\int_{\widetilde{\Omega}}\Big(\mathbb{C}e(v_{0},e(v_{1}^{\beta}))\Big)|\leq\,C.$$
Similarly, by using \eqref{mainev1+v2}, we have
$$|\mathrm{I}_{1}|,|\mathrm{I}_{2}|\leq\,C.$$
Combining these with \eqref{maineC}, $|C_{i}^{\alpha}|\leq\,C$,  the proof of \eqref{bjbeta} is finished.

Finally, we prove that $A$ is positive definite. For $\xi=(\xi_{1},\xi_{2},\xi_{3},\xi_{4})^{T}\neq0$, by elliptic condition, \eqref{coeff3_convex}, we have
\begin{align*}
\xi^{T}
A\xi&=\int_{\widetilde{\Omega}}\Big(\mathbb{C}e(\xi_{1}v_{1}^{1}+\xi_{2}v_{1}^{2}+\xi_{3}v_{1}^{3}+\xi_{4}v_{2}^{3}),e(\xi_{1}v_{1}^{1}+\xi_{2}v_{1}^{2}+\xi_{3}v_{1}^{3}+\xi_{4}v_{2}^{3})\Big)\\
&\geq\int_{\widetilde{\Omega}}|e(\xi_{1}v_{1}^{1}+\xi_{2}v_{2}^{2}+\xi_{3}v_{1}^{3}+\xi_{4}v_{2}^{3})|^{2}>0.
\end{align*}
In the last inequality, we used the fact that $e(\xi_{1}v_{1}^{1}+\xi_{2}v_{1}^{2}+\xi_{3}v_{1}^{3}+\xi_{4}v_{2}^{3})$ is not identically zero. Indeed, if an element $\xi\in\Psi$ vanishes at two distinct  points  $\bar{x}^{1},\bar{x}^{2}$, then $\xi\equiv0$, see lemma 6.1 in \cite{bll2}. Namely, if $e(\xi_{1}v_{1}^{1}+\xi_{2}v_{1}^{2}+\xi_{3}v_{1}^{3}+\xi_{4}v_{2}^{3})=0$, then $\xi_{1}v_{1}^{1}+\xi_{2}v_{1}^{2}+\xi_{3}v_{1}^{3}+\xi_{4}v_{2}^{3}=\sum_{\alpha=1}^{3}a_{\alpha}\psi^{\alpha}$ for some constants $a_{\alpha}$. Since $\xi_{1}v_{1}^{1}+\xi_{2}v_{1}^{2}+\xi_{3}v_{1}^{3}+\xi_{4}v_{2}^{3}\big|_{\partial{\Omega}}=0$, and $\{\psi^{\alpha}\}_{\alpha=1,2,3}$ is linear independent, it follows that $a_{1}=a_{2}=a_{3}=0$. Since $\xi_{1}v_{1}^{1}+\xi_{2}v_{1}^{2}+\xi_{3}v_{1}^{3}+\xi_{4}v_{2}^{3}\big|_{\partial{D}_{2}}=\xi_{4}\psi^{3}$, so that $\xi_{4}=0$. While, by $\xi_{1}v_{1}^{1}+\xi_{2}v_{1}^{2}+\xi_{3}v_{1}^{3}+\xi_{4}v_{2}^{3}\big|_{\partial{D}_{1}}=\xi_{1}\psi^{1}+\xi_{2}\psi^{2}+\xi_{3}\psi^{3}$ and the same reason, $\xi_{1}=\xi_{2}=\xi_{3}=0$.  This is a contradiction. 
\end{proof}

From the fact that $A$ is positive definite, we know that its principle minor $A_{22}=\begin{pmatrix}
a_{11}^{33}&a_{12}^{33}\\
a_{21}^{33}&a_{22}^{33}
\end{pmatrix}$ is also positive definite. Furthermore, we have

\begin{lemma}\label{lem4.4}
There is a universal constant $C$, independent of $\varepsilon$, such that
$$\det A_{22}=a_{11}^{33}a_{22}^{33}-a_{12}^{33}a_{21}^{33}>\frac{1}{C}.$$
\end{lemma}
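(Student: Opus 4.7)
My plan is a limit-and-continuity argument: rewrite the entries of $A_{22}$ as an energy bilinear form, pass $\varepsilon\to 0$ to a universal positive definite limit matrix $A_{22}^*$, and invoke continuity of the map $\varepsilon\mapsto A_{22}(\varepsilon)$. The crucial structural feature enabling this is that in dimension two, $\psi^3(x)=x_1 e_2-x_2 e_1$ vanishes at the touching point $P_1=P_2=0$, so unlike for the translations $\psi^1,\psi^2$, the Dirichlet datum for $v_i^3$ is continuous across the degenerate limit.

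\emph{Step 1 (Gram matrix reformulation).} By the integration by parts formula \eqref{equ4-1} applied with $u=v_i^3$, $v=v_j^3$ together with the boundary conditions in \eqref{v1alpha},
$$a_{ij}^{33}=\int_{\widetilde\Omega}\bigl(\mathbb C^0\,e(v_i^3),e(v_j^3)\bigr)\,dx.$$
Hence $A_{22}$ is symmetric ($a_{12}^{33}=a_{21}^{33}$) and, being the principal $2\times 2$ submatrix of the positive definite matrix $A$ of Lemma \ref{lem_a_11d=2}, it is itself positive definite; so $\det A_{22}>0$ \emph{a priori}, and the content of the lemma is the uniformity in $\varepsilon$.

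\emph{Step 2 (Limit problem).} I introduce the limit functions $v_i^{*3}\in H^1(\widetilde\Omega^*;\mathbb R^2)$ as the unique weak solutions of $\mathcal L_{\lambda,\mu}v_i^{*3}=0$ in $\widetilde\Omega^*$ with $v_i^{*3}=\psi^3$ on $\partial D_i^*$ and $v_i^{*3}=0$ on $\partial D_j^*\cup\partial\Omega$ ($j\ne i$). Since $\psi^3(0)=0$, the Dirichlet data is continuous on $\partial D_1^*\cup\partial D_2^*$, so by theorem 1.1 of \cite{llby}, $v_i^{*3}$ is smooth near the origin. Set $a_{ij}^{*33}:=\int_{\widetilde\Omega^*}(\mathbb C^0\,e(v_i^{*3}),e(v_j^{*3}))\,dx$ and $A_{22}^*:=(a_{ij}^{*33})$.

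\emph{Step 3 (Convergence).} I would prove $a_{ij}^{33}\to a_{ij}^{*33}$ as $\varepsilon\to 0$ by the exact scheme of the proof of Lemma \ref{lem3.2}: bound $|v_i^3-v_i^{*3}|$ on $\partial V\setminus\mathcal C_{\varepsilon^{1/3}}$ via the mean value theorem and \eqref{mainevi3}, apply the Lam\'e maximum principle of \cite{mmn} on $V\setminus\mathcal C_{\varepsilon^{1/3}}$, and deduce boundary $L^\infty$ gradient bounds from \cite{adn}. Because $\psi^3(0)=0$, no $O(\varepsilon)$ ``translation jump'' of the boundary datum appears at the touching point, so the argument is actually cleaner than in Lemma \ref{lem3.2}, and the bilinear form is stable under the limit.

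\emph{Step 4 (Positivity of the limit).} Suppose $\xi=(\xi_1,\xi_2)\ne 0$ with $\xi^T A_{22}^*\xi=0$. By ellipticity \eqref{coeff4_strongelyconvex}, $e(\xi_1 v_1^{*3}+\xi_2 v_2^{*3})\equiv 0$ on $\widetilde\Omega^*$, hence $\xi_1 v_1^{*3}+\xi_2 v_2^{*3}=\sum_{\alpha=1}^{3}a_\alpha\psi^\alpha$. Restricting to $\partial\Omega$, where both $v_i^{*3}$ vanish, and invoking lemma 6.1 of \cite{bll2} (linear independence of the $\psi^\alpha$ on sets with noncollinear points), forces $a_1=a_2=a_3=0$. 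Restricting the identity to $\partial D_i^*$ then gives $\xi_i\psi^3\equiv 0$ on $\partial D_i^*$; since $\partial D_i^*$ contains points at distance $\ge r_0^*$ from the origin, $\psi^3\not\equiv 0$ there, so $\xi_i=0$. This contradicts $\xi\ne 0$, so $A_{22}^*$ is positive definite and $\det A_{22}^*=:c_0>0$ is a universal constant.

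Combining Steps 3 and 4 gives $\det A_{22}\ge c_0/2$ for all $\varepsilon$ below a universal threshold $\varepsilon_0$; for $\varepsilon\in[\varepsilon_0,1/2)$ the function $\varepsilon\mapsto \det A_{22}(\varepsilon)$ is continuous and strictly positive on a compact parameter range, hence also bounded below. The main obstacle is Step 3: the domains $\widetilde\Omega$ and $\widetilde\Omega^*$ differ in the thin neck where $|\nabla v_i^3|$ grows, and one must show the energy of $v_i^3-v_i^{*3}$ in the neck is $o(1)$. This is exactly what \eqref{mainevi3} provides (the bound $|\nabla v_i^3|\le C|x_1|/(\varepsilon+|x_1|^2)+C$ integrates to a uniformly bounded energy in 2D), combined with the $\varepsilon^{1/3}$ pointwise closeness propagated by the Lam\'e maximum principle.
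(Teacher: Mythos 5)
Your Steps 1, 2 and 4 are sound, and Step 4 is in fact the same non-degeneracy argument the paper uses (energy $=0$ forces a rigid motion, $\partial\Omega$ kills the rigid motion by lemma 6.1 of \cite{bll2}, and restriction to $\partial D_1^{*}$, $\partial D_2^{*}$ kills $\xi_1,\xi_2$). But your route to uniformity in $\varepsilon$ is genuinely different from the paper's, and it is precisely there (Step 3) that your justification has a gap. The paper does \emph{not} prove entrywise convergence $a_{ij}^{33}\to a_{ij}^{*33}$: it argues by contradiction, takes a sequence $\varepsilon_k\to 0$ along which $\xi^{T}A_{22}\xi\to 0$, uses the second Korn inequality (\cite{osy}) to get uniform $H^1$ bounds on $v_i^{3,\varepsilon_k}$ away from the neck, extracts weak limits in $H^{1}(\widetilde\Omega^{*}\setminus B_{\bar r})$, and only needs weak lower semicontinuity of the energy to conclude $e(\xi_1^{*}v_1^{*3}+\xi_2^{*}v_2^{*3})=0$ there. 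That soft argument completely avoids comparing the bilinear forms on the two different domains, which is the hard part of your plan.

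The concrete gap in your Step 3 is the claim that ``the exact scheme of the proof of Lemma \ref{lem3.2}'' gives $a_{ij}^{33}\to a_{ij}^{*33}$. The convergence proofs in Lemma \ref{lem3.2} and Proposition \ref{lemma5.1d=2} work because the quantities there (the sums $a^{\alpha\beta}=\sum_{i,j}a_{ij}^{\alpha\beta}$ and $b_1^{1,\alpha}$) can be rewritten, via \eqref{equ4-1} and the fact that $v^{\alpha}-\psi^{\alpha}$ (resp.\ $v_0$) vanishes on \emph{both} $\partial D_1$ and $\partial D_2$, as integrals over the fixed outer boundary $\partial\Omega$, where the $L^\infty$ closeness upgraded by \cite{adn} suffices. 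The individual entries $a_{ij}^{33}=\int_{\widetilde\Omega}(\mathbb C^{0}e(v_i^{3}),e(v_j^{3}))$ admit no such reduction, so you must compare energy integrals of gradients over $\widetilde\Omega$ and $\widetilde\Omega^{*}$, including the neck and thin layers between $\partial D_i$ and $\partial D_i^{*}$ where the two functions satisfy different boundary conditions on slightly different boundaries. Your stated justification does not close this: (i) ``uniformly bounded energy in the neck'' from \eqref{mainevi3} is not the required smallness (the energy in $\Omega_R$ is $O(R)$ with $R$ a \emph{fixed} constant; what is true and usable is that the energy in $\mathcal C_{\varepsilon^{1/3}}$ is $O(\varepsilon^{1/3})$); and (ii) the $C\varepsilon^{1/3}$ pointwise bound from the maximum principle of \cite{mmn} controls the functions, not their gradients, and the boundary gradient estimates of \cite{adn} are applied in the paper only on $\partial\Omega$, not up to the moving inclusion boundaries. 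To make your Step 3 rigorous you would need an additional multi-scale argument (interior gradient estimates turning the $\varepsilon^{1/3}$ closeness into gradient closeness at distance $\gtrsim\varepsilon^{\sigma}$ from $\partial D_i\cup\partial D_i^{*}$, plus the bound $|\nabla v_i^{3}|\le C|x'|/(\varepsilon+|x'|^{2})+C$ and its limit analogue to control the remaining thin layers and the cylinder). This is fixable but is real extra work, and it is exactly the work the paper's compactness argument is designed to avoid; your final continuity-in-$\varepsilon$ assertion on $[\varepsilon_0,1/2)$ is likewise stated without proof, though the paper is equally silent on that range.
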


\begin{proof}
From elliptic condition, \eqref{coeff3_convex}, it suffices to prove that 
$$\int_{\widetilde{\Omega}}|e(\xi_{1}v_{1}^{3}+\xi_{2}v_{2}^{3})|^{2}>\frac{1}{C},\quad\forall~\xi=(\xi_{1},\xi_{2})^{T}, ~|\xi|=1.$$
Indeed, if not, then there exist a sequence $\varepsilon_{k}\rightarrow0^{+}$, $|\xi^{k}|=1$, such that
\begin{equation}\label{convergence}
\int_{\widetilde{\Omega}}|e(\xi_{1}^{k}v_{1}^{3,\varepsilon_{k}}+\xi_{2}^{k}v_{2}^{3,\varepsilon_{k}})|^{2}\rightarrow0,\quad\mbox{as}~i\rightarrow\infty.
\end{equation}
Here we add superscript $\varepsilon_{k}$ to denote the solution of \eqref{v1alpha} when $\mathrm{dist}(D_{1},D_{2})=\varepsilon_{k}$. Since $v_{i}^{3,\varepsilon_{k}}\equiv0$ on $\partial\Omega$, it follows from the second KornÕs inequality  (see Theorem 2.5 in \cite{osy}) that there exists a constant C, independent of $\varepsilon_{k}$, such that
$$\|v_{i}^{3,\varepsilon_{k}}\|_{H^{1}(\widetilde\Omega\setminus{B}_{\bar{r}};\mathbb{R}^{d})}\leq\,C,\quad\,i=1,2,$$
for some $\bar{r}>0$ (say, $\bar{r}=R/2$). Then there exists a subsequence, still denote $\{v_{i}^{3,\varepsilon_{k}}\}$, such that
$$v_{i}^{3,\varepsilon_{k}}\rightharpoonup\,v_{i}^{*3},\quad\mbox{in}~H^{1}(\widetilde{\Omega}^{*}\setminus{B}_{\bar{r}};\mathbb{R}^{d}),\quad\mbox{as}~k\rightarrow+\infty,\quad\,i=1,2.$$
It follows from \eqref{convergence} that there exists $\xi^{*}$ such that
$$\xi^{k}\rightarrow\xi^{*},\quad\mbox{as}~k\rightarrow+\infty,\quad\mbox{with}~|\xi^{*}|=1,$$
and
$$\int_{\widetilde{\Omega}^{*}\setminus{B}_{\bar{r}}}|e(\xi_{1}^{*}v_{1}^{*3}+\xi_{2}^{*}v_{2}^{*3})|^{2}=0,$$
where $v_{i}^{*3}$, $i=1,2$, are defined by
\begin{align}\label{vi_alpha*}
\begin{cases}
  \mathcal{L}_{\lambda,\mu}v_{i}^{*\alpha}=0,\quad&
\hbox{in}\  \widetilde\Omega^{*},  \\
v_{i}^{*\alpha}=\psi^{\alpha},\ &\hbox{on}\ \partial{D}_{i}^{*}\setminus\{0\},\\
v_{i}^{*\alpha}=0,&\hbox{on} \ \partial{D}_{j}^{*}\cup\partial\Omega,~j\neq i,
\end{cases}
\end{align}
with $\alpha=3$. This implies that
$$e(\xi_{1}^{*}v_{1}^{*3}+\xi_{2}^{*}v_{2}^{*3})=0,\quad\mbox{in}~\widetilde{\Omega}^{*}\setminus{B}_{\bar{r}}.$$
Hence, 
$$\xi_{1}^{*}v_{1}^{*3}+\xi_{2}^{*}v_{2}^{*3}=\sum_{\alpha=1}^{3}a_{\alpha}\psi^{\alpha},\quad\mbox{in}~\widetilde{\Omega}^{*}\setminus{B}_{\bar{r}},$$
for some constants $a_{\alpha}$, $\alpha=1,2,3$. 

Since $\xi_{1}^{*}v_{1}^{*3}+\xi_{2}^{*}v_{2}^{*3}\Big|_{\partial{\Omega}}=0$, we have $a_{1}=a_{2}=a_{3}=0$. Indeed, if an element $v\in\Psi$ vanishes at two distinct  points  $\bar{x}^{1},\bar{x}^{2}$, then $v\equiv0$, see lemma 6.1 in \cite{bll2}. Namely, restricting on one part of $\partial{D}_{2}^{*}$, we have $\xi_{2}^{*}=0$. Restricting on one part of $\partial{D}_{1}^{*}$, we have $\xi_{1}^{*}=0$. This is a contradiction with $|\xi^{*}|=1$. The proof is finished.
\end{proof}

\begin{proof}[Proof of Proposition \ref{prop4.1}]
By the estimates in Lemma \ref{lem_a_11d=2}, it follows that $A$ is positive definite, and
$$\frac{1}{C\varepsilon}\leq\det A\leq\,\frac{C}{\varepsilon}.$$
So that $A$ is invertible, and 
$$C_{1}^{1}-C_{2}^{1}=\frac{b_{1}^{1}a_{11}^{22}(a_{11}^{33}a_{22}^{33}-a_{12}^{33}a_{21}^{33})}{\det A}+o(\sqrt{\varepsilon}),$$
and
$$C_{1}^{2}-C_{2}^{2}=\frac{b_{1}^{2}a_{11}^{11}(a_{11}^{33}a_{22}^{33}-a_{12}^{33}a_{21}^{33})}{\det A}+o(\sqrt{\varepsilon}).$$
On one hand, it is easy to obtain from Lemma \ref{lem_a_11d=2} again that the upper bound
$$|C_{1}^{\alpha}-C_{2}^{\alpha}|\leq\,C\sqrt{\varepsilon},\quad\alpha=1,2.$$
On the other hand, since $a_{11}^{33}a_{22}^{33}-a_{12}^{33}a_{21}^{33}\geq\frac{1}{C}$, then if $b_{1}^{\alpha}\neq0$, then
$$|C_{1}^{\alpha}-C_{2}^{\alpha}|=\frac{\sqrt{\varepsilon}}{C}|b_{1}^{\alpha}|+o(\sqrt{\varepsilon}).$$
Thus, Proposition \ref{prop4.1} is proved.
\end{proof}

Finally, we give the proof of Theorem \ref{thm1.2}.

\begin{proof}[Proof of Theorem \ref{thm1.2} in dimension two] By Proposition \ref{lemma5.1d=2}, we have
$$b_{1}^{\alpha}-b_{*1}^{\alpha}=O(\varepsilon^{1/3}),\quad\alpha=1,2.$$
Then if there is an $k_{0}\in\{1,2\}$ such that $b_{*1}^{k_{0}}\neq0$, then for sufficiently small $\varepsilon$, 
$$\left|\nabla{u}(x)\right|
\geq\left|\sum_{\alpha=1}^{2}(C_{1}^{\alpha}-C_{2}^{\alpha})\nabla{v}_{1}^{\alpha}(x)\right|
-C\geq\frac{|b_{1}^{k_{0}}|}{C\sqrt{\varepsilon}}\geq\frac{|b_{*1}^{k_{0}}+O(\varepsilon^{1/3})|}{C\sqrt{\varepsilon}}\geq\frac{|b_{*1}^{k_{0}}|}{C\sqrt{\varepsilon}}.
$$
The proof is finished.
\end{proof}

\bigskip

\section{Proof of Theorem \ref{thm1.2} in dimension three}\label{sec5}

In this section, we are devoted to proving Theorem \ref{thm1.2} in dimension three. Similarly, as in Section \ref{sec4}, we first give lower and upper bounds of estimates for $|C_{1}^{\alpha}-C_{2}^{\alpha}|$, $\alpha=1,2,3$. Here the selection from the whole system for $C_{i}^{\alpha}$, \eqref{C1C2_3}, is different from that in \cite{bll2}. 

\begin{prop}\label{prop5.1}If $b_{1}^{\alpha}[\varphi]\neq0$, then
\begin{equation}\label{C1-C2_d=3}
\frac{1}{C|\log\varepsilon|}\Big|b_{1}^{\alpha}[\varphi]\Big|+O(|\log\varepsilon|^{-2})\leq\big|C_{1}^{\alpha}-C_{2}^{\alpha}\big|\leq\,\frac{C}{|\log\varepsilon|},\quad\alpha=1,2.
\end{equation}
\end{prop}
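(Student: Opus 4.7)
The plan is to mirror the argument of Section~\ref{sec4}: set up the $9\times 9$ block system $AX=B$ that arises in dimension three from the selection in \eqref{C1C2_3}, establish the size of each of its entries, and apply Cramer's rule to extract the leading behaviour of $C_1^\alpha-C_2^\alpha$. With $d=3$, the unknown vector partitions as $X_1=(C_1^\alpha-C_2^\alpha)_{\alpha=1,2,3}$ (the three translational differences) and $X_2\in\mathbb{R}^6$ (the six rotational constants $C_i^4,C_i^5,C_i^6$, $i=1,2$), while $B_1=(b_1^1,b_1^2,b_1^3)^T$ sits atop $B_2\in\mathbb{R}^6$.

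The first task is a 3D analogue of Lemma~\ref{lem_a_11d=2}. Using the decomposition $v_i^\alpha=\bar{u}_i^\alpha+w_i^\alpha$, together with the pointwise bounds \eqref{nabla_ubar} and \eqref{nabla_w_ialpha} and the parametrization \eqref{x3}, I expect $a_{11}^{\alpha\alpha}\sim|\log\varepsilon|$ for the three translations $\alpha=1,2,3$ (now the relevant leading integral is the two-dimensional $\int_{|x'|\leq R}\delta(x')^{-1}\,dx'\sim|\log\varepsilon|$), $a_{ii}^{\alpha\alpha}\sim 1$ for the rotations $\alpha=4,5,6$, and $|a_{ij}^{\alpha\beta}|\leq C$ for all remaining off-diagonal and cross-block entries. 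The $O(1)$ bound on $b_j^\beta$ follows from integration by parts against the $v_i^\beta$, combined with \eqref{mainev0}--\eqref{mainev1+v2} and \eqref{maineC}, exactly as in the 2D case.

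Next, in direct analogy with Lemma~\ref{lem4.4}, I would show that the $6\times 6$ principal minor $A_{22}$ is positive definite with $\det A_{22}\geq 1/C$ uniformly in $\varepsilon$. The proof is a second-Korn-inequality compactness argument on $\widetilde\Omega\setminus B_{R/2}$: if the bound failed, along a sequence $\varepsilon_k\to 0$ a unit vector $\xi$ would produce a weak $H^1$ limit $\sum_\alpha\xi_\alpha v_i^{*\alpha}$ (with $v_i^{*\alpha}$ from \eqref{vi_alpha*}) whose strain vanishes on $\widetilde\Omega^*$, forcing it into $\Psi$; the boundary conditions on $\partial\Omega$ and $\partial D_i^*$, together with the fact that an element of $\Psi$ vanishing at two distinct points in $\mathbb{R}^3$ is identically zero (Lemma~6.1 in \cite{bll2}), then force $\xi=0$, a contradiction.

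Combining the above, expanding $\det A$ along the first three rows gives $\det A=(\det A_{11})(\det A_{22})+\text{(lower order)}\sim|\log\varepsilon|^3$, since $A_{11}$ is diagonal-dominant of size $|\log\varepsilon|$ while $A_{12}$ and $A_{21}$ have only bounded entries. Cramer's rule then produces
\[
C_1^\alpha-C_2^\alpha=\frac{b_1^\alpha\,a_{11}^{\beta\beta}\,a_{11}^{\gamma\gamma}\,\det A_{22}}{\det A}+O\!\left(|\log\varepsilon|^{-2}\right),\qquad\{\alpha,\beta,\gamma\}=\{1,2,3\},
\]
whose leading term has size $|b_1^\alpha|/|\log\varepsilon|$, giving both bounds in \eqref{C1-C2_d=3}. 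The main obstacle is precisely the off-diagonal bookkeeping: one must verify that the mixed translation entries $a_{11}^{\alpha\beta}$ ($\alpha\neq\beta$), the translation--rotation couplings in $A_{12}$ and $A_{21}$, and the $a_{ij}^{\alpha\beta}$ with $j\neq i$ are all of strictly smaller order than $|\log\varepsilon|$, so that their contributions land in the $O(|\log\varepsilon|^{-2})$ remainder rather than polluting the leading coefficient. This requires exploiting parity cancellation of $\partial_{x_k}\bar{u}$ in $x_k$ on $\partial D_1$, in the spirit of the sharper bound \eqref{a11_12d=2} already observed in the 2D case.
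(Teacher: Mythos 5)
Your proposal follows essentially the same route as the paper: the same selection $\beta=1,\dots,6$ for $j=1$ and $\beta=4,5,6$ for $j=2$ giving the $9\times9$ system, the same entry estimates ($a_{11}^{\alpha\alpha}\sim|\log\varepsilon|$ for $\alpha=1,2,3$, bounded off-diagonal and rotational entries, bounded $b_j^\beta$ via integration by parts), the same uniform lower bound $\det A_{22}\geq 1/C$ proved by a Korn-inequality compactness argument, and the identical Cramer's-rule formula with remainder $O(|\log\varepsilon|^{-2})$. One small correction: in $\mathbb{R}^{3}$ a rigid displacement vanishing at two distinct points need not vanish (a rotation about the axis through them survives); the rigidity fact from lemma 6.1 in \cite{bll2} used here requires three non-collinear points, which is harmless in your compactness step because the limiting rigid displacement vanishes on all of $\partial\Omega$, and the coefficients are then killed by the linear independence of $\psi^{4},\psi^{5},\psi^{6}$ on $\partial D_{1}^{*}$ and $\partial D_{2}^{*}$, exactly as in the paper. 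Also, the parity cancellation you invoke for the mixed translation entries is not needed in $d=3$: the naive bound $\int_{|x'|\le R}|x'|/(\varepsilon+|x'|^{2})\,dx'=O(1)$ already gives boundedness, which is all the Cramer expansion requires.
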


\subsection{Finer Estimates in Dimension $d=3$}

In order to solve $C_{1}^{1}-C_{2}^{1}$, $C_{1}^{2}-C_{2}^{2}$,  and $C_{1}^{3}-C_{2}^{3}$ from \eqref{C1C2_3}, we take $\beta=1,2,\cdots,6$ for $j=1$ and $\beta=4,5,6,$ for $j=2$. Then
$$
AX=\begin{pmatrix}
A_{11} & A_{12}\\
A_{21}& A_{22}
\end{pmatrix}
\begin{pmatrix}
X_{1}\\
X_{2}
\end{pmatrix}=
\begin{pmatrix}
B_{1}\\
B_{2}
\end{pmatrix},
$$
where 
$$A_{11}=\begin{pmatrix}
      a^{11}_{11} & a_{11}^{12} & a_{11}^{13} ~\\\\
      a_{11}^{21} & a_{11}^{22}  & a_{11}^{23} ~\\\\
      a_{11}^{31} & a_{11}^{32}  & a_{11}^{33} ~
    \end{pmatrix},
\quad
A_{12}=\begin{pmatrix}
      a_{11}^{\alpha\beta},a_{12}^{\alpha\beta}
    \end{pmatrix}_{\alpha=1,2,3;\beta=4,5,6},\quad
$$
$$ A_{21}=\begin{pmatrix}
     ~ a_{11}^{\alpha\beta}~ \\\\
      a_{21}^{\alpha\beta}
    \end{pmatrix}_{\alpha=4,5,6;\beta=1,2,3},\quad
 A_{22}=\begin{pmatrix}
     ~ a_{11}^{\alpha\beta} &a_{12}^{\alpha\beta}~\\\\
      a_{21}^{\alpha\beta}&a_{22}^{\alpha\beta}
    \end{pmatrix}_{\alpha,\beta=4,5,6}.
$$

$$X_{1}=\Big(C_{1}^{1}-C_{2}^{1}, C_{1}^{2}-C_{2}^{2}, C_{1}^{3}-C_{2}^{3}\Big)^{T},\quad
X_{2}=\Big(C_{1}^{4},C_{1}^{5},C_{1}^{6},C_{2}^{4},C_{2}^{5},C_{2}^{6}\Big)^{T},$$
and
$$B_{1}=\Big(b_{1}^{1},b_{1}^{2}, b_{1}^{3}\Big)^{T},\quad
B_{2}=\Big(b_{1}^{4},b_{1}^{5}, b_{1}^{6},b_{2}^{4},b_{2}^{5}, b_{2}^{6}\Big)^{T}.$$

\begin{lemma}(\cite{bll2})\label{lem_a_11}
$A$ is positive definite, and
\begin{equation*}\label{a11_1122}
\frac{|\log\varepsilon|}{C}\leq\,a_{11}^{\alpha\alpha}\leq\,C|\log\varepsilon|,\qquad\alpha=1,2,3;
\end{equation*}
\begin{equation*}\label{a11_456}
\frac{1}{C}\leq\,a_{ii}^{\alpha\alpha}\leq\,C,\quad\alpha=4,5,6,\quad\,i=1,2;
\end{equation*}
and
\begin{equation*}\label{a11_12}
\left|a_{ij}^{\alpha\beta}\right|=\left|a_{ji}^{\beta\alpha}\right|\leq\,C,\qquad\alpha,\beta=1,2,\cdots,\frac{d(d+1)}{2},~\alpha\neq\beta.
\end{equation*}
$$|b_{\beta}|\leq C, \quad\beta=1,2,\cdots,\frac{d(d+1)}{2}.$$
\end{lemma}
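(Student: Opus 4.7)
The plan is to reduce every entry of $A$ to a bilinear integral and then handle it with the narrow--neck analysis set up for $\bar u_i^{\alpha}$ together with Lemma \ref{prop1}. First, by the integration by parts formula \eqref{equ4-1} together with $\mathcal{L}_{\lambda,\mu}v_i^{\alpha}=0$ and the explicit boundary values of the $v_j^{\beta}$, one rewrites
\begin{equation*}
a_{ij}^{\alpha\beta}=\int_{\widetilde{\Omega}}\bigl(\mathbb{C}^{0}e(v_i^{\alpha}),\,e(v_j^{\beta})\bigr)\,dx,
\end{equation*}
which automatically gives the symmetry $a_{ij}^{\alpha\beta}=a_{ji}^{\beta\alpha}$. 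On $\widetilde{\Omega}\setminus\Omega_R$, the integrand is uniformly bounded by standard elliptic regularity, so the exterior contributes $O(1)$ to every entry. Inside $\Omega_R$ I would split $v_i^{\alpha}=\bar u_i^{\alpha}+(v_i^{\alpha}-\bar u_i^{\alpha})$ and treat the main term $\int_{\Omega_R}(\mathbb{C}^{0}e(\bar u_i^{\alpha}),e(\bar u_j^{\beta}))$ explicitly, while Cauchy--Schwarz combined with Lemma \ref{prop1} absorbs the cross and remainder terms (a computation in Lemma \ref{prop1}'s two-regime bound gives $\int_{\Omega_R}|\nabla(v_i^{\alpha}-\bar u_i^{\alpha})|^{2}=O(1)$ in $d=3$).

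For the diagonal translation entries $a_{11}^{\alpha\alpha}$, $\alpha=1,2,3$, the dominant strain component is $\partial_{x_3}\bar u\sim 1/\delta(x')$, and in dimension three
\begin{equation*}
\int_{\Omega_R}\frac{dx}{\delta(x')^{2}}\sim\int_{|x'|<R}\frac{dx'}{\varepsilon+|x'|^{2}}\sim|\log\varepsilon|,
\end{equation*}
yielding matching upper and lower bounds of order $|\log\varepsilon|$. For the diagonal rotation entries $a_{ii}^{\alpha\alpha}$, $\alpha=4,5,6$, the key observation is that $|\psi^{\alpha}(x)|\le C(|x'|+\varepsilon)$ in $\Omega_R$, so the factor $\psi^{\alpha}$ in $\bar u_i^{\alpha}=\bar u\psi^{\alpha}$ exactly offsets the singularity of $\partial_{x_3}\bar u$: using $(|x'|+\varepsilon)^{2}\le C\delta(x')$ one gets $|e(\bar u_i^{\alpha})|^{2}\le C/\delta(x')$, which integrates to a uniform $O(1)$, with a strictly positive lower bound coming from the boundary--data contribution on the outer region.

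For the off-diagonal entries I would compute $(\mathbb{C}^{0}e(\bar u_i^{\alpha}),e(\bar u_j^{\beta}))$ pointwise and use two mechanisms: (i) when $\alpha\ne\beta$ are both translational, the leading singular contribution is of the form $(\lambda+\mu)(\partial_{x_k}\bar u)(\partial_{x_\ell}\bar u)$ with $\{k,\ell\}\ne\{3,3\}$, and is dominated by $|x_k||x_\ell|/(\varepsilon+|x'|^{2})^{2}$, which integrates to $O(1)$; (ii) when a rotational $\psi^{\beta}$ appears, the bound $|\psi^{\beta}|\le C(|x'|+\varepsilon)$ again kills the log divergence. The bound $|b_{\beta}|\le C$ follows directly from $u_b=\sum_{\alpha=1}^{d}C_{2}^{\alpha}v^{\alpha}+v_{0}$, the uniform gradient estimates \eqref{mainev0}--\eqref{mainev1+v2}, and $|C_{2}^{\alpha}|\le C$ from \eqref{maineC}, which give $\|\nabla u_b\|_{L^{\infty}(\widetilde{\Omega})}\le C$ and hence $|b_j^{\beta}|\le C|\partial D_j|\,\|\psi^{\beta}\|_{L^{\infty}(\partial D_j)}\le C$. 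Positive definiteness of $A$ then follows as in the $d=2$ argument: $\xi^{T}A\xi$ equals the Dirichlet energy of $\sum_{i,\alpha}\xi_{i,\alpha}v_i^{\alpha}$, which can only vanish for a rigid motion; since this combination vanishes on $\partial\Omega$ and $\{\psi^{\alpha}\}$ is linearly independent on $\partial D_1,\partial D_2$, one forces $\xi=0$.

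The main obstacle is the off-diagonal translation--translation bound: naive Cauchy--Schwarz on the full integral gives only $\sqrt{a_{11}^{\alpha\alpha}a_{11}^{\beta\beta}}\sim|\log\varepsilon|$, which is too weak, so one must carry out the explicit pointwise calculation of $(\mathbb{C}^{0}e(\bar u_1^{\alpha}),e(\bar u_1^{\beta}))$ (in the spirit of the computation of \eqref{a11_12d=2} for $d=2$) to exhibit the $|x_k x_\ell|$ factor that kills the logarithmic divergence and yields $O(1)$. A secondary technical point is to check that replacing $v_i^{\alpha}$ by $\bar u_i^{\alpha}$ in the off-diagonal bilinear form introduces only $O(1)$ errors; this is handled by writing the cross term as $\int(\mathbb{C}^{0}e(\bar u_i^{\alpha}),e(v_j^{\beta}-\bar u_j^{\beta}))$ and applying Cauchy--Schwarz against the $O(1)$ bound on $\int|\nabla(v_j^{\beta}-\bar u_j^{\beta})|^{2}$ from Lemma \ref{prop1}.
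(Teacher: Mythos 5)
Your overall strategy --- rewrite each entry of $A$ as the energy bilinear form $\int_{\widetilde\Omega}(\mathbb{C}^0e(v_i^\alpha),e(v_j^\beta))$, substitute $\bar u_i^\alpha$ in the neck, and control the remainders via Lemma \ref{prop1} --- is sound, and it is essentially the route of \cite{bll2}; note that the paper itself does not reprove these $a$-estimates here (it quotes them from \cite{bll2}) and only sketches the bound on $b_1^\beta$ by reference to the $d=2$ computation in Lemma \ref{lem_a_11d=2}. Your proof of $|b_\beta|\le C$ via $\|\nabla u_b\|_{L^\infty(\widetilde\Omega)}\le C$ (from \eqref{mainev0}, \eqref{mainev1+v2}, \eqref{maineC}) is correct and even more direct than the paper's integration-by-parts route, and your Gram-matrix argument for positive definiteness matches the paper's $d=2$ argument; for $d=3$ you should invoke explicitly the three-non-coplanar-points version of the rigid-motion lemma (lemma 6.1 of \cite{bll2}), as in Lemma \ref{lem5.2}.

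The genuine gap is in the off-diagonal bound $|a_{ij}^{\alpha\beta}|\le C$, $\alpha\neq\beta$, at the point where you dispose of the cross terms. Applying $L^2$ Cauchy--Schwarz to $\int_{\Omega_R}(\mathbb{C}^0e(\bar u_1^\alpha),e(v_1^\beta-\bar u_1^\beta))$ with a translational $\alpha$ gives $\bigl(\int_{\Omega_R}|e(\bar u_1^\alpha)|^2\bigr)^{1/2}\bigl(\int_{\Omega_R}|\nabla(v_1^\beta-\bar u_1^\beta)|^2\bigr)^{1/2}\le C\sqrt{|\log\varepsilon|}$, i.e. only $O(\sqrt{|\log\varepsilon|})$, not the claimed $O(1)$ (this weaker bound would still suffice for Proposition \ref{prop5.1}, but not for the lemma as stated). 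The repair is a pointwise product estimate: by \eqref{nabla_w_ialpha}, $|e(\bar u_1^\alpha)|\,|\nabla(v_1^\beta-\bar u_1^\beta)|\le C\,\delta(x')^{-1}\max(\sqrt{\varepsilon},|x'|)^{-1}$, and integrating over the slab of thickness $\delta(x')$ yields $\int_{|x'|<R}\max(\sqrt{\varepsilon},|x'|)^{-1}dx'=O(1)$ in $d=3$; alternatively one can work on the boundary integral directly, as in the proof of \eqref{a11_12d=2}. Two smaller points: your claimed domination of the main term by $|x_k||x_\ell|/(\varepsilon+|x'|^2)^2$ fails when one index is the normal direction, since $\partial_{x_3}\bar u\sim\delta^{-1}$ is not controlled by $|x_3|/(\varepsilon+|x'|^2)^2$ near $x_3=0$; the correct bound there is $C|x_k|/(\varepsilon+|x'|^2)^2$, which still integrates to $O(1)$ for $d=3$. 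And the lower bound $a_{ii}^{\alpha\alpha}\ge 1/C$ for the rotational indices needs an actual argument (e.g. the compactness/rigid-motion contradiction used in Lemmas \ref{lem4.4} and \ref{lem5.2}), not just an appeal to ``the boundary-data contribution on the outer region''.
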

Therefore,
\begin{equation*}\label{a11_det}
\frac{|\log\varepsilon|^{3}}{C}\leq\det{A_{11}}\leq\,C\,|\log\varepsilon|^{3}.
\end{equation*}
\begin{proof}
The estimate of $b_{1}^{\beta}$ can be proved by a very similar way as in the proof of Lemma \ref{lem_a_11d=2}. We omit it here.
\end{proof}

\begin{lemma}\label{lem5.2}
There is a universal constant $C$ such that, for any $\xi=(\xi^{1},\xi^{2},\cdots,\xi^{6})^{T}\neq0$,
$$\xi^{T}A_{22}\xi>\frac{1}{C}|\xi|^{2}.$$
\end{lemma}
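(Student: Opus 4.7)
The plan is to follow the same contradiction/compactness strategy used in the proof of Lemma~\ref{lem4.4}, adapted to the six-dimensional block $A_{22}$ that collects the $\alpha,\beta\in\{4,5,6\}$ entries for both inclusions. The first step is to rewrite the quadratic form as an energy integral. Given $\xi=(\xi^{1},\dots,\xi^{6})^{T}\in\mathbb{R}^{6}$, set
\begin{equation*}
w_\xi := \sum_{\alpha=4}^{6}\xi^{\alpha-3}\,v_{1}^{\alpha} + \sum_{\alpha=4}^{6}\xi^{\alpha}\,v_{2}^{\alpha}\qquad\text{in }\widetilde{\Omega}.
\end{equation*}
Using the definition \eqref{def_bj} of $a_{ij}^{\alpha\beta}$, the integration by parts formula \eqref{equ4-1}, and the fact that $v_i^\alpha$ vanishes on $\partial\Omega$ and on $\partial D_j$ for $j\neq i$, a direct bilinear expansion gives
\begin{equation*}
\xi^{T}A_{22}\xi \;=\; \int_{\widetilde{\Omega}}\bigl(\mathbb{C}^{0}e(w_\xi),e(w_\xi)\bigr)\,dx \;\geq\; \delta_{0}\int_{\widetilde{\Omega}}|e(w_\xi)|^{2}\,dx,
\end{equation*}
by the ellipticity assumption \eqref{coeff4_strongelyconvex}. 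Thus it suffices to prove a uniform lower bound $\int_{\widetilde\Omega}|e(w_\xi)|^{2}\geq \tfrac{1}{C}|\xi|^{2}$.

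Suppose for contradiction that no such universal $C$ exists. Then there are $\varepsilon_{k}\to 0^{+}$ and unit vectors $\xi^{k}\in\mathbb{R}^{6}$ such that, writing $w^{k}:=w_{\xi^{k}}$ for the solution associated to the $\varepsilon_{k}$-configuration,
\begin{equation*}
\int_{\widetilde{\Omega}_{\varepsilon_{k}}}|e(w^{k})|^{2}\,dx \longrightarrow 0.
\end{equation*}
Fix $\bar r=R/2$. Since $w^{k}=0$ on $\partial\Omega$ and the domains $\widetilde\Omega_{\varepsilon_k}\setminus B_{\bar r}$ stabilize to $\widetilde\Omega^{*}\setminus B_{\bar r}$, the second Korn inequality (Theorem~2.5 in \cite{osy}) gives a uniform bound $\|w^{k}\|_{H^{1}(\widetilde\Omega_{\varepsilon_k}\setminus B_{\bar r};\mathbb{R}^{3})}\leq C$. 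After passing to a subsequence I may assume $\xi^{k}\to\xi^{*}$ with $|\xi^{*}|=1$, and, arguing as in Lemma~\ref{lem4.4}, $v_{i}^{\alpha,\varepsilon_{k}}\rightharpoonup v_{i}^{*\alpha}$ weakly in $H^{1}(\widetilde\Omega^{*}\setminus B_{\bar r};\mathbb{R}^{3})$ for $\alpha=4,5,6$, where $v_{i}^{*\alpha}$ solves the limit system \eqref{vi_alpha*}. Consequently the weak limit
\begin{equation*}
w^{*} := \sum_{\alpha=4}^{6}\xi^{*\alpha-3}\,v_{1}^{*\alpha} + \sum_{\alpha=4}^{6}\xi^{*\alpha}\,v_{2}^{*\alpha}
\end{equation*}
satisfies $\int_{\widetilde\Omega^{*}\setminus B_{\bar r}}|e(w^{*})|^{2}\,dx=0$ by weak lower semicontinuity.

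Hence $e(w^{*})=0$ in $\widetilde\Omega^{*}\setminus B_{\bar r}$, so $w^{*}=\sum_{\beta=1}^{6}a_{\beta}\psi^{\beta}$ on that connected set for some constants $a_{\beta}$. The boundary condition $w^{*}=0$ on $\partial\Omega$ combined with lemma~6.1 of \cite{bll2} (a rigid displacement vanishing at enough points must be identically zero) forces $a_{1}=\cdots=a_{6}=0$, i.e.\ $w^{*}\equiv 0$ in $\widetilde\Omega^{*}\setminus B_{\bar r}$. Restricting to $\partial D_{1}^{*}\setminus B_{\bar r}$, where $v_{2}^{*\alpha}=0$, we obtain $\sum_{\alpha=4}^{6}\xi^{*\alpha-3}\psi^{\alpha}=0$ on a piece of $\partial D_{1}^{*}$; applying lemma~6.1 of \cite{bll2} once more yields $\xi^{*1}=\xi^{*2}=\xi^{*3}=0$. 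The symmetric argument on $\partial D_{2}^{*}\setminus B_{\bar r}$ yields $\xi^{*4}=\xi^{*5}=\xi^{*6}=0$. This contradicts $|\xi^{*}|=1$ and completes the proof.

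The only delicate step is the last one, namely, verifying that the restrictions of $w^{*}\equiv 0$ to $\partial D_{i}^{*}$ (minus the single touching point and the ball $B_{\bar r}$) leave enough geometric room to invoke lemma~6.1 of \cite{bll2}; this relies on the smoothness and strict convexity \eqref{h1h22} of $\partial D_{i}$, which ensure that $\partial D_{i}^{*}\setminus B_{\bar r}$ contains pairs of distinct points not lying on the axis of any rigid rotation, so that the only rigid motion vanishing there is the trivial one.
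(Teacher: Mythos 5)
Your proposal is correct and follows essentially the same route as the paper: the paper proves Lemma \ref{lem5.2} by repeating the compactness/contradiction argument of Lemma \ref{lem4.4} (energy identity for $\xi^{T}A_{22}\xi$, uniform Korn bound, weak convergence to the limit functions $v_{i}^{*\alpha}$ of \eqref{vi_alpha*}, identification of the limit as a rigid displacement, which must vanish since it vanishes on $\partial\Omega$, and then restriction to $\partial D_{1}^{*}$ and $\partial D_{2}^{*}$ with the linear independence of $\psi^{4},\psi^{5},\psi^{6}$ to force $\xi^{*}=0$). Your write-up merely spells out the limit process that the paper omits by reference to Lemma \ref{lem4.4}, with only cosmetic differences (applying Korn's inequality to the combination $w^{k}$ rather than to the individual $v_{i}^{\alpha,\varepsilon_{k}}$).
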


\begin{proof}The proof is similar to that of Lemma \ref{lem4.4}. We omit the limit process, since it is the same. After it, if there exists a vector
$\xi^{*}=(\xi^{*}_{1},\xi^{*}_{2},\cdots,\xi^{*}_{6})^{T}$ with $|\xi^{*}|=1$, such that 
\begin{align*}
e(\sum_{\alpha=1}^{3}\xi^{*}_{\alpha}v_{1}^{*(\alpha+3)}+\sum_{\alpha=4}^{6}\xi^{*}_{\alpha}v_{2}^{*\alpha})=0,\quad\mbox{in}~\widetilde{\Omega}^{*}\setminus{B}_{\bar{r}},
\end{align*}
where $v_{i}^{*\beta}$ are defined in \eqref{vi_alpha*} for $\beta=4,5,6$.
Indeed, by using lemma 6.1 in \cite{bll2} again, if an element $v\in\Psi$ vanishes at three distinct  points  $\bar{x}^{1},\bar{x}^{2}$, and $\bar{x}^{3}$, which are not on a plane, then $v\equiv0$. Namely, there exist $a_{\alpha}$, $\alpha=1,2,\cdots,6$, such that
$$\sum_{\alpha=1}^{3}\xi^{*}_{\alpha}v_{1}^{*(\alpha+3)}+\sum_{\alpha=4}^{6}\xi^{*}_{\alpha}v_{2}^{*\alpha}=\sum_{\alpha=1}^{6}a_{\alpha}\psi^{\alpha}.$$
Recalling that $v_{i}^{*\alpha}=0$ on $\partial\Omega$, we have $a_{\alpha}\equiv0$, $\alpha=1,2,\cdots,6$. Restricting on one part of $\partial{D}_{1}^{*}$, in view of the linear independence of $\psi^{4},\psi^{5}$ and $\psi^{6}$ on $\partial{D}_{1}^{*}$, we have $\xi^{*}_{1}=\xi^{*}_{2}=\xi^{*}_{3}=0$. By the same reason on $\partial{D}_{2}$, $\xi^{*}_{4}=\xi^{*}_{5}=\xi^{*}_{6}=0$. This is a contradiction with $|\xi^{*}|=1$. 
\end{proof}

\begin{proof}[Proof of Proposition \ref{prop5.1}]
By the estimates in Lemma \ref{lem_a_11}, it follows that 
$$\frac{|\log\varepsilon|^{3}}{C}\leq\det A\leq\,C|\log\varepsilon|^{3},$$
and $A$ is invertible, then by Cramer's rule,
$$C_{1}^{1}-C_{2}^{1}=\frac{b_{1}^{1}a_{11}^{22}a_{11}^{33}\det A_{22}}{\det A}+O(|\log\varepsilon|^{-2}),$$
$$C_{1}^{2}-C_{2}^{2}=\frac{b_{1}^{2}a_{11}^{11}a_{11}^{33}\det A_{22}}{\det A}+O(|\log\varepsilon|^{-2}),$$
and
$$C_{1}^{3}-C_{2}^{3}=\frac{b_{1}^{3}a_{11}^{11}a_{11}^{22}\det A_{22}}{\det A}+O(|\log\varepsilon|^{-2}).$$

On one hand, it is easy to obtain from Lemma \ref{lem_a_11} again that the upper bound
$$|C_{1}^{\alpha}-C_{2}^{\alpha}|\leq\,\frac{C}{|\log\varepsilon|},\quad\alpha=1,2,3.$$
On the other hand, since $\det A_{22}\geq\frac{1}{C}$, then if $b_{1}^{\alpha}\neq0$, then
$$|C_{1}^{\alpha}-C_{2}^{\alpha}|\geq\frac{|b_{1}^{\alpha}|}{C|\log\varepsilon|}+O(|\log\varepsilon|^{-2}).$$
Thus, Proposition \ref{prop5.1} is proved.
\end{proof}

\begin{proof}[Proof of Theorem \ref{thm1.2} in dimension three] For $d=3$, using the fact that $|\nabla{v}_{i}^{\beta}(0',x_{d})|=0$ for $|x_{d}|<\varepsilon/2$, $\beta=4,5,6$, and estimates \eqref{mainev0}, \eqref{mainev1+v2}, \eqref{mainevi3}, and \eqref{C1-C2_d=3}, we have for $x=(0',x_{d})\in\widetilde{\Omega}$,
\begin{align}\label{nablaud=3}
|\nabla u(x)|=&\,\Big|~\sum_{\alpha=1}^{3}\left(C_{1}^{\alpha}-C_{2}^{\alpha}\right)\nabla{v}_{1}^{\alpha}+\sum_{\alpha=1}^{3}C_{2}^{\alpha}\nabla(v_{1}^{\alpha}+v_{2}^{\alpha})
+\sum_{i=1}^{2}\sum_{\alpha=4}^{6}C_{i}^{\alpha}\nabla{v}_{i}^{\alpha}+\nabla{v}_{0}~\Big|(x)\nonumber\\
\geq&\,\Big|~\sum_{\alpha=1}^{3}\left(C_{1}^{\alpha}-C_{2}^{\alpha}\right)\nabla{v}_{1}^{\alpha}(x)
~\Big|-\left(\sum_{\alpha=1}^{3}|C_{2}^{\alpha}|\,|\nabla(v_{1}^{\alpha}+v_{2}^{\alpha})(x)|
+|\nabla{v}_{0}(x)|+C\right)\nonumber\\
\geq&\,\Big|~\sum_{\alpha=1}^{3}\left(C_{1}^{\alpha}-C_{2}^{\alpha}\right)\nabla{v}_{1}^{\alpha}(x)
~\Big|-C\nonumber\\
\geq&\,\Big|~\sum_{\alpha=1}^{3}\left(C_{1}^{\alpha}-C_{2}^{\alpha}\right)\nabla\bar{u}_{1}^{\alpha}(x)
~\Big|-C.
\end{align}
For $|x_{d}|<\varepsilon/2$,
\begin{equation}\label{C1-C2_ubar}
\sum_{\alpha=1}^{2}\left(C_{1}^{\alpha}-C_{2}^{\alpha}\right)\nabla\bar{u}_{1}^{\alpha}(0',x_{d})
=\frac{1}{\varepsilon}
\begin{pmatrix}
0 &C_{1}^{1}-C_{2}^{1}~\\\\
~0 & C_{1}^{2}-C_{2}^{2}~\\\\
~0 & C_{1}^{3}-C_{2}^{3}~
\end{pmatrix}.
\end{equation}
Therefore, it suffices to obtain a positive lower bound of $\left|C_{1}^{1}-C_{2}^{1}\right|$, $\left|C_{1}^{2}-C_{2}^{2}\right|$ or $\left|C_{1}^{3}-C_{2}^{3}\right|$.

If $b_{*1}^{k_0}\neq0$, for some integer $1\leq\,k_0\leq\,3$, then it follows from Proposition \ref{lemma5.1d=2} that there exists a universal constant $C_{0}>0$ and a sufficiently small number $\varepsilon_0>0$, such that, for $0<\varepsilon<\varepsilon_0$,
\begin{align*}\label{bk0}
|b_{1}^{k_0}|>\frac{1}{C_{0}}.
\end{align*}
By (\ref{C1-C2_d=3}), for sufficiently small $\varepsilon$, 
 \begin{align*}
 |C_{1}^{k_0}-C_{2}^{k_0}|\geq\frac{|b_{1}^{k_0}|}{C|\log\varepsilon|}\geq\frac{|b_{*1}^{k_0}|}{C|\log\varepsilon|}.
 \end{align*}
Combining it with \eqref{nablaud=3} and \eqref{C1-C2_ubar} immediately yields that 
\begin{align*}
|\nabla u(0', x_{d})|\geq\frac{|b_{*1}^{k_0}|}{C\varepsilon|\log\varepsilon|},\quad |x_{d}|<\varepsilon/2.
\end{align*}
Theorem \ref{thm1.2} is thus established.
\end{proof}

\bigskip

\section{Proof of Theorem \ref{thm1.3}}\label{sec6}

\subsection{Decomposition of $u$}

We make use of the following decomposition as in \cite{bly1}, 
\begin{equation}\label{decom_u2}
u=C_{1}v_{1}+C_{2}v_{2}+v_{0},\qquad\mbox{in}~\widetilde{\Omega},
\end{equation}
where $v_{i}\in{C}^{1}(\overline{\widetilde{\Omega}})$, $i=1,2,0$, are, respectively, the solutions of
\begin{equation}\label{v12}
\begin{cases}
\Delta v_{i}=0,&\mbox{in}~\widetilde{\Omega},\\
v_{i}=1,&\mbox{on}~\partial{D}_{i},\\
v_{i}=0,&\mbox{on}~\partial\widetilde{\Omega}\setminus\partial{D}_{i},
\end{cases}
\end{equation}
and
\begin{equation}\label{v0}
\begin{cases}
\Delta v_{0}=0,&\mbox{in}~\widetilde{\Omega},\\
v_{0}=0,&\mbox{on}~\partial{D}_{1}\cup\partial{D}_{2},\\
v_{0}=\varphi,&\mbox{on}~\partial{\Omega}.
\end{cases}
\end{equation}
The constants $C_{i}:=C_{i}(\varepsilon)$, $i=1,2$, in \eqref{decom_u2}, are uniquely determined by $u$.

Denote
$$u^{b}:=C_{2}(v_{1}+v_{2})+v_{0},$$
then $u^{b}$ verifies
\begin{equation}\label{ub}
\begin{cases}
\Delta u^{b}=0,&\mbox{in}~\widetilde{\Omega},\\
u^{b}=C_{2},&\mbox{on}~\partial{D}_{1}\cup\partial{D}_{2},\\
u^{b}=\varphi,&\mbox{on}~\partial{\Omega}.
\end{cases}
\end{equation}
By using theorem 1.1 in \cite{llby} again, we have 
\begin{equation}\label{ub_bdd}
|\nabla u^{b}|\leq\,C,
\end{equation}
where $C$ is a universal constant, independent of $\varepsilon$.  

In view of the decomposition \eqref{decom_u2}, we write
\begin{equation}\label{nablau_dec2}
\nabla{u}=\left(C_{1}-C_{2}\right)\nabla{v}_{1}
+\nabla{u}^{b},\quad\mbox{in}~\widetilde{\Omega}.
\end{equation}
It follows from the forth line of \eqref{mainequation2} that
\begin{align}\label{C1C2_22}
(C_{1}-C_{2})\int_{\partial{D}_{j}}\frac{\partial{v}_{1}}{\partial\nu}\Big|_{+}
+\int_{\partial{D}_{j}}\frac{\partial{u}^{b}}{\partial\nu}\Big|_{+}=0,\quad\,j=1,2.
\end{align}
Denote 
\begin{align*}
a_{ij}:=-\int_{\partial{D}_{j}}\frac{\partial{v}_{i}}{\partial\nu}\Big|_{+},\qquad
b_{j}:=b_{j}[\varphi]=\int_{\partial{D}_{j}}\frac{\partial{u}^{b}}{\partial\nu}\Big|_{+},\qquad\,i,j=1,2.
\end{align*}
Then \eqref{C1C2_22} can be written as
\begin{equation}\label{equ_abc2}
\begin{cases}
 a_{11}(C_{1}-C_{2}) =b_{1},\\
 a_{12}(C_{1}-C_{2}) =b_{2}.
 \end{cases}
\end{equation}

Recalling the definitions of $v_{1},v_{2}$ and by using the integration by parts, we have
$$a_{ij}=-\int_{\partial{D}_{j}}\frac{\partial{v}_{i}}{\partial\nu}\Big|_{+}=\int_{\widetilde\Omega}\nabla v_{i}\cdot\nabla v_{j},~\mbox{and}\quad~b_{j}=\int_{\partial{D}_{j}}\frac{\partial{u}^{b}}{\partial\nu}\Big|_{+}=\int_{\widetilde\Omega}\nabla u^{b}\cdot\nabla v_{j},\,i,j=1,2.$$
By the same argument for the estimate of $|\nabla v_{1}^{1}|$ in Proposition \ref{prop_gradient}, or the estimate of $|\nabla v_{1}|$ in Proposition 3.1 in \cite{lx}, one can see
\begin{equation}\label{estimate_w}
\|\nabla(v_{1}-\bar{u})\|_{L^{\infty}(\Omega_{R/2})}\leq\,C,\quad\|\nabla(v_{2}-\underline{u})\|_{L^{\infty}(\Omega_{R})}\leq\,C.
\end{equation}
So that
\begin{equation}\label{nablav12}
\frac{1}{C(\varepsilon+|x'|^{2})}\leq|\nabla v_{i}(x)|\leq\frac{C}{\varepsilon+|x'|^{2}},\quad\,i=1,2,\quad x\in\Omega_{R},
\end{equation}
and
\begin{equation}\label{v_out}
\|\nabla{v}_{i}\|_{L^{\infty}(\widetilde{\Omega}\setminus\Omega_{R/2})}\leq\,C,\quad\,i=1,2.
\end{equation}
Therefore,
\begin{equation}\label{a11}
\frac{1}{C\rho_{d}(\varepsilon)}\leq a_{11}\leq\frac{C}{\rho_{d}(\varepsilon)},\quad\frac{1}{C\rho_{d}(\varepsilon)}\leq -a_{12}\leq\frac{C}{\rho_{d}(\varepsilon)},
\end{equation}
and in view of \eqref{ub_bdd},
\begin{equation}\label{bj}
|b_{j}|\leq C,\quad\,j=1,2.
\end{equation}

\subsection{Proof of Theorem \ref{thm1.3}}

\begin{proof}[Proof of Theorem \ref{thm1.3}]
If $b_{1}=0$, then by solving \eqref{equ_abc2}, we have $C_{1}-C_{2}=0$, since $a_{11}>0$. Hence, it follows from \eqref{nablau_dec2} and \eqref{ub_bdd} that $|\nabla u|=|\nabla u^{b}|\leq\,C$. Therefore, blow-up does not occur. 

If $b_{1}\neq0$, then solving \eqref{equ_abc2}, we have
\begin{equation}\label{C1-C2b0}
|C_{1}-C_{2}|=\frac{|b_{1}|}{a_{11}}.
\end{equation}
In view of \eqref{a11} and \eqref{bj}, we have
\begin{equation}\label{C1-C2b}
|C_{1}-C_{2}|\leq\,C\rho_{d}(\varepsilon),
\end{equation}
and
\begin{equation}\label{C1-C2blower}
|C_{1}-C_{2}|\geq\,C|b_{1}|\rho_{d}(\varepsilon), \quad\mbox{if}~b_{1}\neq0.
\end{equation}

It follows from \eqref{nablau_dec2}, \eqref{ub_bdd}, \eqref{nablav12} and \eqref{C1-C2b} that
\begin{equation}\label{nablau}
\left|\nabla{u}\right|
\leq\left|C_{1}-C_{2}\right|\left|\nabla{v}_{1}\right|
+\big|\nabla{u}^{b}\big|\leq\frac{C\rho_{d}(\varepsilon)}{\varepsilon+|x'|^{2}},\quad\mbox{in}~\Omega_{R},
\end{equation}
and on the other hand, by \eqref{C1-C2blower},
\begin{equation}\label{nablau_lower}
\left|\nabla{u}(0',x_{d})\right|
\geq\left|(C_{1}-C_{2})\nabla{v}_{1}(0',x_{d})\right|
-C\geq\frac{|b_{1}|\rho_{d}(\varepsilon)}{C\varepsilon},\quad|x_{d}|<\frac{\varepsilon}{2}.
\end{equation}
By using Proposition \ref{lem5.1} below, $b_{j}\rightarrow b_{j}^{*}$, as $\varepsilon\rightarrow0$, it follows from \eqref{nablau_lower} that for sufficiently small $\varepsilon$,
$$|\nabla u(x)|\Big|_{\overline{P_{1}P_{2}}}\geq\frac{|b_{1}|\rho_{d}(\varepsilon)}{C\varepsilon}\geq\frac{|b_{1}^{*}|\rho_{d}(\varepsilon)}{C\varepsilon}.$$
Thus, $b_{1}^{*}$ is a blow-up factor. If $|b_{1}^{*}|\neq0$, then we obtain the lower bound of $|\nabla u(x)|$ on $\overline{P_{1}P_{2}}$ and complete the proof of Theorem \ref{thm1.3}.
\end{proof}

\subsection{The blow-up factors $b_{j}^{*}[\varphi]$}

In order to characterize the limit properties of $b_{1}$ (or $b_{2}$), we consider the following limit problem. Let $u^{*}$ be the solution of
\begin{equation}\label{equ_u*}
\begin{cases}
\Delta u^{*}=0,&\mbox{in}~\widetilde{\Omega}^{*},\\
u^{*}=C^{*},&\mbox{on}~\partial{D}^{*}_{1}\cup\partial{D}^{*}_{2},\\
\int_{\partial{D}_{1}}\frac{\partial{u}^{*}}{\partial\nu}\Big|_{+}+\int_{\partial{D}_{2}}\frac{\partial{u}^{*}}{\partial\nu}\Big|_{+}
=0,&\\
u^{*}=\varphi,&\mbox{on}~\partial{\Omega}.
\end{cases}
\end{equation}
where, by the uniqueness of the solution and \eqref{C1-C2b}, we have $C^{*}=\frac{1}{2}(C_{1}+C_{2})$. Define 
$$b_{j}^{*}:=b_{j}^{*}[\varphi]=\int_{\partial{D}_{j}^{*}}\frac{\partial u^{*}}{\partial\nu}.$$
Then 
\begin{prop}\label{lem5.1}
For $d=2,3$,
\begin{equation}\label{difference_alpha}
|b_{j}-b_{j}^{*}|=\left|\int_{\partial{D}_{j}}\frac{\partial{u}^{b}}{\partial\nu}ds-\int_{\partial{D}_{j}^{*}}\frac{\partial{u}^{*}}{\partial\nu}ds\right|\leq\,C\sqrt{\rho_{d}(\varepsilon)},\quad j=1,2.
\end{equation}
\end{prop}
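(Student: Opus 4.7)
The idea is to mimic the proof of Proposition \ref{lemma5.1d=2} in this simpler scalar setting. Define $v = v_1 + v_2$, the harmonic function equal to $1$ on $\partial D_1 \cup \partial D_2$ and $0$ on $\partial\Omega$, and let $v^*, v_0^*$ denote the corresponding limit objects on $\widetilde\Omega^*$. Then $u^b = C_2\, v + v_0$ and $u^* = C^* v^* + v_0^*$, and all four of $v, v^*, v_0, v_0^*$ enjoy $\varepsilon$-independent $W^{1,\infty}$ bounds by theorem 1.1 of \cite{llby}. Substituting these into the definitions of $b_j, b_j^*$ yields a splitting of the form
\[
b_j - b_j^* = \bigl[\text{difference of }v_0\text{-fluxes}\bigr] + C_2 \bigl[\text{difference of }v\text{-fluxes}\bigr] + (C_2 - C^*)\int_{\partial D_j^*}\frac{\partial v^*}{\partial\nu}\Big|_+.
\]

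The last piece is handled at once: $|C_2 - C^*| = \tfrac12|C_1 - C_2| \leq C\rho_d(\varepsilon)$ by \eqref{C1-C2b}, while the flux of $v^*$ is bounded because $|\nabla v^*|$ is. For the first two pieces I use the test functions $v_j, v_j^*$ and integration by parts to re-express each integral on $\partial D_j$ or $\partial D_j^*$ as an integral on $\partial\Omega$ of the form $\int_{\partial\Omega}(\text{bounded})\,\frac{\partial(v_j - v_j^*)}{\partial\nu}$ (for instance $\int_{\partial D_j}\frac{\partial v_0}{\partial\nu}\big|_+ = \int_{\widetilde\Omega}\nabla v_0\cdot\nabla v_j = \int_{\partial\Omega}\varphi\,\frac{\partial v_j}{\partial\nu}\big|_+$, and similarly in the limit setting). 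The problem therefore reduces to controlling $\|\nabla(v_j - v_j^*)\|_{L^\infty(\partial\Omega)}$, and analogously $\|\nabla(v - v^*)\|_{L^\infty(\partial\Omega)}$.

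To estimate $v_j - v_j^*$, which is harmonic on $V = \Omega \setminus \overline{D_1 \cup D_1^* \cup D_2 \cup D_2^*}$ and vanishes on $\partial\Omega$, I apply the maximum principle on $V \setminus \mathcal{C}_r$ for a cylinder $\mathcal{C}_r$ of radius $r$ around the origin. On $\partial V \cap \{|x'| \geq r\}$, the mean value theorem combined with the $\varepsilon/2$ vertical shift linking $\partial D_i$ and $\partial D_i^*$ produces boundary values of order $\varepsilon$ away from the origin and of order $\varepsilon/r^2$ nearby, via the pointwise gradient bound $|\nabla v_j^*| \leq C/(\varepsilon + |x'|^2)$. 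On the lateral face $\{|x'| = r\} \cap V$, I sandwich $v_j$ and $v_j^*$ between the auxiliary barriers $\bar u$ and $\bar u^*$ (using \eqref{estimate_w} for the $W^{1,\infty}$ control of $v_j - \bar u$ and the direct computation $|\bar u - \bar u^*|(x) \leq C\varepsilon/(\varepsilon + |x'|^2)$). Combining these with the maximum principle, then applying standard boundary gradient estimates \cite{adn} for the resulting harmonic function near $\partial\Omega$, yields a bound on $\|\nabla(v_j - v_j^*)\|_{L^\infty(\partial\Omega)}$ expressed in terms of $r$.

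The main obstacle is the careful balancing of the cylinder-side and inclusion-boundary contributions, whose optimal choice of $r$ is dimension-dependent and does not simply follow the $\varepsilon^{1/3}$ scaling used in the Lamé case. Picking $r$ so that the competing terms agree produces the advertised rate $C\sqrt{\rho_d(\varepsilon)}$; combining this with the $\rho_d(\varepsilon)$ bound from the third piece of the splitting finishes the proof, since $\rho_d(\varepsilon) \leq \sqrt{\rho_d(\varepsilon)}$ for small $\varepsilon$.
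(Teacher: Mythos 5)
Your proposal is essentially viable, but it is a genuinely different route from the paper's proof of Proposition \ref{lem5.1}. The paper never decomposes $u^{b}$ and $u^{*}$ here: it exploits the fact that both functions take a \emph{single} constant on both inclusions, so $|\nabla u^{b}|,|\nabla u^{*}|\leq C$ uniformly through the neck; hence $\phi=u^{b}-u^{*}$ has boundary values $O(\rho_{d}(\varepsilon))$ on all of $\Gamma_{ij}$ (via \eqref{C*-C2b}) and the maximum principle applies on the whole of $V$, with no cylinder excision. The flux difference is then transferred not to $\partial\Omega$ but to the boundary of the half-domain $\Omega^{+}$, i.e.\ to $\partial\Omega^{+}\cup\gamma$ with $\gamma=\{x_{d}=0\}\cap\Omega$ cutting through the neck; $\int_{\gamma}\partial_{\nu}\phi$ is estimated by splitting $\gamma$ at $|x'|=\sqrt{\rho_{d}(\varepsilon)}$, using $|\nabla\phi|\leq C$ near the origin and interior harmonic estimates plus $|\phi|\leq C\rho_{d}(\varepsilon)$ outside — that balance is the actual source of the rate $\sqrt{\rho_{d}(\varepsilon)}$. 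You instead transplant the Lam\'e argument of Proposition \ref{lemma5.1d=2}: decompose, reduce the $v_{0}$- and $v$-flux differences to $\int_{\partial\Omega}(\mbox{bounded})\,\partial_{\nu}(v_{j}-v_{j}^{*})$, and control $v_{j}-v_{j}^{*}$ by a maximum principle on $V\setminus\mathcal{C}_{r}$ plus boundary gradient estimates. This works (the integration by parts with the singular limit function $v_{j}^{*}$ and the limit-problem analogue of \eqref{estimate_w} need the same justification the paper already accepts in Section \ref{sec3}), and in fact yields $\max\{\varepsilon^{\theta},\rho_{d}(\varepsilon)\}$ with $\theta\in[1/3,1/2]$, which is stronger than the stated bound; the paper's route is shorter precisely because it avoids the singular components altogether, while yours recycles the Lam\'e machinery verbatim.

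Two slips to fix. First, your statement that the cylinder-radius balancing ``produces the advertised rate $C\sqrt{\rho_{d}(\varepsilon)}$'' is not accurate: that balancing involves only powers of $\varepsilon$ (as in the Lam\'e case, $r=\varepsilon^{1/3}$ or $\varepsilon^{1/4}$, giving $\varepsilon^{1/3}$ or $\varepsilon^{1/2}$, both $\ll\sqrt{\rho_{d}(\varepsilon)}$); in your splitting the dominant contribution is the third piece $(C_{2}-C^{*})\int_{\partial D_{j}^{*}}\partial_{\nu}v^{*}=O(\rho_{d}(\varepsilon))$, which already implies the (weaker) claimed bound. Second, $|C_{2}-C^{*}|=\tfrac12|C_{1}-C_{2}|$ is not an identity; you should invoke the estimate $|C_{i}-C^{*}|\leq C\rho_{d}(\varepsilon)$ of Lemma \ref{lem6.2} (equivalently \eqref{C*-C2b}), whose proof compares $v_{1}+v_{2}$ with $v_{1}^{*}$ and $v_{0}$ with $v_{0}^{*}$ — estimates of exactly the type you are establishing anyway, so no circularity arises, but the dependence should be stated. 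Also, for the limit function the gradient bound is $|\nabla v_{j}^{*}|\leq C/|x'|^{2}$ (no $\varepsilon$), and $|\bar u-\bar u^{*}|\lesssim \varepsilon/|x'|^{2}$ in the overlap region; these are what you actually use on $|x'|=r$.
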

To prove Proposition \ref{lem5.1}, we need the following lemma.
\begin{lemma}\label{lem6.2}
 Let $C_{1}$ and $C_{2}$ be defined in \eqref{decom_u2} and $C^{*}$ be in \eqref{equ_u*}. We have
\begin{equation}\label{C*}
\left|\frac{C_{1}+C_{2}}{2}-C^{*}\right|\leq\,C\rho_{d}(\varepsilon).
\end{equation}
As a consequence, combining it with \eqref{C1-C2b}, we have
\begin{equation}\label{C*-C2b}
\left|C_{i}-C^{*}\right|\leq\left|C_{i}-\frac{C_{1}+C_{2}}{2}\right|+\left|\frac{C_{1}+C_{2}}{2}-C^{*}\right|\leq\,C\rho_{d}(\varepsilon),\quad\,i=1,2.
\end{equation}
\end{lemma}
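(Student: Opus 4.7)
The plan is to adapt the three-step strategy of the proof of Lemma \ref{lem3.2} to the simpler scalar setting. First I would return to the unreduced flux-free system obtained from $\int_{\partial D_j}\frac{\partial u}{\partial\nu}|_{+}=0$ with $u$ given by \eqref{decom_u2}:
\begin{equation*}
a_{11}C_1+a_{21}C_2=\tilde{b}_1,\qquad a_{12}C_1+a_{22}C_2=\tilde{b}_2,\qquad \tilde{b}_j:=\int_{\partial D_j}\frac{\partial v_0}{\partial\nu}\Big|_{+}.
\end{equation*}
Symmetrizing each equation about $(C_1+C_2)/2$, using $a_{12}=a_{21}$ (by symmetry of the bilinear form $\int_{\widetilde{\Omega}}\nabla v_i\cdot\nabla v_j$), and adding the two, I arrive at the scalar identity
\begin{equation*}
A\cdot\frac{C_1+C_2}{2}+\frac{a_{11}-a_{22}}{2}(C_1-C_2)=\tilde{b}_1+\tilde{b}_2,\qquad A:=\sum_{i,j=1}^{2}a_{ij}=\int_{\widetilde{\Omega}}|\nabla(v_1+v_2)|^{2}.
\end{equation*}
The analog for the limit problem \eqref{equ_u*} is the single identity $A^{*}C^{*}=\tilde{b}^{*}$, with $A^{*}:=\int_{\widetilde{\Omega}^{*}}|\nabla(v_1^{*}+v_2^{*})|^{2}$ and $\tilde{b}^{*}:=\sum_{j=1,2}\int_{\partial D_j^{*}}\frac{\partial v_0^{*}}{\partial\nu}|_{+}$, where $v_i^{*},v_0^{*}$ are the touching-inclusion analogs of $v_i,v_0$ on $\widetilde{\Omega}^{*}$.

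Given these two identities, \eqref{C*} will follow from three ingredients. (i) A universal lower bound $A^{*}\geq 1/C$: immediate, since $v_1^{*}+v_2^{*}$ is a nontrivial harmonic function (equal to $1$ on $\partial D_1^{*}\cup\partial D_2^{*}$ and $0$ on $\partial\Omega$), so its Dirichlet energy over the fixed domain $\widetilde{\Omega}^{*}$ is bounded below. (ii) The boundedness $|a_{11}-a_{22}|\leq C$: I would use that $\bar{u}+\underline{u}\equiv 1$ on $\Omega_R$, hence $\nabla\bar{u}\equiv-\nabla\underline{u}$ pointwise there; after splitting each $v_i$ into its singular auxiliary plus a bounded corrector, the leading singular contributions to $\int|\nabla v_1|^{2}$ and $\int|\nabla v_2|^{2}$ cancel exactly, and the residue is controlled by \eqref{estimate_w} and \eqref{v_out} (paralleling Step 4 of the proof of Lemma \ref{lem3.2}). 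Combined with $|C_1-C_2|\leq C\rho_d(\varepsilon)$ from \eqref{C1-C2b}, (ii) turns the middle term in the scalar identity into an $O(\rho_d(\varepsilon))$ perturbation. (iii) The quantitative closeness $|A-A^{*}|+|\tilde{b}_1+\tilde{b}_2-\tilde{b}^{*}|\leq C\rho_d(\varepsilon)$, which is the main analytic step.

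The principal obstacle is (iii). Here I would reuse the mean-value/maximum-principle machinery of Step 1.2--Step 2 in the proof of Proposition \ref{lemma5.1d=2}. Set $\phi:=(v_1+v_2)-(v_1^{*}+v_2^{*})$, which is harmonic in the common region $V:=\Omega\setminus\overline{D_1\cup D_1^{*}\cup D_2\cup D_2^{*}}$ and vanishes on $\partial\Omega$. On the narrow symmetric-difference strips $\partial D_i\setminus D_i^{*}$ and $\partial D_i^{*}\setminus D_i$, a mean-value estimate combined with the gradient bound \eqref{nablav12} controls the boundary data of $\phi$; after truncating at a cylinder $\mathcal{C}_{\varepsilon^{\gamma}}$ about the neck, optimizing $\gamma$, and applying the maximum principle on $V\setminus\mathcal{C}_{\varepsilon^{\gamma}}$, I would obtain $\|\phi\|_{L^{\infty}(V)}\leq C\rho_d(\varepsilon)$ up to a universal constant. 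Standard boundary $C^{1}$ estimates at $\partial\Omega$ then upgrade this to $\|\nabla\phi\|_{L^{\infty}(\partial\Omega)}\leq C\rho_d(\varepsilon)$. Since integration by parts expresses $\tilde{b}_1+\tilde{b}_2-\tilde{b}^{*}$ and $A-A^{*}$ as boundary integrals of $\nabla\phi$ against $\varphi$ (plus lower-order contributions from the narrow strips, each of volume $O(\varepsilon)$), both differences are controlled by $C\rho_d(\varepsilon)$. This establishes (iii), and combining (i)--(iii) with the two scalar identities yields \eqref{C*}.
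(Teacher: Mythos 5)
Your overall architecture --- the symmetrized identity for $(C_1+C_2)/2$, the limit identity coming from the flux condition in \eqref{equ_u*}, the bound $|a_{11}-a_{22}|\le C$ via $\underline{u}=1-\bar{u}$ together with \eqref{estimate_w} and \eqref{v_out}, and the use of $|C_1-C_2|\le C\rho_d(\varepsilon)$ from \eqref{C1-C2b} --- is exactly the paper's, and your way of expressing $A-A^{*}$ and $\tilde{b}_1+\tilde{b}_2-\tilde{b}^{*}$ as boundary integrals of $\nabla\phi$ over $\partial\Omega$ is a legitimate alternative to the paper's mid-plane flux splitting. The genuine gap is in your ingredient (iii). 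You estimate the boundary data of $\phi=(v_1+v_2)-(v_1^{*}+v_2^{*})$ on the strips by the mean value theorem combined with the singular bound \eqref{nablav12}, and then run the cylinder-truncation/optimization argument of Step 1.2 of the proof of Proposition \ref{lemma5.1d=2}. That argument balances $\varepsilon^{1-2\gamma}$ against $\varepsilon^{\gamma}$ and can only deliver $\|\phi\|_{L^{\infty}}\le C\varepsilon^{1/3}$ (and only on $V\setminus\mathcal{C}_{\varepsilon^{1/3}}$, not on all of $V$ as you assert), not $C\rho_d(\varepsilon)$. For $d=2$ this is fatal to the stated estimate: $\varepsilon^{1/3}\gg\sqrt{\varepsilon}$, so you would only obtain $\bigl|\tfrac{C_1+C_2}{2}-C^{*}\bigr|\le C\varepsilon^{1/3}$, which is weaker than \eqref{C*}; in $d=3$ the loss happens to be harmless since $\varepsilon^{1/3}\ll 1/|\log\varepsilon|$.

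The missing observation, which is precisely what the paper uses, is that $v_1+v_2$ and its limit (your $v_1^{*}+v_2^{*}$) take the \emph{same} constant value $1$ on both inclusion boundaries, so theorem 1.1 of \cite{llby} gives $|\nabla(v_1+v_2)|\le C$ in $\widetilde{\Omega}$ and $|\nabla(v_1^{*}+v_2^{*})|\le C$ in $\widetilde{\Omega}^{*}$ --- there is no neck singularity for these particular combinations. Then the mean value theorem gives boundary data of size $C\varepsilon$ on all four strips $\Gamma_{ij}$, and the maximum principle on the whole of $V$ (no truncation, no optimization in $\gamma$) yields $\|\phi\|_{L^{\infty}(V)}\le C\varepsilon$; after that, your boundary-gradient estimate on $\partial\Omega$ (or the paper's $\gamma_1\cup\gamma_2\cup\gamma_3$ flux argument) produces the required $O(\rho_d(\varepsilon))$ closeness and \eqref{C*} follows. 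A further caution: defining $v_1^{*}$ and $v_2^{*}$ separately on the touching configuration is awkward, since individually they have unbounded gradients at the contact point and, in $d=2$, infinite Dirichlet energy there; it is cleaner to work directly with the single function equal to $1$ on $\partial D_1^{*}\cup\partial D_2^{*}$, as in \eqref{v1*}.
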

The proof of Lemma \ref{lem6.2} will be given later. We first use it to prove Proposition \ref{lem5.1}.

\begin{proof}[Proof of Proposition \ref{lem5.1}]
Recall $V=\Omega\setminus\overline{D_{1}\cup{D}_{2}\cup{D}_{1}^{*}\cup{D}_{2}^{*}}$, and $\Gamma_{i1}:=\partial{D}_{i}^{*}\setminus\partial{D}_{i}$ and $\Gamma_{i2}:=\partial{D}_{i}\setminus\partial{D}_{i}^{*}$, $i=1,2$. Then 
$\partial{V}=\cup_{i,j=1,2}\Gamma_{ij}\cup\partial\Omega$. Setting
$$\phi(x):={u}^{b}(x)-u^{*}(x),$$
then $\Delta\phi=0$ in $V$. It is easy to see that $\phi=0$ on $\partial\Omega$. On $\Gamma_{11}$, by mean value theorem, \eqref{ub_bdd} and \eqref{C*-C2b}, we have
$$|\phi|\Big|_{\Gamma_{11}}=|u^{b}-u^{*}|\Big|_{\Gamma_{11}}=|C_{2}+|\nabla u^{b}(\xi)|\varepsilon-C^{*}|\leq\,C\rho_{d}(\varepsilon)+C\varepsilon\leq\,C\rho_{d}(\varepsilon),$$
where $\xi\in{D}_{1}^{*}\setminus{D}_{1}$. Similarly,
$$|\phi|\Big|_{\Gamma_{12}}=|u^{b}-u^{*}|\Big|_{\Gamma_{12}}\leq|C_{2}-C^{*}-|\nabla u^{*}(\xi)|\varepsilon|\leq\,C\rho_{d}(\varepsilon)+C\varepsilon\leq\,C\rho_{d}(\varepsilon),$$
for some $\xi\in{D}_{1}\setminus{D}_{1}^{*}$. By the same way,
$$|\phi|\Big|_{\Gamma_{21}\cup\Gamma_{22}}\leq\,C\rho_{d}(\varepsilon).$$
We now apply the maximum principle to $\phi$ on $V$, 
\begin{equation}\label{phi_bdd}
|\phi|\leq\,C\rho_{d}(\varepsilon),\quad\mbox{on}~~V.
\end{equation}

Denote
$$\Omega^{+}:=V\cap\{x\in\Omega~|~x_{d}>0\},\quad\mbox{and}~~\partial\Omega^{+}:=\{x\in\partial\Omega~|~x_{d}>0\},$$
and $\gamma=\{x_{d}=0\}\cap\Omega$. Since ${u}^{b}$ and $u^{*}$ are harmonic in $\Omega^{+}\setminus{D}_{1}$ and $\Omega^{+}\setminus{D}_{1}^{*}$, respectively, by using integration by parts, we have
$$\int_{\partial{D}_{1}}\frac{\partial{u}^{b}}{\partial\nu}=\int_{\partial\Omega^{+}}\frac{\partial{u}^{b}}{\partial\nu}+\int_{\gamma}\frac{\partial{u}^{b}}{\partial\nu},$$
and
$$\int_{\partial{D}_{1}^{*}}\frac{\partial{u}^{*}}{\partial\nu}=\int_{\partial\Omega^{+}}\frac{\partial{u}^{*}}{\partial\nu}+\int_{\gamma}\frac{\partial{u}^{*}}{\partial\nu}.$$
Thus,
$$\int_{\partial{D}_{1}}\frac{\partial{u}^{b}}{\partial\nu}-\int_{\partial{D}_{1}^{*}}\frac{\partial{u}^{*}}{\partial\nu}ds=\int_{\partial\Omega^{+}}\frac{\partial\phi}{\partial\nu}+\int_{\gamma}\frac{\partial\phi}{\partial\nu}.$$

Divide $\gamma$ into three pieces: $\gamma=\gamma_{1}\cup\gamma_{2}\cup\gamma_{3}$, where
$$\gamma_{1}:=\{(x',0)~|~|x'|\leq\,\sqrt{\rho_{d}(\varepsilon)}\},\quad\gamma_{2}:=\{(x',0)~|~\sqrt{\rho_{d}(\varepsilon)}<|x'|<R\},$$
$$\gamma_{3}:=\gamma\setminus(\gamma_{1}\cup\gamma_{2}).$$
Write
$$\int_{\gamma}\frac{\partial\phi}{\partial\nu}=\int_{\gamma_{1}}+\int_{\gamma_{2}}+\int_{\gamma_{3}}\frac{\partial\phi}{\partial\nu}:=\mathrm{I}+\mathrm{II}+\mathrm{III}.$$
First, for $(y',0)\in\gamma_{1}$, since $|\nabla u^{b}|,|\nabla u^{*}|\leq\,C$ in $\Omega_{R}$, so $|\nabla\phi|\leq\,C$ in $\Omega_{R}$. Hence
$$|\mathrm{I}|\leq\,C(\sqrt{\rho_{d}(\varepsilon)})^{d-1}\leq\,C\rho_{d}(\varepsilon)^{(d-1)/2}.$$
For $(y',0)\in\gamma_{2}$, there exists a $r>\frac{1}{C}|y'|^{d}$ for some $C>1$ such that $B_{r}(y',0)\subset{V}$. It then follows from the standard gradient estimates for harmonic function and \eqref{phi_bdd} that
$$|\nabla\phi(y',0)|\leq\frac{C\rho_{d}(\varepsilon)}{|y'|^{d}},$$
and
$$|\mathrm{II}|\leq\,C\rho_{d}(\varepsilon)\int_{\sqrt{\rho_{d}(\varepsilon)}<|y'|<R}\frac{1}{|y'|^{d}}dS\leq\,C\sqrt{\rho_{d}(\varepsilon)}.$$
For $(y',0)\in\gamma_{3}$, there is a universal constant $r>0$ such that $B_{r}(x)\subset{V}$ for all $x\in\gamma_{3}$. So we have from \eqref{phi_bdd} that for any $x\in\gamma_{3}$,
$$|\nabla\varphi|\leq\frac{C\varepsilon}{r}\leq\,C\varepsilon,$$
and
$$|\mathrm{III}|\leq\,C\varepsilon.$$
Finally, using the standard boundary gradient estimates for $\phi$ and \eqref{phi_bdd}, we have
$$\Big|\int_{\partial\Omega^{+}}\frac{\partial\phi}{\partial\nu}\Big|\leq\,C\rho_{d}(\varepsilon).$$
Thus, we have \eqref{difference_alpha}. The proof is completed.
\end{proof}

\begin{remark}
From the proof of Proposition \ref{lem5.1}, one can see that the convergence of $b_{j}$ depends on the estimates \eqref{C1-C2b}, $|C_{1}-C_{2}|\leq\,C\rho_{d}(\varepsilon)\rightarrow0$. However, in higher dimensions $d\geq4$, we still do not have such closeness of $C_{1}$ and $C_{2}$. However, from \eqref{C1-C2b0}, it is not difficult to find a boundary data $\varphi$ such that $|b_{1}[\varphi]|\geq\frac{1}{C}$ for some universal constant $C$, although $b_{1}^{*}[\varphi]$ is not necessarily its limit. Thus, we also can have a lower bound estimate, $|\nabla u(x)|_{\overline{P_{1}P_{2}}}\geq\frac{1}{C\varepsilon}$. 
\end{remark}

\begin{proof}[Proof of Lemma \ref{lem6.2}]
We decompose $u^{*}$ into
$$u^{*}=C^{*}v_{1}^{*}+v_{0}^{*},\quad\mbox{in}~\widetilde{\Omega}^{*},$$
where $v_{1}^{*},v_{0}^{*}\in{C}^{1}(\overline{\widetilde{\Omega}})$ are, respectively, the solutions of
\begin{equation}\label{v1*}
\begin{cases}
\Delta v_{1}^{*}=0,&\mbox{in}~\widetilde{\Omega}^{*},\\
v_{1}^{*}=1,&\mbox{on}~\partial{D}_{1}^{*}\cup\partial{D}_{2}^{*},\\
v_{1}^{*}=0,&\mbox{on}~\partial\Omega,
\end{cases}
\end{equation}
and
\begin{equation}\label{v0*}
\begin{cases}
\Delta v_{0}^{*}=0,&\mbox{in}~\widetilde{\Omega}^{*},\\
v_{0}^{*}=0,&\mbox{on}~\partial{D}_{1}^{*}\cup\partial{D}_{2}^{*},\\
v_{0}^{*}=\varphi,&\mbox{on}~\partial{\Omega}.
\end{cases}
\end{equation}
From the third line of \eqref{equ_u*}, we have
\begin{equation}\label{equ_C*}
C^{*}\left(\int_{\partial{D}_{1}^{*}}\frac{\partial{v}_{1}^{*}}{\partial\nu}\Big|_{+}+\int_{\partial{D}_{2}^{*}}\frac{\partial{v}_{1}^{*}}{\partial\nu}\Big|_{+}\right)+\left(\int_{\partial{D}_{1}^{*}}\frac{\partial{v}_{0}^{*}}{\partial\nu}\Big|_{+}+\int_{\partial{D}_{2}^{*}}\frac{\partial{v}_{0}^{*}}{\partial\nu}\Big|_{+}\right)
=0
\end{equation}
Let $v_{1},v_{2}$ and $v_{0}$ be defined in \eqref{v12} and \eqref{v0}. We claim that
\begin{equation}\label{convergence_v12}
\left|\int_{\partial{D}_{i}}\frac{\partial({v}_{1}+v_{2})}{\partial\nu}\Big|_{+}-\int_{\partial{D}_{i}^{*}}\frac{\partial{v}_{1}^{*}}{\partial\nu}\Big|_{+}\right|\leq\,C\rho_{d}(\varepsilon),\quad\,i=1,2,
\end{equation}
and
\begin{equation}\label{convergence_v0}
\left|\int_{\partial{D}_{i}}\frac{\partial{v}_{0}}{\partial\nu}\Big|_{+}-\int_{\partial{D}_{i}^{*}}\frac{\partial{v}_{0}^{*}}{\partial\nu}\Big|_{+}\right|\leq\,C\rho_{d}(\varepsilon),\quad\,i=1,2.
\end{equation}

As in the proof of Proposition \ref{lem5.1}, letting
$$\phi_{1}:=(v_{1}+v_{2})-v^{*1},$$
then $\Delta\phi_{1}=0$ in $V$, and $\phi_{1}=0$ on $\partial\Omega$.
Since $v_{1}+v_{2}$ satisfies
\begin{equation}\label{v1+v2}
\begin{cases}
\Delta (v_{1}+v_{2})=0,&\mbox{in}~\widetilde{\Omega},\\
v_{1}+v_{2}=1,&\mbox{on}~\partial{D}_{1}\cup\partial{D}_{2},\\
v_{1}+v_{2}=0,&\mbox{on}~\partial{\Omega},
\end{cases}
\end{equation}
it follows from theorem 1.1 in \cite{llby} that
\begin{equation}\label{nabla(v_{1}+v_{2})}
|\nabla(v_{1}+v_{2})|\leq\,C,\quad\mbox{in}~~\widetilde{\Omega},
\end{equation}
since the potential takes the same constant value on boundaries of both partials. Because of the same reason,
\begin{equation}\label{nabla{v}^{*1}}
|\nabla{v}_{1}^{*}|\leq\,C,\quad\mbox{in}~~\widetilde{\Omega}^{*}.
\end{equation}
Thus, on $\Gamma_{11}$, by using mean value theorem and \eqref{nabla(v_{1}+v_{2})}, we have
\begin{align*}
|\phi_{1}|\Big|_{\Gamma_{11}}&=|(v_{1}+v_{2})-v_{1}^{*}|\Big|_{\Gamma_{11}}
=|(v_{1}+v_{2})-1|\Big|_{\Gamma_{11}}\\
&=|(v_{1}+v_{2})-(v_{1}+v_{2})(x',x_{d}+\varepsilon)|\Big|_{\Gamma_{11}}\\
&\leq|\nabla (v_{1}+v_{2})(\xi)|\varepsilon\leq\,C\varepsilon,
\end{align*}
for some $\xi\in\widetilde{\Omega}$; similarly, using \eqref{nabla{v}^{*1}},
\begin{align*}
|\phi_{1}|\Big|_{\Gamma_{12}}&=|(v_{1}+v_{2})-v_{1}^{*}|\Big|_{\Gamma_{12}}=|1-v_{1}^{*}|\Big|_{\Gamma_{12}}\\
&=|v_{1}^{*}(x',x_{d}-\varepsilon)-v_{1}^{*}|\Big|_{\Gamma_{12}}=|\nabla v_{1}^{*}(\xi)|\varepsilon\leq\,C\varepsilon,
\end{align*}
for some another $\xi\in\widetilde{\Omega}^{*}$. By the same way,
$$|\phi_{1}|\Big|_{\Gamma_{21}\cup\Gamma_{22}}\leq\,C\varepsilon.
$$
We now apply the maximum principle to $\phi_{1}$ on $V$, instead of \eqref{phi_bdd}, we have
\begin{equation}\label{phi1_bdd}
|\phi_{1}|\leq\,C\varepsilon,\quad\mbox{on}~~V.
\end{equation}
Therefore, by the same process as in the rest of the proof of Proposition \ref{lem5.1}, \eqref{convergence_v12} for $i=1$ is proved. The proofs of claim \eqref{convergence_v12} for $i=2$ and \eqref{convergence_v0} are similar.

In view of the decomposition \eqref{decom_u2}, the forth line of \eqref{mainequation2}, we have
\begin{align}\label{system_C1C2}
C_{1}\int_{\partial{D}_{j}}\frac{\partial{v}_{1}}{\partial\nu}\Big|_{+}
+C_{2}\int_{\partial{D}_{j}}\frac{\partial{v}_{2}}{\partial\nu}\Big|_{+}+\int_{\partial{D}_{j}}\frac{\partial{v}^{0}}{\partial\nu}\Big|_{+}=0,\quad\,j=1,2.
\end{align}
That is,
$$\begin{cases}
a_{11}C_{1}+a_{12}C_{2}=\tilde{b}_{1},\\
a_{21}C_{1}+a_{22}C_{2}=\tilde{b}_{2}.
\end{cases}$$
So that
$$(a_{11}+a_{21})C_{1}+(a_{12}+a_{22})C_{2}+\tilde{b}_{1}+\tilde{b}_{2}=0.$$
Since $a_{12}=a_{21}$, it follows that
$$(a_{11}+a_{21})(C_{1}+C_{2})+(a_{22}-a_{11})C_{2}+\tilde{b}_{1}+\tilde{b}_{2}=0.$$
Similarly,
$$(a_{12}+a_{22})(C_{1}+C_{2})-(a_{22}-a_{11})C_{1}+\tilde{b}_{1}+\tilde{b}_{2}=0.$$
Adding these two equations together and dividing it by two yields
\begin{equation*}
(a_{11}+a_{21}+a_{12}+a_{22})\frac{(C_{1}+C_{2})}{2}+(a_{22}-a_{11})\frac{(C_{2}-C_{1})}{2}+\tilde{b}_{1}+\tilde{b}_{2}=0.
\end{equation*}
That is,
\begin{align}\label{C1+C2/2}
&\left(\int_{\partial{D}_{1}}\frac{\partial({v}_{1}+v_{2})}{\partial\nu}\Big|_{+}+\int_{\partial{D}_{2}}\frac{\partial({v}_{1}+v_{2})}{\partial\nu}\Big|_{+}\right)\frac{(C_{1}+C_{2})}{2}\nonumber\\
&+\left(\int_{\partial{D}_{1}}\frac{\partial{v}_{0}}{\partial\nu}\Big|_{+}+\int_{\partial{D}_{2}}\frac{\partial{v}_{0}}{\partial\nu}\Big|_{+}\right)+\left(\int_{\widetilde{\Omega}}|\nabla{v}_{2}|^{2}-\int_{\widetilde{\Omega}}|\nabla{v}_{1}|^{2}\right)\frac{(C_{2}-C_{1})}{2}=0.
\end{align}
Recalling $\underline{u}=1-\bar{u}$ in $\Omega_{R}$ and using  estimates \eqref{estimate_w} and \eqref{v_out} leads to
\begin{align*}
&\left|\int_{\widetilde{\Omega}}|\nabla{v}_{2}|^{2}-\int_{\widetilde{\Omega}}|\nabla{v}_{1}|^{2}\right|\\
&\leq\left|\int_{\Omega_{R}}|\nabla{v}_{2}|^{2}-\int_{\Omega_{R}}|\nabla{v}_{1}|^{2}\right|+\left|\int_{\widetilde{\Omega}\setminus\Omega_{R}}|\nabla{v}_{2}|^{2}-\int_{\widetilde{\Omega}\setminus\Omega_{R}}|\nabla{v}_{1}|^{2}\right|\\
&\leq\left|\int_{\Omega_{R}}|\nabla\underline{u}|^{2}-\int_{\Omega_{R}}|\nabla\bar{u}|^{2}\right|+\left|\int_{\Omega_{R}}|\nabla({v}_{2}-\underline{u})|^{2}-\int_{\Omega_{R}}|\nabla({v}_{1}-\bar{u})|^{2}\right|+C\\
&\leq\,C.
\end{align*}
By using the integration by parts and the definition of $v_{1}^{*}$, we have
$$\int_{\partial{D}_{1}^{*}}\frac{\partial{v}_{1}^{*}}{\partial\nu}\Big|_{+}+\int_{\partial{D}_{2}^{*}}\frac{\partial{v}_{1}^{*}}{\partial\nu}\Big|_{+}=\int_{\widetilde{\Omega}^{*}}|\nabla v_{1}^{*}|^{2}>0.$$Therefore, by the claim above, \eqref{C1+C2/2} can be written as
\begin{align*}
&\left(\int_{\partial{D}_{1}^{*}}\frac{\partial{v}_{1}^{*}}{\partial\nu}\Big|_{+}+\int_{\partial{D}_{2}^{*}}\frac{\partial{v}_{1}^{*}}{\partial\nu}\Big|_{+}+O(\rho_{d}(\varepsilon))\right)\frac{(C_{1}+C_{2})}{2}\nonumber\\
&+\left(\int_{\partial{D}_{1}^{*}}\frac{\partial{v}_{0}^{*}}{\partial\nu}\Big|_{+}+\int_{\partial{D}_{2}^{*}}\frac{\partial{v}_{0}^{*}}{\partial\nu}\Big|_{+}+O(\rho_{d}(\varepsilon))\right)+O(\rho_{d}(\varepsilon))=0.
\end{align*}
Comparing it with \eqref{equ_C*}, the proof of \eqref{C*} is finished.
\end{proof}

\noindent{\bf{\large Acknowledgements.}}
The author is greatly indebted to Professor YanYan Li and JiGuang Bao for their constant encouragement and support. The author was partially supported by  NSFC (11571042, 11631002),  Fok Ying Tung Education Foundation (151003).

\vspace{5mm}


\bibliographystyle{amsplain}
\bibliography{References}

\end{document}